\documentclass[12pt]{amsart}
\usepackage{amsmath,amsfonts,amssymb,amsthm,amstext,pgf,graphicx,hyperref,verbatim,ytableau,textcomp,color,young,youngtab,tikz,tikz-cd,bbold,mathabx}
\usepackage{cleveref}
\usepackage[mathscr]{euscript}
\definecolor{asparagus}{rgb}{0.0, 0.5, 0.0}
\theoremstyle{plain}
\newtheorem{thm}{Theorem}[section]
\newtheorem{lem}[thm]{Lemma}
\newtheorem{prop}[thm]{Proposition}
\newtheorem{cor}[thm]{Corollary}
\newtheorem{rem}[thm]{Remark}

\newtheorem{exmp}[thm]{Example}

\theoremstyle{definition}
\newtheorem{defn}{Definition}[section]
\newtheorem{conj}{Conjecture}[section]
\newcommand{\f}{\mathfrak{f}}
\newcommand{\g}{\mathfrak{g}}
\newcommand{\h}{\mathfrak{h}}
\DeclareMathOperator{\ST}{Star_n}
\DeclareMathOperator{\Gid}{id_G}

\DeclareMathOperator{\rel}{rel}
\DeclareMathOperator{\cov}{Cov}
\DeclareMathOperator{\var}{Var}
\DeclareMathOperator{\trivial}{triv}
\DeclareMathOperator{\Tr}{trace}
\DeclareMathOperator{\Par}{Par}

\DeclareMathOperator{\std}{Std}

\title{}
\begin{document}
\title[Noise Sensitivity Governed by Continuous-Time Random Walks on $S_n$]{Noise Sensitivity Governed by Continuous-Time Random Walks on the Symmetric Group}
%\title[Noise Sensitivity of Long Cycles on the Discrete $d$-Torus]{Noise Sensitivity of Long Cycles in the Interchange Process on the discrete $d$-Torus}
\author{Gideon Amir}
\address[Gideon Amir]{Department of Mathematics, Bar-Ilan University, Ramat-Gan 5290002, Israel.}
\email{gidi.amir@gmail.com}
\author{Subhajit Ghosh}
\address[Subhajit Ghosh]{Department of Mathematics, Indian Institute of Technology Madras, Chennai-600 036, India.}
\email{gsubhajit@alum.iisc.ac.in}
\keywords{noise sensitivity, interchange process, symmetric group, Gelfand-Tsetlin basis}
\makeatletter
\@namedef{subjclassname@2020}{%
	\textup{2020} Mathematics Subject Classification}
\makeatother
\subjclass[2020]{60K35, 06E30, 94D05.}
%----------Abstract--------------
\begin{abstract}
	We study the noise sensitivity of Boolean functions on the symmetric group, where noise is induced by running a Markov chain on the symmetric group $S_n$, focusing in particular on the case where the underlying chain is an interchange process on the complete graph $K_n$, the $d$-dimensional discrete torus or the star graph. We prove comparison results between these noise sources. We also show that the indicator of long cycles is noise-sensitive under the interchange process on each of the aforementioned graphs. In addition, we study the noise sensitivity of several fundamental functions such as the parity function and analogues of the dictator function. Furthermore, using the fact that the interchange process on the complete graph is the continuous-time random walk generated by all transpositions, we prove that noise sensitivity remains unchanged when the noise source is switched from the continuous-time random walk generated by all transpositions to that generated by all $s$-cycles ($s$ is even and $2<s\ll n$).
\end{abstract}
\maketitle
\section{Introduction}\label{intro}
In many scientific disciplines involving random or non-random processes, a system takes an input and returns an output. A natural question then arises: How much do small changes in the input affect the output? This question is one of the key concerns for modern-day scientists (see \cite{K_surcvey_on_NS_of_universe} and references therein). It is mathematically modeled and extensively studied within the framework of noise sensitivity. 

Noise sensitivity has been extensively studied across diverse fields, including probability, combinatorics, mathematical physics, theoretical computer science, and social choice theory (see \cite{Kalai_introduction1,MOO,SS,ST,T_introduction,TV_introduction} for details). The concept applies to both classical and quantum stochastic models (cf. \cite{T_classical_and_quantum}). Following the seminal work of Benjamini, Kalai, and Schramm \cite{BKS} in the late twentieth century, the theory of noise sensitivity developed into a well-defined and significant area of study in probability theory and statistical physics. In their work, they analyzed the noise sensitivity of Boolean functions defined on the \emph{Hamming cube}, $\{0,1\}^n$. They applied the theory to problems in bond percolation -- demonstrating, in particular, that the crossing event for bond percolation on a rectangular grid is noise sensitive.

A sequence of Boolean functions defined on the vertices of the \emph{Hamming cube}, i.e., $\{0,1\}^n$, is said to be noise sensitive if it asymptotically loses correlation when a positive fraction of the input bits are affected by some noise. Here, noising a bit means refreshing it with some positive probability. Formally, Benjamini, Kalai, and Schramm's definition is as follows:
\begin{defn}\label{def:BKS_NS+NStable}
	Let $n\geq1$ be a positive integer, and $\f_n:\{0,1\}^n\rightarrow\{0,1\}$ be a Boolean function indexed by $n$. Also, let $X_1,\dots,X_n$ be independent Bernoulli$(\frac{1}{2})$ random variables. Given $0<\varepsilon<1$, the sequence of Boolean functions $\{\f_n\}_n$ is asymptotically 
	\begin{itemize}
		\item \emph{noise sensitive} if \quad
		$\displaystyle\lim_{n\rightarrow\infty}\cov(\f_n(X_1,\dots,X_n),\f_n(X_1(\varepsilon),\dots,X_n(\varepsilon)))=0,\quad\text{ and }$
		\item \emph{noise stable} if \quad
		$\displaystyle\lim_{\varepsilon\rightarrow0}\sup_n\mathbb{P}(\f_n(X_1,\dots,X_n)\neq \f_n(X_1(\varepsilon),\dots,X_n(\varepsilon)))=0,$
	\end{itemize}
	where, 
	\[X_i(\varepsilon)=\begin{cases}
		X_i&\text{ with probability }1-\varepsilon,\\
		1-X_i&\text{ with probability }\varepsilon,
	\end{cases}\text{ for all }1\leq i\leq n.\]
\end{defn}
The functions in the sequence $\{\f_n\}_n$ are defined on $\{0,1\}^n$, where inputs are uniformly chosen vertices of the Hamming cube. The noise source can be modeled by a continuous-time nearest-neighbor random walk on the hypercube. A single step of this random walk corresponds to flipping one input bit. More formally, there are Poisson clocks (alarm clocks that ring at times distributed according to the exponential distribution) at each coordinate, and these clocks ring independently with rate $1$. The process refreshes a coordinate whenever the corresponding clock rings; the refreshed bit is equally likely to be $0$ or $1$. The perturbed input is then the position of the aforementioned continuous-time random walk at time $\alpha$, where $\varepsilon=\frac{1}{2}(1-e^{-\alpha})$. This motivates the definition of noise sensitivity for a function on the state space of a Markov chain. Informally, it means that the function decorrelates faster than the relaxation time of the entire system. The formal definition is given below.
 \begin{defn}\label{def:Spectral_NS+NStable}
 	Let $\{(X_n(t),\Omega_n,\pi_n)\}_n$ be a sequence of reversible and irreducible continuous-time Markov chains. Here $\Omega_n$ is the state space, and $\pi_n$ is the stationary distribution of the $n$th chain $X_n(t)$. Also, let $t_{\rel}^n$ be the \emph{relaxation time} (will be defined) of the $n$th chain $X_n(t)$. A sequence $\{\f_n\}_n$ of Boolean functions,
 	$\f_n : \Omega_n \longrightarrow \{0, 1\}$, is said to be
 	\begin{itemize}
 		\item \emph{Noise sensitive} with respect to $(X_n(t),\Omega_n,\pi_n)$ if for all $\alpha>0$,
 		\begin{equation}\label{eq:Spectral_NS-def}
 			\lim_{n\rightarrow\infty}\cov\left(\f_n(X_n(0)),\;\f_n(X_n(\alpha t^n_{\rel}))\right)=0,\text{ where }X_n(0)\sim \pi_n.
 		\end{equation}
 		\item \emph{Noise stable} with respect to $(X_n(t),\Omega_n,\pi_n)$ if
 		\begin{equation}\label{eq:Spectral_NStable-def}
 			\lim_{\alpha\rightarrow0}\sup_n\mathbb{P}\left(\f_n(X_n(0))\neq \f_n(X_n(\alpha t^n_{\rel}))\right)=0,\text{ where }X_n(0)\sim \pi_n.
 		\end{equation}
 	\end{itemize}
 \end{defn}
Both \Cref{def:BKS_NS+NStable} and \Cref{def:Spectral_NS+NStable} coincide, as the relaxation time for the continuous-time nearest-neighbor random walk on the Hamming cube is $1$. This notion of noise sensitivity and stability, where the noise is governed by a continuous-time Markov chain, first arose in the work of Broman et al. \cite{BGS} and Forsstr\"{o}m \cite{Forsstrom_EJP}. Both Broman et al. \cite{BGS} and Forsstr\"{o}m \cite{Forsstrom_EJP} considered the exclusion process on a graph and referred to it as \emph{exclusion sensitivity} and \emph{exclusion stability}. The general definition, \Cref{def:Spectral_NS+NStable}, can be found in \cite[Definition 1.1 and Definition 1.2]{Forsstrom_arXiv}.
 
 The idea is that working with very general chains is too broad to say much, so we decide to focus on chains on the symmetric group, where a rich representation theory exists. In this paper, we work with \Cref{def:Spectral_NS+NStable}, and our noise sources will be \emph{interchange processes} (which will be explained in the next paragraph) on certain graphs. More precisely, the continuous-time Markov chains described in \Cref{def:Spectral_NS+NStable} correspond to continuous-time random walks on the symmetric group. %We now proceed to define the interchange process.
  
  Let $\mathscr{G}=(V,E)$ be any graph with vertex set $V=[n]:=\{1,\dots,n\}$. The \emph{weighted interchange process} on $\mathscr{G}$ is defined as follows: Suppose the edges of the graph $\mathscr{G}$ are equipped with Poisson clocks, with all clocks being independent. Let the edge $\{u,v\}\in E$ be equipped with a Poisson clock of rate $x_{u,v}$. Now place $n$ distinct marbles on the vertices of the graph. Whenever the clock at $\{u,v\}$ rings, exchange the marbles at vertices $u$ and $v$. Thus, each marble performs a continuous-time random walk on the graph, though the walks of different marbles are dependent. The process can be viewed as a continuous-time random walk on the symmetric group $S_n$, since the positions of the marbles at any given time correspond to a permutation of their original positions. \emph{We simply call it the interchange process when all nonzero weights $x_{u,v}$ for $\{u,v\} \in E$ are equal.} 
  %%%%%%%%%%-------K_6---ST_6------%%%%%%%%%%%%
  \begin{figure}[ht]
  	\centering
  	\begin{tikzpicture}[scale=0.96,
  		vertex/.style={circle,fill=black,inner sep=2.5pt}]
  		%--------------------------------------------------
  		% Complete graph K_6
  		%--------------------------------------------------
  		\begin{scope}[xshift=0cm]
  			\node[vertex,label=above:{$1$}] (v1) at (90:2) {};
  			\node[vertex,label=above:{$2$}] (v2) at (30:2) {};
  			\node[vertex,label=right:{$3$}] (v3) at (-30:2) {};
  			\node[vertex,label=below:{$4$}] (v4) at (-90:2) {};
  			\node[vertex,label=below:{$5$}] (v5) at (-150:2) {};
  			\node[vertex,label=left:{$6$}] (v6) at (150:2) {};
  			\foreach \i in {1,...,6}
  			{
  				\foreach \j in {\i,...,6}
  				{
  					\ifnum\i<\j
  					\draw (v\i)--(v\j);
  					\fi
  				}
  			}
  		\end{scope}
  		%--------------------------------------------------
  		% Star graph ST_6
  		%--------------------------------------------------
  		\begin{scope}[xshift=8cm]
  			\node[vertex,label=left:{$6$}] (c) at (0,0) {};
  			\node[vertex,label=above:{$1$}] (s1) at (-1.4,1.6) {};
  			\node[vertex,label=above:{$2$}] (s2) at (1.4,1.6) {};
  			\node[vertex,label=right:{$3$}] (s3) at (2.0,-0.6) {};
  			\node[vertex,label=below:{$4$}] (s4) at (0.5,-2.0) {};
  			\node[vertex,label=left:{$5$}] (s5) at (-1.8,-1.0) {};
  			\foreach \x in {s1,s2,s3,s4,s5}
  			\draw (c)--(\x);
  		\end{scope}
  	\end{tikzpicture}
  	\caption{$K_6$ (left) and $\text{Star}_6$ (right).}\label{fig:K6ST6}
  \end{figure}
  %%%%%%%%%%%%%%%%%%%%%%%%%%%%%%%%%%
  We first establish several comparison results for noise sensitivity when the noise is generated by interchange processes on different graphs. We begin with the mean-field (highly homogeneous) setting corresponding to the interchange process on the complete graph on $n$ vertices. Recall that the complete graph on $n$ vertices, denoted by $K_n$, is the undirected graph with vertex set $[n]$ and edge set $\{\{i,j\}: 1 \leq i < j \leq n\}$ (an example given in Figure \ref{fig:K6ST6} (left)). We then relate the mean-field setting to the interchange process on the star graph, which is highly inhomogeneous. The star graph on $n$ vertices, denoted by $\ST$, is the undirected graph with vertex set $[n]$ and edge set $\{\{i,n\}: 1 \leq i < n\}$ (an example given in Figure \ref{fig:K6ST6} (right)). Our first comparison theorem is as follows:
  \begin{thm}\label{thm:star<->complete}
  	For every $n\geq 1$, let $\f_n:S_n\rightarrow \{0,1\}$ be a Boolean function. Then,
  	\begin{enumerate}
  		\item The sequence $\{\f_{n}\}_{n=1}^{\infty}$ is noise sensitive with respect to the interchange process on $K_{n}$ implies that it is noise sensitive with respect to the interchange process on $\ST$.
  		\item The sequence $\{\f_{n}\}_{n=1}^{\infty}$ is noise stable with respect to the interchange process on $\ST$ implies that it is noise stable with respect to the interchange process on $K_{n}$.
  	\end{enumerate}
  	The \emph{dictator function} given in Example \ref{ex:dictator} ensures that the converses are not true.
  \end{thm}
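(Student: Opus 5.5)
We would prove both parts through the Fourier (representation-theoretic) decomposition of $\f_n$ over $S_n$, together with a comparison of the two generators on each isotypic component. Write $L_K=\sum_{i<j}(\mathbb{1}-(i\,j))$ and $L_S=\sum_{i<n}(\mathbb{1}-(i\,n))$ for the generators, acting by right convolution on $L^2(S_n)$. The rates are normalized so that $t_{\rel}$ is the inverse spectral gap.

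The spectral data are classical. For the complete graph, $L_K$ is central and acts on the $\lambda$-isotypic component as the scalar $\binom n2(1-\chi_\lambda(\tau)/d_\lambda)$. Its spectral gap is $n$ (attained at $\lambda=(n-1,1)$), so $t_{\rel}^K=1/n$. For the star, the Jucys--Murphy element $J_n=\sum_{i<n}(i\,n)$ acts diagonally in the Gelfand--Tsetlin basis, with eigenvalue the content of the box removed from $T$. Hence $L_S=(n-1)-J_n$ has eigenvalue $n-1-c(T)$ on the basis vector $v_T$, and its spectral gap is $1$, so $t_{\rel}^S=1$.

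The key step is to show that the covariance is monotone in a suitable sense. Expand $\f_n-\mathbb{E}\f_n=\sum_T \hat f_T$ in the Gelfand--Tsetlin basis; trivial-isotypic components do not contribute to the covariance. Then
\[
\cov(\f_n(X_0),\f_n(X_{t}))=\sum_{\lambda\neq(n)}\sum_{T}e^{-t\,\mu(T)}\|\hat f_{T}\|^2
\]
for both chains, since both generators are diagonal in the Gelfand--Tsetlin basis. The $K_n$ eigenvalue is $\mu_K(\lambda)=\binom n2-\sum_{b\in\lambda}c(b)$, using the content sum formula for the central character; the star eigenvalue is $\mu_S(T)=n-1-c(T)$. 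Taking $t=\alpha t_{\rel}$, we need the pointwise eigenvalue comparison
\[
\mu_S(T)\ \ge\ \mu_K(\lambda)/n \quad\text{for all }\lambda\neq(n),
\]
possibly up to a constant. With this inequality, $e^{-\alpha\mu_S(T)}\le e^{-\alpha\mu_K(\lambda)/n}$ term by term, so $\cov_S(\alpha)\le\cov_K(\alpha)$. Part (1) then follows immediately. For part (2), write $\mathbb{P}(\f(X_0)\ne\f(X_t))=2(\var\f-\cov(t))$. The comparison gives $\cov_S(\alpha)\le\cov_K(\alpha)$, so the disagreement probability under $K_n$ at time $\alpha t_{\rel}^K$ is at most that under the star at the same $\alpha$, and stability transfers.

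The main obstacle is that the pointwise inequality may fail as stated. The star eigenvalue depends on the particular box removed, while $\mu_K(\lambda)/n$ is the average over boxes (roughly $\mu_K=\sum_b (n-1-c(b))/2$ weighted by boxes). In particular, a removable box with large content can make $\mu_S$ small. If so, we would instead compare through restriction to $S_{n-1}$. Decompose by $\lambda$ and use that $\mathbb{E}_T[\mu_S(T)]$ over the isotypic component equals $\frac{2}{n}\mu_K(\lambda)$, which follows from $\operatorname{tr}J_n$ on $V_\lambda$. Then apply Jensen/convexity in the form $\sum_T e^{-\alpha\mu_S(T)}$, combined with splitting Fourier mass into small-content and large-content parts. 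If even this fails, we would fall back on a Dirichlet-form comparison (Diaconis--Saloff-Coste), $\mathcal{E}_K\le C n\,\mathcal{E}_S$, giving $\|e^{-tL_S}h\|^2$ versus $e^{-tL_K/n}$ bounds through spectral measure domination on each isotypic block.

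For the failure of the converses, Example~\ref{ex:dictator} should be checked directly. Take $\f(\sigma)=\mathbb{1}\{\sigma(n)=n\}$ or a similar indicator. Under the star, the marble at the center moves at rate $n-1$, so $\f$ decorrelates within time $O(1/n)\ll t_{\rel}^S=1$. It is therefore sensitive for the star but not for $K_n$, where the marble moves at rate $n-1$ against $t_{\rel}^K=1/n$, which is order one. A similar computation shows it is stable for $K_n$ but not for the star.
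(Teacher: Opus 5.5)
\textbf{The decisive step is missing.} Your route differs from the paper's and is viable, but you leave its only nontrivial step unproved and even suggest it may fail. Since $L_K$ is scalar on each isotypic block and $L_S$ is diagonal on the spaces $\operatorname{span}\{\psi^{ST}:S\in\std(\lambda)\}$, Lemma \ref{lem:cov-ns} writes both covariances with the same weights $\|\hat f_T\|^2$. Everything then rests on $\mu_S(T)\ge\mu_K(\lambda)/n$.

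This inequality is true, with constant $1$, for every $n$, every $\lambda\vdash n$ and every $T\in\std(\lambda)$:
\begin{itemize}
\item The box containing $n$ sits in some row $i$ and column $j\le\lambda_1$. Hence $c(b_T(n))=j-i\le\lambda_1-1$, so $\mu_S(T)\ge n-\lambda_1$.
\item Among partitions with first row $\lambda_1$, $\mathrm{Diag}$ is minimized by the hook $(\lambda_1,1^{n-\lambda_1})$. Indeed, the boxes below row one form a partition $\mu\vdash n-\lambda_1$ contributing $\mathrm{Diag}(\mu)-|\mu|\ge-\binom{|\mu|+1}{2}$.
\item Therefore $\mathrm{Diag}(\lambda)\ge\binom{\lambda_1}{2}-\binom{n-\lambda_1+1}{2}=\binom n2-n(n-\lambda_1)$, which gives $\mu_K(\lambda)\le n(n-\lambda_1)\le n\,\mu_S(T)$.
\end{itemize}
The relevant quantity is $\min_T\mu_S(T)$, and it is governed by $\lambda_1$ exactly as $\mu_K(\lambda)$ is.

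\textbf{Your fallbacks would not close the gap as stated.} Jensen bounds $\frac{1}{f^\lambda}\sum_T e^{-\alpha\mu_S(T)}$ from below, which is the wrong direction. More fundamentally, no averaging over $T$ can work, because Fourier mass inside a block need not be spread uniformly over $T$. For the function of Example \ref{ex:deg1_fn}, almost all of the $(n-1,1)$-mass sits on tableaux with $\mu_S(T)=1$. The Dirichlet-form comparison $\mathcal{E}_K\le Cn\,\mathcal{E}_S$ is, by the block structure, literally the same pointwise inequality up to the constant $C$. That constant is harmless, since both definitions are insensitive to rescaling $\alpha$. A Diaconis--Saloff-Coste path argument via $(i\,j)=(i\,n)(j\,n)(i\,n)$ would supply it, but it has to be carried out.

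\textbf{Comparison with the paper.} Once the inequality is in place, your argument is complete and in fact sharper than the paper's. Write $C_K(\alpha)$ and $C_S(\alpha)$ for the covariances at times $\alpha t^{K}_{\rel}$ and $\alpha t^{S}_{\rel}$. You get $C_S(\alpha)\le C_K(\alpha)$ for every $n$ and every $\alpha$, and both parts follow from $\mathbb{P}(\f(X_0)\neq\f(X_t))=2(\var\f-\cov)$. The paper uses the same combinatorial fact only asymptotically: for large $n$, $\lambda\notin\Lambda_k\cup\{(n)\}$ forces $\lambda_1\le n-k-1$ and hence $c(b_T(n))\le n-k-2$. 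It then feeds this into Forsstr\"om's spectral criteria (Lemmas \ref{lem:spectral_NS} and \ref{lem:spectral_NStable}). Your version avoids the decomposition \eqref{eq:RT-partition_range-decomposition} and its ``sufficiently large $n$'' caveat.

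\textbf{The witness for the converses is wrong as written.} The function $\mathbb{1}\{\sigma(n)=n\}$ has variance $\frac1n(1-\frac1n)\to0$. It is therefore degenerate, hence both sensitive and stable for every chain (Remark \ref{rem:degeneracy}), and cannot separate the notions. You need the non-degenerate $\phi_n=\mathbb{1}\{\pi(n)\le\varepsilon n\}$ of Example \ref{ex:dictator}. There your heuristic can be made exact. All of its non-constant Fourier mass lies in the column of $(n-1,1)$ indexed by the tableau with $n$ in the second row, where $\mu_K=\mu_S=n$. This gives $C_K(\alpha)=e^{-\alpha}\var(\phi_n)$ and $C_S(\alpha)=e^{-\alpha n}\var(\phi_n)$.
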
 
 Furthermore, we compare the mean-field setting with the spatial setting corresponding to the interchange process on the $d$-dimensional torus. Given a fixed positive integer $d$, the discrete $d$-dimensional torus on $n^d$ vertices, denoted by $\mathbb{T}_n^d$, is the graph with vertex set $\mathbb{Z}_n^d$ (the additive group with addition defined component wise modulo $n$) and edge set $
 \bigl\{\{x,y\}: x-y \in \{\pm e_1,\pm e_2,\ldots,\pm e_d\}\bigr\}$, where 
  \begin{align*}
  	\pm e_j:=(0,\dots,0,\;&\underset{\uparrow}{\pm 1},0,\dots,0)\in\mathbb{Z}_n^d,\quad 1\leq j\leq d.\\[-1ex]
  	&\hspace*{-0.45cm}j\text{th position.}
  \end{align*}
 We are now in a position to state our next comparison result.
  \begin{thm}\label{thm:Torus<->complete}
  	For every $n\geq 1$, let $\f_n:S_n\rightarrow \{0,1\}$ be a Boolean function. Then, for any fixed dimension $d$,
  	\begin{enumerate}
  		\item The sequence $\{\f_{n^d}\}_{n=1}^{\infty}$ is noise sensitive with respect to the interchange process on $K_{n^d}$ (the complete graph on $n^d$ vertices) implies that it is noise sensitive with respect to the interchange process on $\mathbb{T}^d_n$ (the $d$-dimensional torus).
  		\item The sequence $\{\f_{n^d}\}_{n=1}^{\infty}$ is noise stable with respect to the interchange process on $\mathbb{T}^d_n$ implies that it is noise stable with respect to the interchange process on $K_{n^d}$.
  	\end{enumerate}
 % 	The function given in Example \ref{ex:dictator-even_fn} ensures that the converses are not true.
  \end{thm}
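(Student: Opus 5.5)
The plan is to write both covariances spectrally over the irreducible representations of $S_N$, with $N=n^d$, and compare the two generators isotypic component by isotypic component. For a generator $L_\mathscr{G}=\sum_{\{u,v\}\in E}(\rho(\mathrm{id})-\rho((u\,v)))$ acting by right multiplication, Fourier inversion gives
\[
\cov\bigl(\f(X(0)),\f(X(t))\bigr)=\frac{1}{N!}\sum_{\lambda\neq(N)} d_\lambda \Tr\bigl(\hat{\f}(\lambda)^{*}\, e^{-t\hat L_\mathscr{G}(\lambda)}\hat{\f}(\lambda)\bigr),
\]
and each summand is nonnegative because $\hat L_\mathscr{G}(\lambda)$ is positive semidefinite. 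For $K_N$ the operator $\hat L_{K_N}(\lambda)$ is a scalar $c_\lambda$, since $\sum_{i<j}(i\,j)$ is central. Its value is $c_\lambda=\binom N2(1-\chi_\lambda(\tau)/d_\lambda)$, and the minimum over $\lambda\neq(N)$ is $N$, attained at $\lambda=(N-1,1)$. So $t_{\rel}(K_N)=1/N$, and the $K_N$ covariance at time $\alpha/N$ is $\frac1{N!}\sum_\lambda d_\lambda e^{-\alpha c_\lambda/N}\|\hat\f(\lambda)\|_{HS}^2$. For the torus, the interchange spectral gap equals the random walk gap (Caputo--Liggett--Richthammer, i.e.\ Aldous' conjecture): $\gamma_{\mathbb T}=2(1-\cos(2\pi/n))\asymp n^{-2}$, so $t_{\rel}(\mathbb T^d_n)\asymp n^2$.

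The key step is a Dirichlet-form comparison on each isotypic block. Each transposition $(x\,y)$ is a product of $2\,\mathrm{dist}(x,y)-1$ nearest-neighbour transpositions along a geodesic. The standard canonical-path comparison, done uniformly and in matrix form, should give the operator inequality
\[
\hat L_{K_N}(\lambda)\preceq B\,\hat L_{\mathbb T}(\lambda),\qquad B\lesssim N n^2 \ \text{(up to constants depending on }d).
\]
The bound on $B$ comes from the mean path length $\asymp n$ times the edge congestion, which is $\asymp N n/(dN)\cdot$(number of pairs)$/N$; this has to be computed honestly. The obstacle is that comparison yields $\hat L_K\preceq B\hat L_{\mathbb T}$ but not simultaneous diagonalisation, so we cannot directly bound $e^{-t\hat L_{\mathbb T}}$ by $e^{-t\hat L_K/B}$.

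I expect to get around this using that $\hat L_K(\lambda)=c_\lambda I$ is scalar. The inequality then says every eigenvalue of $\hat L_{\mathbb T}(\lambda)$ is at least $c_\lambda/B$. Hence $\|e^{-t\hat L_{\mathbb T}(\lambda)}\|_{op}\le e^{-tc_\lambda/B}$, and each summand of the torus covariance is at most $d_\lambda e^{-tc_\lambda/B}\|\hat\f(\lambda)\|_{HS}^2$. Now take $t=\alpha t_{\rel}(\mathbb T)\asymp\alpha n^2$. Then $tc_\lambda/B\gtrsim \alpha' c_\lambda/N$ with $\alpha'$ a constant multiple of $\alpha$. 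So the torus covariance at $\alpha t_{\rel}(\mathbb T)$ is bounded by the $K_N$ covariance at $\alpha' t_{\rel}(K_N)$, which tends to $0$ by hypothesis for every fixed $\alpha'$. This proves (1), and the same argument applied to $\mathrm{Star}_n$ would reproduce Theorem~\ref{thm:star<->complete}.

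For (2), write $\mathbb P(\f(X_0)\neq\f(X_t))=2\bigl(\mathbb E\f^2-\mathbb E[\f(X_0)\f(X_t)]\bigr)=\frac{2}{N!}\sum_\lambda d_\lambda\Tr\bigl(\hat\f^*(I-e^{-t\hat L})\hat\f\bigr)$. This quantity is a decreasing function of the sandwiched exponential. Reversing the comparison needs a lower bound on $e^{-t\hat L_K}$. Here too scalarity helps: since $\hat L_{\mathbb T}(\lambda)\preceq \mathrm{const}\cdot\hat L_{K}(\lambda)\cdot N/(dN\cdot\ldots)$, i.e.\ each torus edge is itself a transposition, $L_{\mathbb T}$ is a sub-sum of the terms in $L_K$. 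The naive bound $\hat L_{\mathbb T}\preceq \hat L_K$ is too weak, because time scales are compared via the gap ratio. Instead I will use monotonicity in $\lambda$ of the bound $\|\hat\f(\lambda)\|^2$-weights, together with the inequality $1-e^{-x}\le C(1-e^{-y})$ whenever $x\le Cy$ (valid for $C\ge1$). The condition needed is $\alpha c_\lambda/N\lesssim$ every eigenvalue of $\alpha n^2\hat L_{\mathbb T}(\lambda)$, which is exactly what the comparison gives. Then $\mathbb P_K(\alpha)\le C\,\mathbb P_{\mathbb T}(C'\alpha)$, and stability transfers. The main thing to verify carefully is the congestion constant $B$.
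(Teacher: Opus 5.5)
Your proof is correct and is essentially the paper's. The paper likewise reduces everything to the blockwise inequality $\Delta_{\mathbb{T}}(\rho_\lambda)\succeq \frac{C_d}{N^{1+2/d}}\,\nu\bigl(\binom{N}{2}-\mathrm{Diag}(\lambda)\bigr)I$ and exploits that the complete-graph block is scalar. It differs only in obtaining that comparison from the Alon--Kozma inequality (with $t_{\mathrm{mix}}(1/4)\lesssim d n^2$) rather than from canonical paths, which with translation-equivariant coordinate-by-coordinate geodesics also give $B\lesssim_d N n^2$. It also concludes via Forsstr\"om's spectral criteria (Lemmas~\ref{lem:spectral_NS} and~\ref{lem:spectral_NStable}) rather than by your direct semigroup bounds.

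In part~(2) no reversed comparison is needed. The same bound $\lambda_{\min}(\hat L_{\mathbb T}(\lambda))\ge c_\lambda/B$ immediately gives $\mathbb{P}_{\mathbb T}(t)\ge \mathbb{P}_{K}(t/B)$. So the detour through $\hat L_{\mathbb T}\preceq\cdots\hat L_K$ and lower bounds on $e^{-t\hat L_K}$ can simply be deleted.
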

 Since the interchange process on $K_n$ is the continuous-time random walk on the symmetric group $S_n$ generated by all transpositions, we further generalize this setting and prove the following universality property.
  \begin{thm}\label{thm:RT<-->s-cycle}
  	Let $s$ be an even integer satisfying $2<s\ll n$. Then a Boolean function is noise sensitive (respectively stable) with respect to the interchange process on $K_n$ if and only if it is noise sensitive (respectively stable) with respect to the continuous-time random walk on $S_n$ generated by all $s$-cycles.
  \end{thm}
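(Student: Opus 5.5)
Both random walks are class-invariant: their generators are multiples of class sums, so they act diagonally on the Fourier decomposition of $L^2(S_n)$ into isotypic components indexed by partitions $\lambda\vdash n$. The plan is therefore spectral. For a random walk generated by a conjugacy class $C$ with total rate normalized to $1$, the eigenvalue on the $\lambda$-isotypic component is $1-\chi^\lambda(C)/d_\lambda$, where $\chi^\lambda(C)$ is the character value on $C$ and $d_\lambda=\chi^\lambda(\mathrm{id})$. For $f=\f_n-\mathbb{E}\f_n$ we get
\[
\cov\bigl(\f_n(X(0)),\f_n(X(t))\bigr)=\sum_{\lambda\neq(n)} e^{-t(1-r_\lambda(C))}\,\|\widehat f_\lambda\|^2,\qquad r_\lambda(C):=\chi^\lambda(C)/d_\lambda .
\]
Since $\|\widehat f_\lambda\|^2\ge 0$, both noise sensitivity and noise stability depend only on the energy spectrum $\{\|\widehat f_\lambda\|^2\}$ and on the rescaled eigenvalues $\mu_\lambda:=t_{\rel}(1-r_\lambda)$. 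The same formula gives $\mathbb{P}(\f_n(X(0))\neq\f_n(X(t)))=2\sum_{\lambda}(1-e^{-t\mu_\lambda/t_{\rel}})\|\widehat f_\lambda\|^2$. So it suffices to prove a two-sided comparison
\[
c\,\mu^{(2)}_\lambda\le \mu^{(s)}_\lambda\le C\,\mu^{(2)}_\lambda\qquad\text{for all }\lambda\neq(n),
\]
with constants $c,C>0$ independent of $n$ and $\lambda$. Once this holds, replacing $\alpha$ by $c\alpha$ or $C\alpha$ transfers both limits in \Cref{def:Spectral_NS+NStable} in each direction: sensitivity for all $\alpha$ is preserved, and $\sup_n$ of the disagreement probability is monotone in $\alpha$.

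For the key steps, first the transposition walk. Frobenius' formula gives $r_\lambda(\tau)=\frac{2}{n(n-1)}\sum_{(i,j)\in\lambda}(j-i)$, the normalized content sum. The spectral gap is attained at $\lambda=(n-1,1)$, where $1-r=\frac{2}{n-1}$, so $t^{(2)}_{\rel}=\frac{n-1}{2}$.

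Second, the $s$-cycle walk. I would use the known description of normalized characters on cycles via contents or shifted power sums (Kerov--Olshanski / Ingram-type formulas), or alternatively the estimates of Berestycki--Schramm--Zeitouni and Hough on random walks generated by $s$-cycles. For $\lambda=(n-1,1)$ we have $r=\frac{\#\mathrm{fix}-1}{n-1}=1-\frac{s}{n-1}$, so $t^{(s)}_{\rel}=\frac{n-1}{s}$, provided this is indeed the gap. Evenness of $s$ matters here. When $s$ is odd, $s$-cycles are even permutations, the walk lives in $A_n$, and the sign representation has eigenvalue $1$; that breaks the comparison at $\lambda=(1^n)$. For even $s$, $r_{(1^n)}=-1$, so $\mu_{(1^n)}$ is comparable on both sides.

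Third, the comparison itself. I need $1-r_\lambda(s\text{-cycle})\asymp \frac{s}{2}\,(1-r_\lambda(\tau))$ uniformly in $\lambda$. One route: expand $r_\lambda(C_s)$ as a polynomial in contents, $r_\lambda(C_s)\approx\bigl(\frac{1}{n}\sum\text{contents}\cdots\bigr)$, whose leading behaviour is $r_\lambda(\tau)^{s-1}$-like. In the "large" regime, $1-r_\lambda(\tau)$ is of order $1$ and we only need $1-r^{(s)}_\lambda$ bounded below by a constant times $s/n\cdot n$ — more precisely, both $\mu$'s are at least a constant times $n$. In the regime where $\lambda$ has a long first row (or column), $n-\lambda_1=k$, the Taylor expansion gives $1-r^{(s)}\approx s k/n$ and $1-r^{(2)}\approx 2k/n$.

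The main obstacle is this uniform two-sided bound on character ratios for all $\lambda$ simultaneously, especially the intermediate shapes and negative character values. For the upper bound, $1-r\le 2$ and $\mu^{(2)}\ge 1$ handle the bad cases. The delicate part is the lower bound on $1-r^{(s)}_\lambda$, which requires $r^{(s)}_\lambda\le 1-c\,s(1-r^{(2)}_\lambda)/2$. I would try first a Roichman/Larsen--Shalev type estimate, $|r_\lambda(C_s)|\le \max(\lambda_1/n,\lambda'_1/n)^{cs}$, combined with the exact first-order expansion for near-trivial shapes. That is where $s\ll n$ enters, and negative values near $-1$ (near-sign shapes) are harmless because they make $1-r$ large.
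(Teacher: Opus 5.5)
Your spectral reduction is sound: both generators are scalar on each isotypic component, and a bound $c\,\mu^{(2)}_\lambda\le\mu^{(s)}_\lambda\le C\,\mu^{(2)}_\lambda$ with $n$-independent constants would transfer both properties by rescaling $\alpha$. The gap is that this bound carries the whole content of the theorem, you leave it unproved, and in the range the statement covers it is false. The hypothesis $2<s\ll n$ allows $s=s(n)\to\infty$ with $s=o(n)$; the paper's argument only needs error terms such as $O(sr^2/n)$ to vanish. At $\lambda=(1^n)$, in your normalisation, $\mu^{(2)}_{(1^n)}=2t^{(2)}_{\rel}=n-1$, whereas $\mu^{(s)}_{(1^n)}=2t^{(s)}_{\rel}=2(n-1)/s$. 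So the ratio is $2/s\to0$, and the same happens at $(2,1^{n-2})$ and other near-sign shapes. Your remark that $(1^n)$ is ``comparable on both sides'' holds only up to the factor $s/2$. On the upper side, $1-r\le 2$ and $\mu^{(2)}\ge1$ give only $\mu^{(s)}_\lambda\le\frac{2(n-1)}{s}\mu^{(2)}_\lambda$, which leaves the shapes with $1\ll n-\lambda_1\lesssim n/s$ untreated. (For fixed $s$ the two-sided bound is true, with constants depending on $s$. It is easier to obtain from a Diaconis--Saloff-Coste Dirichlet-form comparison with symmetrised paths than from character bounds: an $s$-cycle is a product of $s-1$ transpositions, and a transposition is a product of $s-1$ $s$-cycles when $n>s$. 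Since both walks are scalar on isotypic components, the form inequality then holds eigenvalue by eigenvalue.)

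What is really needed is much weaker, because Lemma~\ref{lem:spectral_NS} and Lemma~\ref{lem:spectral_NStable} only see the spectrum at bounded rescaled level. It suffices that for every $k$ there is $k'$ with $\Lambda_k\subseteq\Theta_{k'}$ and $\Theta_k\subseteq\Lambda_{k'}$ for all large $n$; that is, $\mu^{(2)}_\lambda$ stays bounded exactly when $\mu^{(s)}_\lambda$ does. This is the paper's route. For $\lambda=(n-r,\xi)$ with $r$ bounded, Proposition~\ref{prop:s-cycle_ch._estimate} gives $\mu^{(s)}_\lambda=\frac{n-1}{s}\bigl(1-e^{-rs/n}(1+O(s/n^2))\bigr)=r+o(1)$. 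The content formula gives $\mu^{(2)}_\lambda=r+O(r^2/n)$, so $\Theta_k$ and $\Lambda_k$ agree up to a shift of index. The mismatch at near-sign shapes is then harmless, since there both rescaled eigenvalues tend to infinity; this is where the evenness of $s$ and $s=o(n)$ enter. One still needs, as the paper uses tacitly, that shapes with $n-\lambda_1$ unbounded have $\mu^{(s)}_\lambda\to\infty$. A Roichman-type bound is the right tool there, but aim it at this threshold statement rather than at a uniform ratio.
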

  An important example of a sequence of Boolean functions is given by the indicator of the set of permutations containing a long cycle. The precise result is stated in the following theorem.
\begin{thm}\label{thm:large_cycle1}
	Let $0< c<1$ be fixed. Then the function $\xi_n:S_n\rightarrow \{0,1\}$ defined by
	\[\xi_n:=\mathbb{1}_{\{\pi\in S_n:\text{the length of the largest cycle in }\pi\text{ is at least }cn\}},\]
	is noise sensitive with respect to the interchange process on $K_n$.
\end{thm}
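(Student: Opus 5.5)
The plan is to reduce the statement to the mixing of the macroscopic cycle structure under the \emph{discrete} split--merge dynamics. This is possible because at time $\alpha t_{\rel}$ the walk has performed a number of transpositions that grows linearly in $n$.

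\textbf{Step 1: time scale.} With unit rate on each edge of $K_n$, the spectral gap is the eigenvalue of the generator on the standard representation $(n-1,1)$, which equals $n$. Hence $t_{\rel}=1/n$. At time $\alpha/n$ the number of transpositions applied is $N\sim\mathrm{Poisson}(\alpha(n-1)/2)$, so $N\to\infty$ in probability for every fixed $\alpha>0$.

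Write $X(0)=\sigma$ uniform and $X(\alpha t_{\rel})=\sigma\tau_1\cdots\tau_N$ with i.i.d.\ uniform transpositions $\tau_i$. By reversibility,
\[
\cov(\xi_n(X(0)),\xi_n(X(\alpha t_{\rel})))=\|P_{\alpha/(2n)}\xi_n-\mathbb{E}\xi_n\|_2^2\ge 0,
\]
and this quantity is nonincreasing in time. It therefore suffices to prove that for every fixed $m$,
\[
\limsup_n \cov\bigl(\xi_n(\sigma),\xi_n(\sigma\tau_1\cdots\tau_m)\bigr)\le \delta(m),\qquad \text{with } \delta(m)\to0 \text{ as } m\to\infty.
\]
Monotonicity then lets us replace $N$ by any fixed $m$ for large $n$. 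Since the map $t\mapsto\cov$ is monotone in continuous time, I would work with the continuous-time version throughout: compare $\alpha/n$ with the time needed for about $m$ steps, i.e.\ $2m/n\ll\alpha/n$, which holds for large $n$.

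\textbf{Step 2: scaling limit of the cycle partition.} Let $\Lambda_k=\frac1n(\text{cycle lengths of }\sigma\tau_1\cdots\tau_k)$, sorted in decreasing order. This is a Markov chain, because right multiplication by a transposition either merges two cycles (with probability $\approx 2ab$ for cycles of proportions $a,b$) or splits a cycle of proportion $a$ at a uniform point (with probability $\approx a^2$).

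For fixed $m$, as $n\to\infty$, the sequence $(\Lambda_0,\dots,\Lambda_m)$ converges in law (in $\ell^1$ on ordered partitions) to $(\Lambda^\infty_0,\dots,\Lambda^\infty_m)$. Here $\Lambda^\infty_0\sim\mathrm{PD}(1)$, and $\Lambda^\infty$ is the discrete-time uniform split--merge (basic coagulation--fragmentation) chain of Tsilevich and Diaconis--Mayer-Wolf--Zeitouni--Zerner. Two facts drive this convergence:
\begin{itemize}
\item the uniform-permutation cycle vector converges to $\mathrm{PD}(1)$;
\item the one-step transition kernel is continuous in $\ell^1$ and is approximated uniformly by its $n=\infty$ analogue, since microscopic cycles carry vanishing mass.
\end{itemize}
Because the law of $\max\Lambda^\infty_k$ is $\mathrm{PD}(1)$-marginal, hence atomless at $c$, the functional $\mathbb{1}\{\max\ge c\}$ is a.s.\ continuous. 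This gives
\[
\cov\bigl(\xi_n(\sigma),\xi_n(\sigma\tau_1\cdots\tau_m)\bigr)\longrightarrow \cov\bigl(\mathbb{1}\{\max\Lambda^\infty_0\ge c\},\ \mathbb{1}\{\max\Lambda^\infty_m\ge c\}\bigr).
\]

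\textbf{Step 3: mixing of the limit chain.} It remains to show that the right-hand side tends to $0$ as $m\to\infty$. Since $\mathrm{PD}(1)$ is reversible for this chain, the right-hand side equals $\|Q^{m/2}g-\mathbb{E}g\|_{L^2(\mathrm{PD}(1))}^2$ for $m$ even. So it suffices that the split--merge chain started from $\mathrm{PD}(1)$ is ergodic, i.e.\ $Q^k g\to\mathbb{E}g$ in $L^2$ for every $g\in L^2$.

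I expect this step to be the main obstacle. My first attempt would be Schramm's coupling (Schramm, \emph{Compositions of random transpositions}): it couples the split--merge chain from an arbitrary initial partition with the stationary one so that they agree on macroscopic parts after finitely many steps with high probability. This yields $\|\mathcal{L}(\Lambda^\infty_k\mid\Lambda^\infty_0)-\mathrm{PD}(1)\|_{TV}\to0$ in a suitably coarse-grained sense, and hence the required decay after integrating against $\Lambda^\infty_0$. As a fallback I would use uniqueness of the invariant law together with reversibility: any $Q$-invariant $L^2$ function must be a.s.\ constant, and by the von Neumann ergodic theorem (applied to $Q^2$) this gives the $L^2$ convergence. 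Finally, combining Steps 1--3 gives $\cov\to0$ for every fixed $\alpha>0$, which is exactly noise sensitivity in the sense of \Cref{def:Spectral_NS+NStable}.
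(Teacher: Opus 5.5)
Your architecture (time-monotonicity of the covariance, the $\mathrm{PD}(1)$ split--merge scaling limit, ergodicity of the limit chain) is sound, but Step 3 as written does not deliver the $\delta(m)\to0$ that Step 1 requires. Uniqueness of the invariant law (Diaconis--Mayer-Wolf--Zeitouni--Zerner) only shows that $\ker(Q-I)$ consists of constants in $L^2(\mathrm{PD}(1))$. The von Neumann theorem for $Q^2$ converges to the projection onto $\ker(Q^2-I)=\ker(Q-I)\oplus\ker(Q+I)$. Nothing you say excludes a nonconstant $h$ with $Qh=-h$. If such an $h$ exists, then with $\bar g=g-\mathbb{E}g$ the quantity $\langle\bar g,Q^m\bar g\rangle$ need not tend to $0$ along even $m$. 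This is not a formality: for every finite $n$ the chain has exactly such an eigenfunction, since $\mathrm{sgn}(\sigma\tau)=-\mathrm{sgn}(\sigma)$ (each split or merge changes the number of blocks by $\pm1$). Ruling out a measurable ``parity'' in the limit would need an argument you do not give. The Schramm-coupling alternative is too imprecise as stated to close this gap.

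The repair is already latent in your continuous-time remark: Poissonize instead of fixing $m$. Put $s_n=M/\binom{n}{2}$; note that $m$ steps take time about $2m/n^2$, not $2m/n$. Then $s_n<\alpha/n$ for large $n$, and the number of transpositions by time $s_n$ is $J\sim\mathrm{Poisson}(M)$, independent of $\sigma$ and the $\tau_i$. Use monotonicity, Step 2, and dominated convergence (all covariances are bounded by $1/4$). With $Q$ self-adjoint by reversibility and $\mu_{\bar g}$ the spectral measure of $\bar g$, this gives
\[
\limsup_{n\to\infty}\cov\bigl(\xi_n(X_n(0)),\xi_n(X_n(\alpha/n))\bigr)\le\sum_{j\ge0}\mathbb{P}(J=j)\,\langle\bar g,Q^j\bar g\rangle=\langle\bar g,e^{M(Q-I)}\bar g\rangle=\int_{[-1,1]}e^{M(x-1)}\,d\mu_{\bar g}(x).
\]
As $M\to\infty$ the right side tends to $\mu_{\bar g}(\{1\})=\|P_{\ker(Q-I)}\bar g\|^2=0$; a possible atom at $-1$ is damped by $e^{-2M}$. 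So ergodicity alone suffices and the coupling is unnecessary. With this change your proof is complete, modulo its two external inputs.

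This is a genuinely different proof from the paper's. The paper stays inside the representation theory of $S_n$. By Theorem \ref{thm:compgrp_NS-NStability}, it suffices that the coefficients of the class function $\xi_n$ on the characters $\chi^{(n-r,\mu)}$, $r\le k$, vanish asymptotically. Writing $\xi_n=\sum_{j\ge cn}\alpha_j-\sum_{M_\mu\ge2}(M_\mu-1)\beta_\mu$, the paper computes the first part exactly from the Alon--Kozma expansion (Theorem \ref{thm:cycle-length_sum}, weight at most $k^2/(n-k+1)^2$) and then bounds the correction term. That route is explicit, and quantitative for $c\ge\frac12$, where the correction is absent or a single class.

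Your route gives no rate and rests on heavy inputs (the split--merge scaling limit and the DMZZ uniqueness theorem). In exchange, it handles every $c\in(0,1)$ at once, indeed any $\mathrm{PD}(1)$-a.s.\ continuous functional of the macroscopic cycle structure. It even gives decorrelation at all times $t_n$ with $n^2t_n\to\infty$, far below $t_{\rel}=1/n$.

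It also sidesteps the delicate point of the spectral route for $c<\frac12$. There the coefficient of $\sum_{M_\mu\ge2}(M_\mu-1)\beta_\mu$ on $\chi^\lambda$ is small only through cancellation, namely asymptotic independence of the short-cycle counts from the long cycles. The paper's estimate instead replaces $\bigl(\sum_\mu|x_\mu|\bigr)^2$ by $\sum_\mu|x_\mu|^2$, where $x_\mu=(M_\mu-1)|\mathscr{C}_\mu|\chi^\lambda(\mu)/n!$. That step is not justified over so many classes; for $\lambda=(n-1,1)$ the sum $\sum_\mu|x_\mu|$ is in fact of order one.
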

Theorem \ref{thm:large_cycle1} guarantees that the sequence $\{\xi_n\}_{n=1}^{\infty}$ is noise sensitive with respect to:
\begin{itemize}
	\item the interchange process on $\ST$, by Theorem \ref{thm:star<->complete}
	\item the continuous-time random walk on $S_n$ generated by all $s$-cycles, by Theorem \ref{thm:RT<-->s-cycle}.
\end{itemize}
Using Theorem \ref{thm:Torus<->complete}, Theorem \ref{thm:large_cycle1} further implies the following corollary.
\begin{cor}\label{cor:long_torus}
  	The sequence of functions $\{\xi_{n^d}\}_{n=1}^{\infty}$	is noise sensitive with respect to the interchange process on the $d$-dimensional (discrete) torus $\mathbb{T}^d_n$.
  \end{cor}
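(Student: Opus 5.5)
This is an immediate combination of Theorem~\ref{thm:large_cycle1} with part (1) of Theorem~\ref{thm:Torus<->complete}; we argue along a subsequence.

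Fix $d$ and set $N = n^d$. Identify the vertex set $\mathbb{Z}_n^d$ of $\mathbb{T}^d_n$ with $[N]$ through a fixed bijection. Under this identification the interchange process on $\mathbb{T}^d_n$ is a continuous-time random walk on $S_N$. Moreover, $\xi_N$ depends only on the cycle type of the permutation, so $\xi_N$ is a class function. Hence $\xi_N$ does not depend on the chosen labelling, and neither does the covariance in \eqref{eq:Spectral_NS-def}, since relabelling amounts to conjugation.

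By Theorem~\ref{thm:large_cycle1}, for every $\alpha>0$,
\[
\cov\bigl(\xi_N(X_N(0)),\,\xi_N(X_N(\alpha t^N_{\rel}))\bigr) \longrightarrow 0 \quad\text{as } N\to\infty,
\]
where $X_N$ is the interchange process on $K_N$. This limit holds along the full sequence of $N$, so in particular it holds along the subsequence $N = n^d$. Therefore $\{\xi_{n^d}\}_{n=1}^\infty$ is noise sensitive with respect to the interchange process on $K_{n^d}$ in the sense of Definition~\ref{def:Spectral_NS+NStable}.

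Part (1) of Theorem~\ref{thm:Torus<->complete} then applies directly to the sequence $\f_{n^d} := \xi_{n^d}$. It yields noise sensitivity with respect to the interchange process on $\mathbb{T}^d_n$, which is the claim.

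The only point needing care is that Theorem~\ref{thm:Torus<->complete} is stated for a sequence indexed by $n^d$, whereas Theorem~\ref{thm:large_cycle1} gives a statement for every $n$. This is handled by the remark above that a limit along the full sequence implies the same limit along any subsequence. The relaxation times $t^N_{\rel}$ appearing in the hypothesis are those of the $K_N$ chain, exactly as in the theorem's formulation, so no further adjustment is needed.
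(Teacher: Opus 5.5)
Your proposal is correct and is the same argument the paper uses: the corollary follows by combining Theorem~\ref{thm:large_cycle1} with part (1) of Theorem~\ref{thm:Torus<->complete}. Your remarks about restricting to the subsequence $N=n^d$, and about $\xi_N$ being a class function so the vertex labelling does not matter, are valid extra care that the paper leaves implicit.
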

  Our motivation for considering the Boolean functions appearing in Corollary \ref{cor:long_torus} stems from a celebrated result of B\'{a}lint T\'{o}th on the \emph{quantum Heisenberg ferromagnet} \cite{Toth-conjecture}. T\'{o}th's representation formula for the spontaneous magnetization of the quantum Heisenberg ferromagnet at inverse temperature $\beta$ is closely related to the interchange process on the three-dimensional integer lattice $\{-n, \dots, -1, 1, \dots, n\}^3$, and he conjectured that the spontaneous magnetization undergoes a phase transition. It is therefore natural to formulate an analogous conjecture for the interchange process on the lattice $\{-n, \dots, -1, 1, \dots, n\}^3$. T\'{o}th himself suggested that this could serve as an interesting toy model. The conjecture is as follows.
  \begin{conj}[{\cite[Section 5: T\'{o}th's conjecture]{AK}}]
  	Let $\sigma_\beta$ denote the permutation generated by the interchange process on the lattice $\{-n,\dots,-1,1,\dots,n\}^3$ at time $\beta$. Then, for every fixed $c\in(0,1)$,
  	\[\lim_{n\rightarrow\infty}\mathbb{P}(\text{the length of the largest cycle of }\sigma_{\beta}>c (2n)^3)\]
  	undergoes a phase transition as a function of $\beta$; it is zero for $\beta<\beta'$ (critical inverse temperature) and positive for $\beta>\beta'$
  \end{conj}
  Another source of motivation is the work of Alon and Kozma \cite{AK}, who expressed the number of cycles of a given length in a permutation as a linear combination of irreducible characters of the symmetric group. Using this formula, they obtained an explicit expression for the probability that the permutation generated by the interchange process on a connected graph is an $n$-cycle at time $t$, together with estimates for the probabilities of observing shorter cycles. For further details, we refer the reader to \cite{AK} and the references therein.
 
 The discrete torus may be viewed as a finite periodic analogue of the integer lattice. To avoid additional technical complications, we focus throughout this paper on the interchange process on the discrete torus and consider dimensions $d \geq 1$.
 
\emph{Organisation of the paper.} In Section \ref{sec:preliminaries}, we discuss the notation and preliminaries for the general setup. Section \ref{sec:Sn_repn} is devoted to the representation theory of the symmetric group and related combinatorics. In Section \ref{sec:proof_of_thm:1.1}, we compare the interchange processes on $K_n$ and $\ST$, and prove Theorem \ref{thm:star<->complete}. In Section \ref{sec:proof_of_thm:1.2}, we focus on the interchange process on $\mathbb{T}_n^d$ and prove Theorem \ref{thm:Torus<->complete}. Section \ref{sec:proof_of_thm:1.3} is devoted to the continuous-time random walk on $S_n$ generated by cycles, and to the proof of Theorem \ref{thm:RT<-->s-cycle}. Finally, in Section \ref{sec:examples}, we study several examples and prove Theorem \ref{thm:large_cycle1}.
\section{Notations and preliminaries}\label{sec:preliminaries}
Our methodology mainly uses the non-commutative Fourier analysis and the representation theory of the finite group, particularly $S_n$. In this section, we briefly recall the useful concepts from continuous-time Markov chain, the finite group representation, and the non-commutative Fourier analysis on finite groups. We refer the reader to the classical texts \cite{D1,Sagan,Serre} for more details.
\subsection{Boolean functions and continuous-time Markov chains}\label{subsec:CTMC_bkground}
We recall some standard facts and notation for Boolean functions defined on the state space of a continuous-time Markov chain to keep the article self-contained. For more details, we refer the reader to \cite{Forsstrom_arXiv,GS}. Let $Q_n=(q_{n,ij})_{i,j\in\Omega_n}$ be the infinitesimal generator of the reversible and irreducible continuous-time Markov chain $(X_n(t),\Omega_n,\pi_n)$ with stationary distribution $\pi_n$. Throughout this section, we will assume $X_n(0)\sim \pi_n$. Given any two functions $\f,\g:\Omega_n\rightarrow\mathbb{C}$, we define a (complex) inner product by
\begin{equation}\label{eq:MCIP}
	\langle \f,\g\rangle:=\sum_{\omega\in\Omega_n}\f(\omega)\overline{\g(\omega)}\pi_n(\omega)=\mathbb{E}\left(\f(X_n(0))\cdot \overline{\g(X_n(0))}\right).
\end{equation}
Reversibility ensures the existence of an $\langle \;,\;\rangle$-orthonormal basis $\{\psi_{n,j}\}_{j=0}^{|\Omega_n|-1}$, consisting of the eigenfunctions of $-Q_n$, with corresponding eigenvalue $\{\theta_{n,j}\}_{j=0}^{|\Omega_n|-1}$. Without loss of generality, we may assume that the eigenvalues $\theta_{n,0},\dots,\theta_{n,|\Omega_n|-1}$ satisfies the following
\[0=\theta_{n,0}<\theta_{n,1}\leq\theta_{n,2}\leq\cdots\leq\theta_{n,|\Omega_n|-1}.\]
Here the strict inequality $\theta_{n,0}<\theta_{n,1}$ follows from irreducibility. The unique eigenfunction $\psi_{n,0}$ corresponding to the smallest eigenvalue $\theta_{n,0}$ is given by $\psi_{n,0}(\omega)=1\text{ for all }\omega\in\Omega_n$. 
Therefore, for any $\f:\Omega_n\rightarrow\mathbb{R}$, using the decomposition $\f=\displaystyle\sum_{j=0}^{|\Omega_n|-1}\langle \f,\psi_{n,j}\rangle\psi_{n,j}$, we have
\begin{equation}\label{eq:CTMC_E-V}
	\mathbb{E}(\f(X_n(0)))=\langle \f,\psi_{n,0}\rangle\;\text{ and }\;\var(\f(X_n(0)))=\langle \f,\f\rangle-\langle \f,\psi_{n,0}\rangle^2=\sum_{j=1}^{|\Omega_n|-1}|\langle \f,\psi_{n,j}\rangle|^2.
\end{equation}
%We also note that (cf. proof of Proposition 1.4 in \cite{Forsstrom_arXiv})
%\begin{equation}\label{eq:cov-ev_relation}
	%	\cov\left(f(X_0^{(n)}),f(X_{\alpha t^{(n)}_{\rel}}^{(n)})\right)=\sum_{j=1}^{|\Omega^{(n)}|-1}e^{-\alpha\theta^{(n)}_j/\theta^{(n)}_0}\langle f,\psi_j^{(n)}\rangle^2.
	%\end{equation}
The \emph{relaxation time}, denoted $t_{\rel}^{n}$, is given by
\[\frac{1}{t_{\rel}^{n}}:=\inf_{\f\not\equiv 0:\mathbb{E}(\f)=0}\frac{\langle-Q_n\f,\f\rangle}{\langle \f,\f\rangle}=\theta_{n,1}\]

We will now prove two standard results.
\begin{lem}\label{lem:cov-ns}
	Consider the continuous-time Markov chain $(X_n(t),\Omega_n,\pi_n)$ from above, and recall our assumption $X_n(0)\sim\pi_n$. Then, for any function $\f:\Omega_n\rightarrow \mathbb{R}$, we have the following
	\[\cov\left(\f(X_n(0)),\f(X_n(t))\right)=\displaystyle\sum_{j=1}^{|\Omega_n|-1}|\langle \f,\psi_{n,j}\rangle|^2\exp(-t\theta_{n,j}).\]
\end{lem}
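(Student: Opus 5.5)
The plan is a direct spectral computation, using the semigroup representation of the chain and the orthonormal eigenbasis $\{\psi_{n,j}\}$ of $-Q_n$ fixed above. We write $P_t=e^{tQ_n}$ for the transition semigroup, so that $\mathbb{E}\left(\f(X_n(t))\mid X_n(0)=\omega\right)=(P_t\f)(\omega)$. Since $X_n(0)\sim\pi_n$ and $\pi_n$ is stationary, $X_n(t)\sim\pi_n$ as well. Consequently $\mathbb{E}(\f(X_n(t)))=\mathbb{E}(\f(X_n(0)))=\langle \f,\psi_{n,0}\rangle$ by \eqref{eq:CTMC_E-V}.

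First, conditioning on $X_n(0)$ gives
\[\mathbb{E}\left(\f(X_n(0))\f(X_n(t))\right)=\sum_{\omega}\pi_n(\omega)\f(\omega)(P_t\f)(\omega)=\langle \f,P_t\f\rangle .\]
Here $\f$ is real-valued, so no conjugation issue arises. Next, we expand $\f=\sum_j\langle \f,\psi_{n,j}\rangle\psi_{n,j}$. Because $-Q_n\psi_{n,j}=\theta_{n,j}\psi_{n,j}$, we have $P_t\psi_{n,j}=e^{-t\theta_{n,j}}\psi_{n,j}$; this follows from the power series of the matrix exponential, or from uniqueness of solutions of $\frac{d}{dt}P_t=Q_nP_t$. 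Hence
\[P_t\f=\sum_{j}\langle \f,\psi_{n,j}\rangle e^{-t\theta_{n,j}}\psi_{n,j}.\]
By orthonormality, $\langle \f,P_t\f\rangle=\sum_{j}|\langle \f,\psi_{n,j}\rangle|^2e^{-t\theta_{n,j}}$, where the coefficients $e^{-t\theta_{n,j}}$ are real. Subtracting $\mathbb{E}(\f(X_n(0)))\,\mathbb{E}(\f(X_n(t)))=\langle \f,\psi_{n,0}\rangle^2$ removes exactly the $j=0$ term, since $\theta_{n,0}=0$ and $\langle \f,\psi_{n,0}\rangle$ is real for real $\f$. This yields the claimed formula.

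The only point needing care is the justification that $\{\psi_{n,j}\}$ is an orthonormal eigenbasis with respect to the $\pi_n$-weighted inner product. This rests on reversibility: $Q_n$ is self-adjoint in $L^2(\pi_n)$, and the paper already assumes this basis. If the basis is complex, we should check that the expansion and Parseval steps use the correct conjugations, but since $\f$ is real and the eigenvalues are real, this is routine. No step is a genuine obstacle.
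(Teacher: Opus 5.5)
Your proposal is correct and follows the paper's proof essentially step for step. Both arguments condition on $X_n(0)$ to write $\mathbb{E}\left(\f(X_n(0))\f(X_n(t))\right)=\sum_x\pi_n(x)\f(x)(e^{tQ_n}\f)(x)$, expand $\f$ in the eigenbasis to obtain $\sum_j|\langle\f,\psi_{n,j}\rangle|^2e^{-t\theta_{n,j}}$, and then use stationarity together with $\psi_{n,0}\equiv 1$ and $\theta_{n,0}=0$ to cancel the $j=0$ term against the product of the means.
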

\begin{proof}
	As $X_n(0)\sim\pi_n$, using $\mathbb{P}(X_n(t)=y|X_n(0)=x)=(\exp(t Q_n)\mathbb{1}_{\{y\}})(x)$, we have\begin{align}\label{eq:cov-ns1}
		\mathbb{E}\left(\f(X_n(0))\f(X_n(t))\right)&=\sum_{x,y\in \Omega_n}\f(x)\f(y)\mathbb{P}(X_n(t)=y|X_n(0)=x)\mathbb{P}(X_n(0)=x)\nonumber\\
		%&=\sum_{x,y\in \Omega_n}f_n(x)f_n(y)(\exp(t Q_n)\mathbb{1}_{\{y\}})(x)\pi_n(x)\nonumber\\
		&=\sum_{x\in \Omega_n}\pi_n(x)\f(x)\sum_{y\in \Omega_n}\f(y)(\exp(t Q_n)\mathbb{1}_{\{y\}})(x)\nonumber\\
		&=\sum_{x\in \Omega_n}\pi_n(x)\f(x)(\exp(t Q_n)\f)(x)
	\end{align}
	Now using the decomposition $\f=\displaystyle\sum_{j=0}^{|\Omega_n|-1}\langle \f,\psi_{n,j}\rangle\psi_{n,j}$, we obtain
	\[\exp(t Q_n)\f=\displaystyle\sum_{j=0}^{|\Omega_n|-1}\langle \f,\psi_{n,j}\rangle\exp(t Q_n)\psi_{n,j}=\displaystyle\sum_{j=0}^{|\Omega_n|-1}\langle \f,\psi_{n,j}\rangle\exp(-t\theta_{n,j})\psi_{n,j}.\]
	Therefore, the expression in \eqref{eq:cov-ns1}, and hence $\mathbb{E}\left(\f(X_n(0))\f(X_n(t))\right)$ is equal to
	\begin{align*}
		&\sum_{x\in \Omega_n}\pi_n(x)\f(x)\displaystyle\sum_{j=0}^{|\Omega_n|-1}\langle \f,\psi_{n,j}\rangle\exp(-t\theta_{n,j})\psi_{n,j}(x)\\
		=&\displaystyle\sum_{j=0}^{|\Omega_n|-1}\langle \f,\psi_{n,j}\rangle\exp(-t\theta_{n,j})\sum_{x\in \Omega_n}\pi_n(x)\f(x)\psi_{n,j}(x)=\displaystyle\sum_{j=0}^{|\Omega_n|-1}|\langle \f,\psi_{n,j}\rangle|^2\exp(-t\theta_{n,j}).
	\end{align*}
	We note that $X_n(t)\sim\pi_n$ as the initial distribution of the continuous-time Markov chain is the stationary distribution. Therefore, $\mathbb{E}\left(\f(X_n(0))\right)=\mathbb{E}\left(\f(X_n(t))\right)$, and hence,
	\begin{align*}
		\cov\left(\f(X_n(0)),\f(X_n(t))\right)&=\mathbb{E}\left(\f(X_n(0))\f(X_n(t))\right)-\mathbb{E}\left(\f(X_n(0))\right)\mathbb{E}\left(\f(X_n(t))\right)\nonumber\\
		&=\displaystyle\sum_{j=0}^{|\Omega_n|-1}|\langle \f,\psi_{n,j}\rangle|^2\exp(-t\theta_{n,j})-\left(\mathbb{E}\left(\f(X_n(0))\right)\right)^2\nonumber\\
		&=\displaystyle\sum_{j=0}^{|\Omega_n|-1}|\langle \f,\psi_{n,j}\rangle|^2\exp(-t\theta_{n,j})-\langle \f,\psi_{n,0}\rangle^2,\text{ by }\eqref{eq:CTMC_E-V}\nonumber\\
		&=\displaystyle\sum_{j=1}^{|\Omega_n|-1}|\langle \f,\psi_{n,j}\rangle|^2\exp(-t\theta_{n,j}),\quad\text{ using }\theta_{n,0}=0.\qedhere
	\end{align*}
\end{proof}
\begin{lem}\label{lem:boolean-st}
	Consider the continuous-time Markov chain $(X_n(t),\Omega_n,\pi_n)$ discussed above, and assuming $X_n(0)\sim\pi_n$. Then, for any Boolean function $\f:\Omega_n\rightarrow \{0,1\}$, we have
	\[\mathbb{P}\left(\f(X_n(0))\neq \f(X_n(t))\right)=2\displaystyle\sum_{j=1}^{|\Omega_n|-1}|\langle \f,\psi_{n,j}\rangle|^2\left(1-\exp(-t\theta_{n,j})\right).\]
\end{lem}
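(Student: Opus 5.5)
The plan is to reduce the disagreement probability to second moments and then apply the spectral identities already established. Since $\f$ takes values in $\{0,1\}$, the quantity $(\f(X_n(0))-\f(X_n(t)))^2$ equals the indicator of the event $\{\f(X_n(0))\neq\f(X_n(t))\}$. Taking expectations gives
\[\mathbb{P}\left(\f(X_n(0))\neq \f(X_n(t))\right)=\mathbb{E}\left(\f(X_n(0))^2\right)+\mathbb{E}\left(\f(X_n(t))^2\right)-2\,\mathbb{E}\left(\f(X_n(0))\f(X_n(t))\right).\]

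Next we use stationarity. Because $X_n(0)\sim\pi_n$, also $X_n(t)\sim\pi_n$, so the first two terms are each equal to $\langle \f,\f\rangle$. By the orthonormal expansion in the basis $\{\psi_{n,j}\}$, this is $\sum_{j=0}^{|\Omega_n|-1}|\langle \f,\psi_{n,j}\rangle|^2$.

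For the cross term, the computation in the proof of Lemma \ref{lem:cov-ns} already yields
\[\mathbb{E}\left(\f(X_n(0))\f(X_n(t))\right)=\sum_{j=0}^{|\Omega_n|-1}|\langle \f,\psi_{n,j}\rangle|^2\exp(-t\theta_{n,j}).\]
Substituting both expressions, the probability becomes
\[2\sum_{j=0}^{|\Omega_n|-1}|\langle \f,\psi_{n,j}\rangle|^2\left(1-\exp(-t\theta_{n,j})\right).\]
The $j=0$ term vanishes because $\theta_{n,0}=0$, which gives exactly the claimed formula.

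No step here is a genuine obstacle. The only points needing care are that $\f$ is real-valued, so no conjugation issues arise, and that we use stationarity of $X_n(t)$ rather than only of $X_n(0)$. Alternatively, one can write $\mathbb{P}(\neq)=2\left(\mathbb{E}\f-\mathbb{E}(\f(X_n(0))\f(X_n(t)))\right)$ using $\f^2=\f$, and then use $\mathbb{E}\f=\langle\f,\f\rangle$ for Boolean $\f$; this leads to the same computation.
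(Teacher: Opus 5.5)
Your proof is correct and follows essentially the same route as the paper. The paper uses $\f^2=\f$ and stationarity to write the disagreement probability as $2\bigl(\mathbb{E}[\f(X_n(0))^2]-\mathbb{E}[\f(X_n(0))\f(X_n(t))]\bigr)=2\bigl(\var-\cov\bigr)$, then applies \eqref{eq:CTMC_E-V} and Lemma \ref{lem:cov-ns}; this is your computation with the $j=0$ terms cancelled before the spectral expansion rather than after.
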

\begin{proof}
	Using $X_n(0)\sim\pi_n$, we obtain $\mathbb{E}\left(\f(X_n(0))\right)=\mathbb{E}\left(\f(X_n(t))\right)$. Also, the fact $\f$ is a Boolean function implies $\mathbb{E}\left(\f(X_n(0))\right)=\mathbb{E}\left(\f(X_n(0))^2\right)$, and
	\begin{align}\label{eq:boolean-st1}
		\mathbb{P}\left(\f(X_n(0))\neq \f(X_n(t))\right)&=\mathbb{E}\left[\f(X_n(t))(1-\f(X_n(0)))\right]+\mathbb{E}\left[(1-\f(X_n(t)))\f(X_n(0))\right]\nonumber\\
		&=2\left(\mathbb{E}\left[\f(X_n(0))\right]-\mathbb{E}\left[\f(X_n(0))\f(X_n(t))\right]\right)\nonumber\\
		&=2\left(\mathbb{E}\left[\f(X_n(0))^2\right]-\mathbb{E}\left[\f(X_n(0))\f(X_n(t))\right]\right)\nonumber\\
		&=2\left(\var\left(\f(X_n(0))\right)-\cov\left(\f(X_n(0)),\f(X_n(t))\right)\right)\nonumber\\
		&=2\left(\displaystyle\sum_{j=1}^{|\Omega_n|-1}|\langle \f,\psi_{n,j}\rangle|^2-\displaystyle\sum_{j=1}^{|\Omega_n|-1}|\langle \f,\psi_{n,j}\rangle|^2\exp(-t\theta_{n,j})\right)
	\end{align}
The equality in \eqref{eq:boolean-st1} follows from \eqref{eq:CTMC_E-V}, and Lemma \ref{lem:cov-ns}. The proof is completed here.
\end{proof}
\begin{rem}\label{rem:degeneracy}
	For any random variable $X\sim\pi_n$ and Boolean functions $\f_n:\Omega_n\rightarrow\{0,1\}$, following similar computations as in \eqref{eq:CTMC_E-V}, one can show
	\[\var(\f_n):=\var(\f_n(X))=\displaystyle\sum_{j=1}^{|\Omega_n|-1}|\langle \f_n,\psi_{n,j}\rangle|^2.\]
	\begin{itemize}
		\item Lemma \ref{lem:cov-ns} implies that
		\[0\leq \cov\left(\f_n(X_n(0)),\f_n(X_n(t)\right)\leq \displaystyle\sum_{j=1}^{|\Omega_n|-1}|\langle \f_n,\psi_{n,j}\rangle|^2=\var(\f_n).\]
		\item Lemma \ref{lem:boolean-st} implies that
		\[0\leq \mathbb{P}\left(\f_n(X_n(0))\neq \f_n(X_n(t))\right)\leq 2\displaystyle\sum_{j=1}^{|\Omega_n|-1}|\langle \f_n,\psi_{n,j}\rangle|^2=2\var(\f_n).\]
		Thus, if $\var(\f_n)\rightarrow 0$ then the sequence is both noise sensitive and noise stable with respect to $(X_n(t),\Omega_n,\pi_n)$. Such sequences are usually called \emph{degenerate}.
%		Thus, the sequence $\{\f_n\}_n$ is noise sensitive as well as noise stable with respect to $(X_n(t),\Omega_n,\pi_n)$ if $\var(\f_n)\rightarrow 0$. Such sequence is known as \emph{degenerate}.
	\end{itemize}
\end{rem}
We are now in a position to recall two results from \cite{Forsstrom_arXiv}, which will be used as alternative definitions for noise sensitivity and stability. The proofs are straightforward applications of Lemma \ref{lem:cov-ns} and Lemma \ref{lem:boolean-st}.
\begin{lem}{\cite[Proposition 1.4]{Forsstrom_arXiv}}\label{lem:spectral_NS}
	Consider the continuous-time Markov chain $(X_n(t),\Omega_n,\pi_n)$ from above, and Boolean functions $\f_n:\Omega_n\rightarrow\{0, 1\},\;n\geq 1$. The sequence $\{\f_n\}_n$ is noise sensitive with respect to $\{(X_n(t),\Omega_n,\pi_n)\}_n$ if and only if, for every integer $k > 0$,
	\[\lim_{n\rightarrow\infty}\sum_{j:\;\theta_{n,1}\leq\theta_{n,j}<k\theta_{n,1}}|\langle \f_n,\psi_{n,j}\rangle|^2=0.\]
\end{lem}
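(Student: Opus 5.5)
We prove the two implications separately, using \Cref{def:Spectral_NS+NStable} together with the covariance formula of Lemma \ref{lem:cov-ns}. Throughout, abbreviate $a_j:=|\langle \f_n,\psi_{n,j}\rangle|^2$ and $\theta_1:=\theta_{n,1}$, so that $t^n_{\rel}=1/\theta_1$. Then
\[
C_n(\alpha):=\cov\bigl(\f_n(X_n(0)),\f_n(X_n(\alpha t^n_{\rel}))\bigr)=\sum_{j\geq 1}a_j\,e^{-\alpha\theta_{n,j}/\theta_1}.
\]

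\emph{Only if.} Suppose that $C_n(\alpha)\to0$ for every $\alpha>0$. Fix an integer $k>0$ and take $\alpha=1$. Every $j$ with $\theta_1\leq\theta_{n,j}<k\theta_1$ satisfies $e^{-\theta_{n,j}/\theta_1}>e^{-k}$, and every term in the sum defining $C_n(1)$ is nonnegative. Dropping all other terms gives
\[
\sum_{j:\,\theta_1\leq\theta_{n,j}<k\theta_1}a_j\leq e^{k}\,C_n(1)\longrightarrow 0 .
\]

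\emph{If.} Fix $\alpha>0$ and $k$, and split the sum for $C_n(\alpha)$ into two parts.
\begin{itemize}
\item The terms with $\theta_{n,j}<k\theta_1$: each exponential factor is at most $1$, so this part is bounded by $\sum_{\theta_1\leq\theta_{n,j}<k\theta_1}a_j$, which tends to $0$ by hypothesis. (Note that every $j\geq1$ has $\theta_{n,j}\geq\theta_1$, so these are exactly the terms in the hypothesis.)
\item The terms with $\theta_{n,j}\geq k\theta_1$: each exponential factor is at most $e^{-\alpha k}$, and by Remark \ref{rem:degeneracy} we have $\sum_{j\ge1}a_j=\var(\f_n)\leq 1/4$, since $\f_n$ is Boolean. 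So this part is at most $e^{-\alpha k}/4$.
\end{itemize}
Hence $\limsup_n C_n(\alpha)\leq e^{-\alpha k}/4$ for every $k$. Letting $k\to\infty$ gives $C_n(\alpha)\to0$.

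The argument has no real obstacle. The only point needing care is the uniform bound on the high-frequency tail, which comes from the uniform variance bound for Boolean functions, together with choosing $k$ after $\alpha$ is fixed.
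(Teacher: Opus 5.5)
Your proof is correct and is the ``straightforward application'' of the covariance formula in Lemma \ref{lem:cov-ns} that the paper has in mind; the paper gives no proof of its own and defers to Forsstr\"{o}m's Proposition 1.4. Your two steps are the standard ones: the bound $\sum_{\theta_{n,1}\leq\theta_{n,j}<k\theta_{n,1}}|\langle \f_n,\psi_{n,j}\rangle|^2\leq e^{k}\,\cov\bigl(\f_n(X_n(0)),\f_n(X_n(t^n_{\rel}))\bigr)$ for the ``only if'' direction, and the low/high-frequency split controlled by the uniform variance bound for the ``if'' direction.
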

\begin{lem}{\cite[Proposition 1.6]{Forsstrom_arXiv}}\label{lem:spectral_NStable}
	A sequence $\{\f_n\}_n$ of Boolean functions, $\f_n:\Omega_n\rightarrow\{0, 1\}$, is noise stable with respect to $\{(X_n(t),\Omega_n,\pi_n)\}_n$ if and only if for all $\delta>0$ there is a positive integer $k$  such that
	\[\sup_n\sum_{j:\;\theta_{n,j}\geq k\theta_{n,1}}|\langle \f_n,\psi_{n,j}\rangle|^2<\delta.\]
\end{lem}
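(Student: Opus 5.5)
We argue directly from Lemma \ref{lem:boolean-st}, applied at time $t=\alpha t^n_{\rel}=\alpha/\theta_{n,1}$. Write $c_{n,j}:=|\langle \f_n,\psi_{n,j}\rangle|^2$. Then
\[
\mathbb{P}\left(\f_n(X_n(0))\neq \f_n(X_n(\alpha t^n_{\rel}))\right)=2\sum_{j\geq 1}c_{n,j}\left(1-e^{-\alpha\theta_{n,j}/\theta_{n,1}}\right).
\]
By Remark \ref{rem:degeneracy}, $\sum_{j\geq1}c_{n,j}=\var(\f_n)\leq \tfrac14$ uniformly in $n$, since $\f_n$ is Boolean. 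Both directions come from splitting this sum at the threshold $\theta_{n,j}=k\theta_{n,1}$.

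\emph{Spectral condition $\Rightarrow$ noise stability.} Fix $\delta>0$ and choose $k$ as in the hypothesis. First take the indices with $\theta_{n,1}\leq\theta_{n,j}<k\theta_{n,1}$. For these, $1-e^{-x}\leq x$ gives $1-e^{-\alpha\theta_{n,j}/\theta_{n,1}}\leq \alpha k$, so their contribution is at most $2\alpha k\var(\f_n)\leq \alpha k/2$. For the indices with $\theta_{n,j}\geq k\theta_{n,1}$, bound $1-e^{-(\cdot)}$ by $1$; their contribution is at most $2\delta$, uniformly in $n$. Hence $\sup_n\mathbb{P}(\cdots)\leq \alpha k/2+2\delta$. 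Letting $\alpha\to 0$ with $k$ fixed gives $\limsup_{\alpha\to0}\sup_n\mathbb{P}(\cdots)\leq 2\delta$. Since $\delta$ is arbitrary, the limit is $0$.

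\emph{Noise stability $\Rightarrow$ spectral condition.} Fix $\delta>0$. By \eqref{eq:Spectral_NStable-def}, choose $\alpha>0$ with
\[
\sup_n\mathbb{P}\left(\f_n(X_n(0))\neq \f_n(X_n(\alpha t^n_{\rel}))\right)<2(1-e^{-1})\delta,
\]
and set $k:=\lceil 1/\alpha\rceil$. For every $j$ with $\theta_{n,j}\geq k\theta_{n,1}$ we have $\alpha\theta_{n,j}/\theta_{n,1}\geq \alpha k\geq 1$, so $1-e^{-\alpha\theta_{n,j}/\theta_{n,1}}\geq 1-e^{-1}$. Dropping the remaining nonnegative terms from the formula above gives
\[
2(1-e^{-1})\sum_{j:\,\theta_{n,j}\geq k\theta_{n,1}}c_{n,j}\leq \mathbb{P}(\cdots)<2(1-e^{-1})\delta
\]
for every $n$. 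Hence $\sup_n\sum_{\theta_{n,j}\geq k\theta_{n,1}}c_{n,j}<\delta$. (The index $j=0$ never enters, since $\theta_{n,0}=0<k\theta_{n,1}$.)

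There is no real obstacle here. The only point needing care is uniformity in $n$: $k$ must depend only on $\delta$ (and on $\alpha$), never on $n$. This is guaranteed by the uniform bound $\var(\f_n)\leq 1/4$ and by the normalization of time by $\theta_{n,1}$.
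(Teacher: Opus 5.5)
Your proof is correct and follows the route the paper indicates. The paper defers to \cite{Forsstrom_arXiv} and calls the result a straightforward application of Lemma~\ref{lem:boolean-st}; you evaluate that formula at $t=\alpha/\theta_{n,1}$ and split the spectral sum at $\theta_{n,j}=k\theta_{n,1}$, with uniformity in $n$ coming from $\var(\f_n)\leq\frac14$.

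One cosmetic point about the converse direction. A strict inequality that holds for each $n$ does not by itself carry over to the supremum. You should instead read off $\sup_n\sum_{j:\,\theta_{n,j}\geq k\theta_{n,1}}|\langle\f_n,\psi_{n,j}\rangle|^2\leq\sup_n\mathbb{P}(\cdots)/(2(1-e^{-1}))<\delta$, which your choice of $\alpha$ already gives.
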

Lemma \ref{lem:spectral_NS} is analogous to the classical result on the hypercube case, stating that a function is noise sensitive if and only if most of its Fourier weight is on the ``high energy" eigenfunctions of the random walk operator. As a degenerate sequence is both noise sensitive and noise stable, \emph{we will only be interested in non-degenerate sequence of Boolean functions.}
%%%%%%%
%%%%%%
%%%%%%%

\subsection{Representation theory background}\label{subsec:repn._theory_bkground}
In this section, we recall the representation theory of a finite group. Let $V$ be a finite-dimensional complex vector space and GL$(V)$ be the group of all invertible linear operators from $V$ to itself under composition of linear mappings.  Unless otherwise stated, all the vector spaces considered in this paper are finite-dimensional. Elements of GL$(V)$ can be thought of as invertible matrices over $\mathbb{C}$. Let $G$ be a finite group. Let $I$ denote the identity element of GL$(V)$ (i.e. the identity operator on $V$) and $\Gid$ denote the identity element of $G$. A (complex) \emph{linear representation} $(\rho,V)$ of $G$ is a homomorphism $\rho:G\rightarrow \text{GL}(V)$, i.e., $\rho(g_1g_2)=\rho(g_1)\rho(g_2)\text{ for all }g_1,\;g_2\in G$. The representation space $V$ is called the \emph{$G$-module} corresponding to the representation $\rho$. Given $\rho$, we simply say $V$ is a representation of $G$. Two useful examples are given below:
\begin{itemize}
	\item Let $V$ be one-dimensional. Then, $\trivial:G\rightarrow \text{GL}(V)$ defined by $\trivial(g)\mapsto(v\mapsto v)$ for all $v\in V$ and $g\in G$, is a representation of $G$, known as the \emph{trivial representation} of $G$. %In fact, $V$ one-dimensional implies that GL$(V)$ is isomorphic to $\mathbb{C}^*$ (the group of non-zero complex numbers under usual multiplication). Therefore t
	Alternatively, the trivial representation $\trivial:G\rightarrow\mathbb{C}^*$ can also be defined by $\trivial(g)=1$ for all $g\in G$.
	\item Let $\mathbb{C}^G$ denote the complex vector space of all complex valued functions defined on $G$, i.e., $\mathbb{C}^G=\{\f:G\rightarrow \mathbb{C}\}$. Then, the \emph{right regular representation} $R:G\longrightarrow \text{GL}(\mathbb{C}^G)$ of $G$ is defined by $g\mapsto R_g$\footnote{From now on, for the right regular representation, we write $R_g$ to denote $R(g)$.} for $g\in G$, where  $R_g:\mathbb{C}^G\rightarrow \mathbb{C}^G$ is defined by
	\[(R_g\f)(x)=\f(xg)\text{ for all }x\in G\text{ and }\f\in \mathbb{C}^G.\]
	Setting a basis $\{\mathbb{1}_{\{g\}}:g\in G\}$,  we can think of $R_g$ as an invertible matrix over $\mathbb{C}$ of order $|G|\times|G|$. The \emph{left regular representation} can be defined in a similar fashion.
\end{itemize}
The dimension of the vector space $V$, denoted $d_{\rho}$, is called the \emph{dimension} of the representation $\rho$. The trace of the matrix $\rho(g)$ is said to be the \emph{character} value of $\rho$ at $g$, it is denoted by $\chi^{\rho}(g)$. %The character values are constants on conjugacy classes, i.e., the characters are class functions.
Recall that $\chi^{\rho}(\Gid)=d_{\rho}$ and $\chi^{\rho}(g^{-1})=\overline{\chi^{\rho}(g)}$, the complex conjugate of $\chi^{\rho}(g)$. A vector subspace $U$ of $V$ is said to be \emph{stable} (or \emph{invariant}) under $\rho$ if $\rho(g)\left(U\right)\subset U$ for all $g$ in $G$. \emph{Given a vector subspace $U$ of $V$ stable under $\rho$, there exists a complement $U^0$ of $U$ in $V$ which is stable under $\rho$} (\cite[Theorem 1]{Serre}). The representation $\rho$ is \emph{irreducible} if $V$ is non-trivial, and it does not has a non-trivial proper subspace which is stable under $\rho$; an example is the trivial representation defined above. Two representations $(\rho_1,V_1)$ and $(\rho_2,V_2)$ of $G$ are said to be \emph{isomorphic} if there exists an invertible linear map $T:V_1\rightarrow V_2$ such that $T\circ\rho_1(g)=\rho_2(g)\circ T$ for all $g\in G$. %i.e., the following diagram commutes for all $g\in G$:
%\[\begin{tikzcd}
	%	V_1\arrow{r}{\rho_1(g)}\arrow[swap]{d}{T} & V_1\arrow{d}{T}\\
	%	V_2\arrow{r}{\rho_2(g)} & V_2
	%\end{tikzcd}\]
\emph{Two representations of a finite group are isomorphic if and only if they have the same characters}. Let $H$ be a subgroup of $G$. The \emph{restriction} of the representation $\rho$ to $H$ is denoted by $\rho\downarrow^G_H$ and is defined by $\rho\downarrow^G_H(h):=\rho(h)$ for all $h\in H$. If $\chi^{\rho}$ is the character of $\rho$, then the character of the restriction 
$\rho\downarrow^G_H$ is denoted by $\chi^\rho\downarrow^G_H$.
The \emph{direct sum of the representations} $(\rho_1,V_1)$ and $(\rho_2,V_2)$, denoted $\rho_1\oplus\rho_2$, is the representation $(\rho_1\oplus\rho_2):G\rightarrow \text{GL}(V_1\oplus V_2)$ defined by, %$(\rho_1\oplus\rho_2)(g)(v_1\oplus v_2)=\rho_1(g)(v_1)\oplus\rho_2(g)(v_2)$ for $v_1\in V_1,v_2\in V_2$ and $g\in G$.
\[(\rho_1\oplus\rho_2)(g)(v_1\oplus v_2)=\rho_1(g)(v_1)\oplus\rho_2(g)(v_2) \text{ for }v_1\in V_1,v_2\in V_2\text{ and }g\in G.\]
The character of $\rho_1\oplus\rho_2$, denoted $\chi^{\rho_1\oplus\rho_2}$, given by $\chi^{\rho_1\oplus\rho_2}=\chi^{\rho_1}+\chi^{\rho_2}$. 
%\begin{lem}[{Schur's lemma \cite[Proposition 5]{Serre}}]\label{lem:ch2_Schurs_lemma}
	%	Let $(\rho_1,V_1)$ and $(\rho_2,V_2)$ be two \emph{(}complex\emph{)} irreducible representations of the finite group $G$. If a linear mapping $\Phi:V_1\rightarrow V_2$ satisfies $\rho_2(g)\circ\Phi=\Phi\circ\rho_1(g)$ for all $g\in G$, then
	%	\begin{enumerate}
		%		\item $\Phi=0$, when $\rho_1$ and $\rho_2$ are non-isomorphic.
		%		\item  $\Phi$ is a scalar multiple of the identity, when $V_1=V_2$ and $\rho_1=\rho_2$.
		%	\end{enumerate}
	%\end{lem}
One key result in this this context ensures that \emph{every $($complex$)$ linear representation is a direct sum of irreducible representations} (\cite[Theorem 1]{Serre}). Moreover this decomposition is unique up to isomorphism of representation. Another important result is the Schur's lemma, which says that \emph{Given any irreducible representation $(\rho^{\emph{irr}}, V^{\emph{irr}})$ of $G$, if a linear operator $T:V^{\emph{irr}}\rightarrow V^{\emph{irr}}$ satisfies $T\circ \rho^{\emph{irr}}(g)=\rho^{\emph{irr}}(g)\circ T$ for all $g\in G$, then $T$ is a scalar multiple of identity operator on $V^{\emph{irr}}$} \cite[Proposition 5]{Serre}. We now define an inner product on $\mathbb{C}^G$, the vector space of complex valued functions defined on $G$. The inner product $\langle\cdot\mid\cdot\rangle$ is defined by
\begin{equation}\label{eq:ChIP}
	\langle\f\mid\g\rangle:=\frac{1}{|G|}\displaystyle\sum_{x\in G}\f(x)\overline{\g(x)},\quad\f,\;\g\in \mathbb{C}^G.
\end{equation}
\begin{thm}[{\cite[Theorem 6]{Serre}}]\label{thm:ch2_irreducible_character_as_ONB}
	The characters corresponding to the non-isomorphic irreducible representations of $G$ form an $\langle\cdot\mid\cdot\rangle$-orthonormal basis of the complex vector space of the class functions of $G$.
\end{thm}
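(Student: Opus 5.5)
Since this theorem is quoted from Serre (Theorem 6), the plan is to recall the standard argument rather than anything specific to $S_n$. There are two steps. First I would show that the irreducible characters are orthonormal for $\langle\cdot\mid\cdot\rangle$. Then I would show that they span the space of class functions.

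\emph{Orthonormality.} Let $(\rho_1,V_1)$ and $(\rho_2,V_2)$ be irreducible. For any linear map $A:V_1\to V_2$, form the averaged operator
\[T_A=\frac{1}{|G|}\sum_{g\in G}\rho_2(g)\,A\,\rho_1(g^{-1}).\]
By construction $T_A$ intertwines $\rho_1$ and $\rho_2$. Schur's lemma then gives two cases.
\begin{itemize}
\item If $\rho_1\not\cong\rho_2$, the kernel and image of $T_A$ are invariant subspaces, so irreducibility forces $T_A=0$.
\item If $\rho_1=\rho_2=\rho$, then $T_A=\lambda I$. Taking traces gives $\lambda=\Tr(A)/d_\rho$.
\end{itemize}
Next take $A$ to be an elementary matrix unit and read off matrix entries. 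This yields the Schur orthogonality relations for the matrix coefficients of $\rho_1$ and $\rho_2$. Summing over the diagonal entries, and using $\chi(g^{-1})=\overline{\chi(g)}$, gives $\langle\chi^{\rho_1}\mid\chi^{\rho_2}\rangle=0$ in the first case and $1$ in the second.

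\emph{Spanning.} Characters are class functions because $\Tr(\rho(hgh^{-1}))=\Tr(\rho(g))$. Suppose a class function $\f$ is orthogonal to every $\overline{\chi^\rho}$. For each irreducible $\rho$, set
\[T_\rho=\sum_{g\in G}\f(g)\rho(g).\]
Since $\f$ is a class function, $T_\rho$ commutes with every $\rho(h)$. By Schur's lemma $T_\rho=\lambda I$, and taking traces gives $\lambda=\frac{|G|}{d_\rho}\langle \f\mid\overline{\chi^\rho}\rangle=0$. So $\sum_{g}\f(g)\rho(g)=0$ for every irreducible $\rho$.

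Because every representation decomposes into irreducibles, the same identity holds for the regular representation. Apply it to $\mathbb{1}_{\{\Gid\}}$: the image is $\sum_g \f(g)\mathbb{1}_{\{g^{-1}\}}$ (up to the left/right convention), and this vanishes. Hence $\f\equiv 0$, and the characters span the class functions.

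The only delicate point is the spanning step. There one has to be careful with conjugates and with the left versus right regular representation convention. The rest is routine linear algebra once Schur's lemma and complete reducibility, both recalled above, are available.
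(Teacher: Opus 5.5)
Your proposal is correct and is the standard argument behind Serre's Theorem 6, which is all the paper relies on since it only cites the result: Schur-lemma averaging for orthonormality, then the homothety $\sum_{g}\f(g)\rho(g)$ and the regular representation for spanning. The one step to make explicit is the passage from ``every class function orthogonal to all $\overline{\chi^\rho}$ vanishes'' to ``the $\chi^\rho$ span''; you can either note that $\overline{\chi^\rho}$ is the character of the irreducible dual representation, so $\{\overline{\chi^\rho}\}$ is just a relabeling of $\{\chi^\rho\}$, or run your argument on $\overline{\f}$ for a class function $\f$ orthogonal to every $\chi^\rho$.
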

Let $G$ be a finite group and $\widehat{G}$ be the set of all non-isomorphic irreducible representations of $G$. Theorem \ref{thm:ch2_irreducible_character_as_ONB} ensures that, the number of irreducible representations of a finite group is equal to the number of its conjugacy classes. If $V\cong m_1W_1\oplus\cdots\oplus m_{\ell}W_{\ell}$ is the decomposition of the representation $(\rho,V)$ into irreducible representations of $\rho$, then $\langle\chi^{\rho}\mid\chi^{\rho}\rangle=m_1^2+\cdots+m_{\ell}^2$. Moreover, if $\chi^i$ denotes the irreducible character of $W_i,\;1\leq i\leq\ell$, then $m_i=\langle\chi\mid\chi^i\rangle$ is called the multiplicity of $W_i$ in the decomposition of $V$. \emph{The regular $($true for both left and right$)$ representation of $G$ decomposes into irreducible representations with multiplicity equal to their respective dimensions} \cite[p. 18, Corollary 1]{Serre}. Thus we have,
\begin{equation}\label{eq:Group_alg._decom.}
	\mathbb{C}^G\cong\underset{\rho\in\widehat{G}}{\oplus}\;d_{\rho}W^{\rho},
\end{equation}
where $W^{\rho}$ is the irreducible $G$-module corresponding to $\rho\in\widehat{G}$, and $d_{\rho}=\dim(W^{\rho})$. Also, equating the dimension in \eqref{eq:Group_alg._decom.}, we obtain $|G|=\displaystyle\sum_{\rho\in \widehat{G}}d_{\rho}^2$.
%\begin{thm}[{Frobenius reciprocity \cite[Theorem 1.12.6]{Sagan}}]\label{thm:ch2_Frobenious_Reciprocity}
	%	Let $H$ be a subgroup of the finite group $G$ and suppose that $\psi$ and $\chi$ are characters of $H$ and $G$ respectively. Then
	%	\[\langle\psi\uparrow_H^G,\chi\rangle=\langle\psi,\chi\downarrow_H^G\rangle,\]
	%	where the left inner product is calculated in $G$ and the right one in $H$.
	%\end{thm}
\subsection{Fourier analysis and continuous-time random walks on finite groups}\label{sec:non-comm._Fou._an.}
We will write down an orthonormal basis of $\mathbb{C}^G$, the set of all complex valued functions defined on $G$. First we recall some definition and standard results on the Fourier analysis of $G$. Let $\f,\g:G\rightarrow\mathbb{C}$ be two functions on the finite group $G$. The \emph{convolution} of $\f$ and $\g$, denoted  $\f*\g$, is defined by \[(\f*\g)(x):=\displaystyle\sum_{y\in G}\f(xy^{-1})\g(y).\]
Let $(\rho,V)$ be  a (complex) linear representation of $G$. Then the \emph{Fourier transform} of $\f$ at $\rho$, denoted $\widehat{\f}$, is defined by 
\[\widehat{\f}(\rho):=\displaystyle\sum_{x\in G}\f(x)\rho(x).\]
For example, recall the right regular representation $R$ of $G$. Given any function $\h:G\rightarrow\mathbb{C}$, the operator $\widehat{\h}(R)$ is defined by
	\[(\widehat{\h}(R)\f)(x)=\sum_{u\in G}\f(xu)\h(u)\text{ for all }x\in G\text{ and }\f\in \mathbb{C}^G.\]
It can be easily seen that $\widehat{(\f*\g)}(\rho)=\widehat{\f}(\rho)\widehat{\g}(\rho)$ for all $\f,\g\in\mathbb{C}^G$. The Plancherel formulae (cf. \cite[Theorem 4.1]{D1}) are given by
\begin{align}
	\f(x)&=\frac{1}{|G|}\sum_{\rho\in\widehat{G}}d_{\rho}\Tr\left(\rho(x^{-1})\widehat{\f}(\rho)\right),\label{eq:Inv_Fourier_transf}\\
	\sum_{x\in G}\f(x^{-1})\g(x)&=\frac{1}{|G|}\sum_{\rho\in\widehat{G}}d_{\rho}\Tr\left(\widehat{\f}(\rho)\widehat{\g}(\rho)\right).\label{eq:Plancherel formula}
\end{align}
To write down an $\langle\;|\;\rangle$ -- orthonormal basis of $\mathbb{C}^G$, we fix some notation first. Throughout this article, we use the notation $\delta_{*,*}$ to denote the \emph{Kronecker} symbol. Recall that $\widehat{G}$ is the set of all non-isomorphic irreducible representations of $G$. For every $\rho\in\widehat{G}, \; W^{\rho}$ denotes the associated irreducible $G$-module, and $d_{\rho}=\dim(W^{\rho})$. Let $\mathscr{S}_{\rho}$ be a fixed set of cardinality $d_{\rho}$. Since $\dim(W^{\rho}) = d_{\rho}$, every basis of $W^{\rho}$ can be indexed by the elements of $\mathscr{S}_{\rho}$. Throughout this paper, we shall use $\mathscr{S}_{\rho}$ as the indexing set for basis vectors of $W^{\rho}$. Let us consider a basis $\mathcal{B}^{\rho}:=\{v_{\alpha}:\alpha\in\mathscr{S}_{\rho}\}$ of $W^{\rho}$, such that the representation matrices $\big[\rho(g)\big]_{\mathcal{B}^{\rho}},\;g\in G$, are unitary with respect to $\mathcal{B}^{\rho}$; to ensure existence of such basis see \cite[p.6]{Serre}. \emph{In this paper, we always work with bases with respect to which all the representation matrices are unitary}. Let us define $d_{\rho}^2$ linear operators $E_{\mathcal{B}{\rho}}^{\alpha\beta}:W^{\rho}\rightarrow W^{\rho},\;\alpha,\beta\in\mathscr{S}_{\rho}$. Explicitly, for $\alpha,\beta\in\mathscr{S}_{\rho},\; E_{\mathcal{B}{\rho}}^{\alpha\beta}:W^{\rho}\rightarrow W^{\rho}$ is defined on the basis elements by
\[E^{\alpha\beta}_{\mathcal{B}_{\rho}}(v_{\gamma})=\delta_{\beta,\gamma}v_{\alpha}\quad\text{ for }\quad\gamma\in\mathscr{S}_{\rho},\]
 i.e., the matrix, denoted $\big[E^{\alpha\beta}_{\mathcal{B}_{\rho}}\big]_{\mathcal{B}_{\rho}}$, of the operator $E^{\alpha\beta}_{\mathcal{B}_{\rho}}$ with respect to basis $\mathcal{B}_{\rho}$ is the $d_{\rho}\times d_{\rho}$ matrix having only non-zero entry $1$ at row $\alpha$ and column $\beta$.
 
 For every $\rho\in\widehat{G}$, we define $d_{\rho}^2$ functions $\psi^{\alpha\beta}_{\mathcal{B}_{\rho}},\;\alpha,\beta\in\mathcal{S}_{\rho}$, through their Fourier transforms at the irreducible representations of $G$ by 
 \begin{equation}\label{eq:FT_of_ONB}
 	\widehat{\psi}^{\alpha\beta}_{\mathcal{B}_{\rho}}(\lambda):=\delta_{\rho,\lambda}\frac{|G|}{\sqrt{d_{\rho}}}E^{\alpha\beta}_{\mathcal{B}_{\rho}}\;\text{ for all }\lambda\in\widehat{G},
 \end{equation}
 thanks to \eqref{eq:Inv_Fourier_transf}. More explicitly, for $\alpha,\beta\in\mathscr{S}_{\rho}$,
 \begin{equation}\label{eq:ONFunctions}
 	\psi^{\alpha\beta}_{\mathcal{B}_{\rho}}(x):=\frac{1}{|G|}\sum_{\lambda\in\widehat{G}}d_{\lambda}\Tr\left(\lambda(x^{-1})\delta_{\rho,\lambda}\frac{|G|}{\sqrt{d_{\rho}}}E^{\alpha\beta}_{\mathcal{B}_{\rho}}\right)=\sqrt{d_{\rho}}\Tr\left(\rho(x^{-1})E^{\alpha\beta}_{\mathcal{B}_{\rho}}\right).
 \end{equation}
 \begin{rem}
 	The functions listed in \eqref{eq:ONFunctions} are real valued if, for every $x\in G$, the matrix of the operator $\rho(x)$ with respect to $\mathcal{B}_{\rho}$ has real entries.
 \end{rem}
 \begin{lem}\label{lem:ONB_for_G}
 	Recalling the notations from the precdding paragraphs, the set
 	\begin{equation}\label{eq:ONB_for_G}
 		\underset{\rho\in\widehat{G}}{\bigcup}\big\{\psi^{\alpha\beta}_{\mathcal{B}_{\rho}}:\alpha,\beta\in\mathscr{S}_\rho\big\}
 	\end{equation}
 	forms an $\langle\;|\;\rangle$ -- orthonormal basis of $\mathbb{C}^G$.
 \end{lem}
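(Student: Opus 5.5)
We argue by counting dimensions and proving orthonormality. The family in \eqref{eq:ONB_for_G} contains $\sum_{\rho\in\widehat{G}}d_\rho^2=|G|$ functions, which is exactly $\dim\mathbb{C}^G$ by the identity that follows \eqref{eq:Group_alg._decom.}. Any orthonormal family is linearly independent, so orthonormality alone will give a basis. The work therefore reduces to computing $\langle\psi^{\alpha\beta}_{\mathcal{B}_\rho}\mid\psi^{\gamma\delta}_{\mathcal{B}_\lambda}\rangle$.

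We compute this inner product with the Plancherel formula \eqref{eq:Plancherel formula}. Set $\g(x):=\overline{\psi^{\gamma\delta}_{\mathcal{B}_\lambda}(x)}$ and $\f(x):=\psi^{\alpha\beta}_{\mathcal{B}_\rho}(x^{-1})$. Then
\[
|G|\,\langle\psi^{\alpha\beta}_{\mathcal{B}_\rho}\mid\psi^{\gamma\delta}_{\mathcal{B}_\lambda}\rangle=\sum_{x}\f(x^{-1})\g(x)=\frac{1}{|G|}\sum_{\mu\in\widehat{G}}d_\mu\Tr\bigl(\widehat{\f}(\mu)\widehat{\g}(\mu)\bigr).
\]
We need the Fourier transforms of $\f$ and $\g$, which we read off from \eqref{eq:ONFunctions} using unitarity. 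Since $\rho(x^{-1})=\rho(x)^*$, we get $\psi^{\alpha\beta}_{\mathcal{B}_\rho}(x)=\sqrt{d_\rho}\,\rho(x^{-1})_{\beta\alpha}=\sqrt{d_\rho}\,\overline{\rho(x)_{\alpha\beta}}$. Hence $\g(x)=\sqrt{d_\lambda}\,\lambda(x)_{\gamma\delta}$ and $\f(x)=\sqrt{d_\rho}\,\rho(x)_{\beta\alpha}$.

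It is cleaner to avoid a double application of Plancherel and instead use the Schur orthogonality relations for unitary matrix coefficients:
\[
\sum_{x\in G}\overline{\rho(x)_{\alpha\beta}}\,\lambda(x)_{\gamma\delta}=\frac{|G|}{d_\rho}\,\delta_{\rho,\lambda}\,\delta_{\alpha,\gamma}\,\delta_{\beta,\delta}.
\]
These can be derived from the Schur lemma recalled in the text. Take the operator $T=\sum_x\lambda(x)E\rho(x)^{-1}$ for an arbitrary matrix unit $E$. This $T$ intertwines $\rho$ and $\lambda$. When $\rho\neq\lambda$ it vanishes, by the non-isomorphic part of Schur's lemma; that part follows from the stated version because the kernel and image of $T$ are invariant subspaces. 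When $\rho=\lambda$, $T$ is a scalar, and the scalar is fixed by taking traces. Alternatively, the relation follows directly from \eqref{eq:Plancherel formula} applied to $\f$ and $\g$ above, once we note that $\widehat{\g}(\mu)$ is supported on $\mu$ equivalent to $\lambda$. The same reasoning shows that the Fourier transform in \eqref{eq:FT_of_ONB} is consistent with \eqref{eq:ONFunctions}.

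Plugging the orthogonality relation into the inner product gives
\[
\langle\psi^{\alpha\beta}_{\mathcal{B}_\rho}\mid\psi^{\gamma\delta}_{\mathcal{B}_\lambda}\rangle=\frac{\sqrt{d_\rho d_\lambda}}{|G|}\cdot\frac{|G|}{d_\rho}\,\delta_{\rho,\lambda}\delta_{\alpha,\gamma}\delta_{\beta,\delta}=\delta_{\rho,\lambda}\delta_{\alpha,\gamma}\delta_{\beta,\delta},
\]
which is orthonormality. Combined with the dimension count, this proves the lemma.

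The main obstacle is justifying the orthogonality relations cleanly. In particular, we must check that distinct elements of $\widehat{G}$ give vanishing cross terms; here we use that non-isomorphic irreducibles admit no nonzero intertwiner. We must also track the index transposition and complex conjugation that come from $\rho(x^{-1})=\rho(x)^*$.
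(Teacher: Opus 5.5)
Your argument is correct, but it proves orthonormality by a different route from the paper's.

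\textbf{The paper's route.} The paper never writes out matrix coefficients. It treats \eqref{eq:FT_of_ONB} as the definition of $\psi^{\alpha\beta}_{\mathcal{B}_{\rho}}$ and pairs it with $\check{\psi}^{\alpha'\beta'}_{\mathcal{B}_{\rho'}}(x)=\overline{\psi^{\alpha'\beta'}_{\mathcal{B}_{\rho'}}(x^{-1})}$. It uses unitarity only to get $\widehat{\check{\psi}}(\lambda)=\widehat{\psi}(\lambda)^{\dagger}$. The Plancherel formula \eqref{eq:Plancherel formula} then collapses the inner product to $\delta_{\rho,\rho'}\Tr\bigl(E^{\alpha\beta}E^{\beta'\alpha'}\bigr)=\delta_{\rho,\rho'}\delta_{\alpha,\alpha'}\delta_{\beta,\beta'}$.

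\textbf{Your route.} You work on the group side with the explicit formula \eqref{eq:ONFunctions}, rewriting $\psi^{\alpha\beta}_{\mathcal{B}_{\rho}}(x)=\sqrt{d_{\rho}}\,\overline{\rho(x)_{\alpha\beta}}$. You then derive the Schur orthogonality relations from Schur's lemma via the averaged intertwiner $\sum_x\lambda(x)E\rho(x)^{-1}$. You correctly supply the non-isomorphic case of Schur's lemma, which the paper does not state, by kernel/image invariance.

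\textbf{What each buys.} The paper's proof is shorter and index-free. However, it rests on the cited Plancherel and inversion formulas, and on the consistency of \eqref{eq:FT_of_ONB} with \eqref{eq:ONFunctions}, which is itself Fourier inversion. Yours is self-contained from Schur's lemma and unitarity, and in effect proves the orthogonality that underlies Plancherel. The price is the transposition and conjugation bookkeeping coming from $\rho(x^{-1})=\rho(x)^{*}$, which you handle correctly.

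\textbf{A small caution.} Your ``alternatively'' remark is not needed and is slightly off. The function $\g(x)=\sqrt{d_\lambda}\,\lambda(x)_{\gamma\delta}$ has Fourier transform supported on the dual of $\lambda$, not on $\lambda$. These coincide for $S_n$, whose characters are real, but not for a general finite group $G$. Moreover, establishing that support is itself an orthogonality statement, so the remark would be circular. Simply drop it, together with the opening Plancherel display, which your argument never uses.
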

 \begin{proof}
 	The set given in \eqref{eq:ONB_for_G} contains $\sum_{\rho\in\widehat{G}}d_{\rho}^2=|G|$ vectors. As $|G|$ is the dimension of the vector space $\mathbb{C}^G$, it is enough to show that \eqref{eq:ONB_for_G} is an orthonormal set of vectors in $\mathbb{C}^G$. 
 	
 	For $\rho\in\widehat{G}$, recall the bases $\mathcal{B}_{\rho}$ of $W^{\rho}$. Given $\f\in\mathbb{C}^G$, if we define $\check{\f}:G\rightarrow \mathbb{C}$ by $\check{\f}(x)=\overline{\f(x^{-1})}$, then the matrix of the Fourier transform of $\check{\f}$ at the irreducible representation $\rho$ with respect to the basis $\mathcal{B}_{\rho}$ is given by
 	\begin{equation}\label{eq:ONB_for_G1}
 		\Big[\widehat{\check{\f}}(\rho)\Big]_{\mathcal{B}_{\rho}}:=\Big[\widehat{\f}(\rho)\Big]_{\mathcal{B}_{\rho}}^{\dagger}\text{, the conjugate-transpose  of the matrix }\Big[\widehat{\f}(\rho)\Big]_{\mathcal{B}_{\rho}}.
 	\end{equation}
 	This follows from the fact that $\big[\rho(g)\big]_{\mathcal{B}^{\rho}}$ is unitary matrix for every $g\in G$.
 	
 	Let $\rho,\rho'\in\widehat{G}$, recall that $\mathscr{S}_{\rho}$ (respectively, $\mathscr{S}_{\rho'}$) denotes the indexing set of the basis $\mathcal{B}^{\rho}$ (respectively, $\mathcal{B}^{\rho'}$) of $W^{\rho}$ (respectively, $W^{\rho'}$). Then, for $\alpha,\beta\in\mathscr{S}_{\rho}$ and $\alpha',\beta'\in\mathscr{S}_{\rho}$, we compute
 	\begin{align*}
 		\langle\psi^{\alpha\beta}_{\mathcal{B}_{\rho}}\mid \psi^{\alpha'\beta'}_{\mathcal{B}_{\rho'}}\rangle&=\frac{1}{|G|}\sum_{x\in G}\psi^{\alpha\beta}_{\mathcal{B}_{\rho}}(x)\overline{\psi^{\alpha'\beta'}_{\mathcal{B}_{\rho'}}(x)}\\
 		&=\frac{1}{|G|}\sum_{x\in G}\psi^{\alpha\beta}_{\mathcal{B}_{\rho}}(x)\check{\psi}^{\alpha'\beta'}_{\mathcal{B}_{\rho'}}(x^{-1}),\quad\text{ using }\check{\psi}^{\alpha'\beta'}_{\mathcal{B}_{\rho'}}(x):=\overline{\psi^{\alpha'\beta'}_{\mathcal{B}_{\rho'}}(x^{-1})}\\
 		&=\frac{1}{|G|^2}\sum_{\lambda\in \widehat{G}}d_{\lambda}\Tr\left(\widehat{\psi}^{\alpha\beta}_{\mathcal{B}_{\rho}}(\lambda)\widehat{\check{\psi}}^{\alpha'\beta'}_{\mathcal{B}_{\rho'}}(\lambda)\right),\text{ using }\eqref{eq:Plancherel formula}
 	\end{align*}
 	We have seen that \eqref{eq:ONB_for_G1} implies $\Big[\widehat{\check{\psi}}^{\alpha'\beta'}_{\mathcal{B}_{\rho'}}(\lambda)\Big]_{\mathcal{B}_{\rho'}}=\Big[\widehat{\psi}^{\alpha'\beta'}_{\mathcal{B}_{\rho'}}(\lambda)\Big]_{\mathcal{B}_{\rho'}}^{\dagger}$. Therefore, using \eqref{eq:FT_of_ONB}, we have
 	\begin{align*}
 		\langle\psi^{\alpha\beta}_{\mathcal{B}_{\rho}}\mid \psi^{\alpha'\beta'}_{\mathcal{B}_{\rho'}}\rangle&=\frac{1}{|G|^2}\sum_{\lambda\in \widehat{G}}d_{\lambda}\Tr\left(\delta_{\rho,\lambda}\frac{|G|}{\sqrt{d_{\rho}}}\big[E^{\alpha\beta}_{\mathcal{B}_{\rho}}\big]_{\mathcal{B}_{\rho}}\cdot \delta_{\rho',\lambda}\frac{|G|}{\sqrt{d_{\rho'}}}\big[E^{\beta'\alpha'}_{\mathcal{B}_{\rho'}}\big]_{\mathcal{B}_{\rho'}}\right)\\
 		&=\delta_{\rho,\rho'}\Tr\left(\big[E^{\alpha\beta}_{\mathcal{B}_{\rho}}\big]_{\mathcal{B}_{\rho}}\cdot \big[E^{\beta'\alpha'}_{\mathcal{B}_{\rho'}}\big]_{\mathcal{B}_{\rho'}}\right)=\delta_{\rho,\rho'}\delta_{\alpha,\alpha'}\delta_{\beta,\beta'}.
 	\end{align*}
 	This completes the proof.
 \end{proof}
A \emph{continuous-time random walk on a finite group $G$ generated by a  rate function $\mu$} is the continuous-time Markov chain with state space $G$, such that the transition from $x$ to $y$ happens with rate $\mu(x^{-1}y)$, $x,y\in G$.  Therefore,  the infinitesimal generator is the matrix
\[Q:=\left(\mu(x^{-1}y)-\delta_{x,y}\sum_{g\in G}\mu(g)\right)_{x,y\in G},\]
where $\delta_{*,*}$ is the \emph{Kronecker delta}. It can be easily seen that $-Q$ is the matrix of the operator 
\[\left(\displaystyle\sum_{g\in G}\mu(g)\right)I-\widehat{\mu}(R)=\displaystyle\sum_{g\in G}\mu(g)\left(I-R_g\right)\]
on $\mathbb{C}^G$ with respect to the basis $\{\mathbb{1}_{\{g\}}:g\in G\}$. The random walk is irreducible \emph{if and only if the support of $\mu$ generates $G$} \cite[Proposition 2.3]{S}. Also, the uniform distribution is the \emph{stationary} distribution of the random walk. Thus, the inner products \eqref{eq:MCIP} and \eqref{eq:ChIP} coincide for continuous-time random walks on a finite groups. From now on, we denote this inner product using notation $\langle\cdot,\cdot\rangle$. 
We now prove a lemma that we use later in this paper.
\begin{lem}\label{lem:FT_of_const._1_fn}
	Recall that $\widehat{G}$ is the set of equivalence classes of irreducible representations of the finite group $G$. Let $\mathbf{1}:G\rightarrow\{0,1\}$ be defined by $\mathbf{1}(g)=1$ for all $g\in G$, and $\delta_{*,*}$ denotes the Kronecker delta. Then
	\[\widehat{\mathbf{1}}(\sigma)=\frac{|G|}{d_{\sigma}}\;\delta_{\sigma,\trivial}\;I_{\sigma},\quad\sigma\in\widehat{G}.\]
	Here, $I_{\sigma}$ denotes the identity operator on the irreducible $G$-module indexed by $\sigma$, and $d_{\sigma}$ denotes its dimension.
\end{lem}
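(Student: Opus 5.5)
The plan is to compute $\widehat{\mathbf{1}}(\sigma)=\sum_{g\in G}\sigma(g)$ directly and identify it with a scalar operator via Schur's lemma, treating the trivial and non-trivial cases separately. Write $T:=\widehat{\mathbf{1}}(\sigma)$, an operator on the irreducible module $W^{\sigma}$.

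\textbf{Step 1: $T$ is an intertwiner.} For any $h\in G$, reindexing the sum by $g\mapsto hg$ gives
\[
\sigma(h)T=\sum_{g\in G}\sigma(hg)=\sum_{g\in G}\sigma(g)=T.
\]
Similarly, reindexing by $g\mapsto gh$ gives $T\sigma(h)=T$. In particular $\sigma(h)T=T\sigma(h)$ for all $h$. By Schur's lemma (recalled above), $T=c\,I_{\sigma}$ for some scalar $c\in\mathbb{C}$.

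\textbf{Step 2: computing $c$.} Taking traces,
\[
c\,d_{\sigma}=\Tr(T)=\sum_{g\in G}\chi^{\sigma}(g)=|G|\,\langle \chi^{\sigma}\mid\chi^{\trivial}\rangle .
\]
By Theorem \ref{thm:ch2_irreducible_character_as_ONB}, this inner product equals $\delta_{\sigma,\trivial}$. Hence
\[
c=\frac{|G|}{d_{\sigma}}\,\delta_{\sigma,\trivial},
\]
which is the claimed formula. In the trivial case $d_{\sigma}=1$, so this reads $c=|G|$, consistent with summing $1$ over $G$.

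An alternative to the character computation in Step 2 for $\sigma\neq\trivial$ uses Step 1 directly: $\sigma(h)T=T$ for all $h$ means every vector in the image of $T$ is $G$-fixed. The fixed subspace is invariant, so by irreducibility of a non-trivial $\sigma$ it is $0$, and therefore $T=0$.

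There is no real obstacle here; the only care needed is checking that the reindexing is valid and that the trace identity matches the normalization of $\langle\cdot\mid\cdot\rangle$ in \eqref{eq:ChIP}.
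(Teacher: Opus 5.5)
Your proof is correct and follows the paper's argument. You show that $\widehat{\mathbf{1}}(\sigma)$ commutes with every $\sigma(h)$, use Schur's lemma to get $\widehat{\mathbf{1}}(\sigma)=c\,I_{\sigma}$, and find $c$ by taking traces and applying the orthonormality of irreducible characters (Theorem \ref{thm:ch2_irreducible_character_as_ONB}); your additional fixed-vector argument for $\sigma\neq\trivial$ is also valid but not needed.
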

\begin{proof}
	Let $(\rho_{\sigma},V^{\sigma})$ denote the irreducible representation indexed by $\sigma$. Then, 
	\[\widehat{\mathbf{1}}(\sigma):=\widehat{\mathbf{1}}(\rho_{\sigma})=\sum_{h\in G}\rho_{\sigma}(h)\quad\text{ implies that }\quad\widehat{\mathbf{1}}(\sigma)\circ\rho_{\sigma}(g)=\rho_{\sigma}(g)\circ\widehat{\mathbf{1}}(\sigma)\text{ for all }g\in G.\]
	Thus, by Schur's lemma, we have $\widehat{\mathbf{1}}(\rho_{\sigma})=c I_{\sigma}$ for some $c\in\mathbb{C}$. The value of $c$ can be obtained by equating the traces of $\widehat{\mathbf{1}}(\rho_{\sigma})g$ and $c I_{\sigma}$, i.e., $\displaystyle\sum_{h\in G}\chi^{\sigma}(h)=c d_{\sigma}$, and Theorem \ref{thm:ch2_irreducible_character_as_ONB}.
\end{proof}
\section{Background on the symmetric group representation}\label{sec:Sn_repn}
In this section, we start with the necessary combinatorial backgrounds for describing the representation theory of the symmetric group $S_n$, then we recall the representation theory of $S_n$; in particular, the Vershik-Okounkov approach to the representation theory of $S_n$. We mainly followed the references \cite{Sagan,VO}, for this part. 
\subsection{Combinatorial frameworks for the representation theory of $S_n$}A \emph{partition} $\lambda$ of a positive integer $n$ is a weakly decreasing finite sequence $(\lambda_1,\cdots,\lambda_{\ell})$ of positive integers such that $\displaystyle\sum_{i=1}^{\ell}\lambda_i=n$. We write $\lambda\vdash n$ to mean $\lambda$ is a partition of $n$. The set of all partitions of $n$ is denoted by $\Par(n)$. The partition $\lambda$ can be pictorially visualised as a left-justified arrangement of $\ell$ rows of boxes with $\lambda_i$ boxes in the $i$th row (English notation). This pictorial arrangement of boxes is known as the \emph{Young diagram} of $\lambda$. For example there are five partitions of the positive integer $4$ viz. (4), (3,1), (2,2), (2,1,1) and (1,1,1,1). Young diagrams corresponding to the partitions of $4$ are given in Figure \ref{fig:ch2_yng_diag_with_4_boxes}. We use the same notation $\lambda$ to denote partition and Young diagram both. It will be clear from the context whether a partition or a Young diagram is meant. %The set of all Young diagrams (there is a unique Young diagram with zero boxes) is denoted by $\y$ and the set of all Young diagrams with $n$ boxes is denoted by $\yn$. 
\begin{figure}[h]
		\centering
		$\begin{array}{cclll}
		\yng(4)&\hspace{0.5cm}\yng(3,1)& \hspace{0.5cm}\yng(2,2) & \hspace{0.5cm}\yng(2,1,1) & \hspace{0.75cm}\yng(1,1,1,1)\\
			(4)\;&\;\quad(3,1)&\quad\;(2,2)&\quad(2,1,1)&\;(1,1,1,1)
		\end{array}$
		\caption{Young diagrams with $4$ boxes.}\label{fig:ch2_yng_diag_with_4_boxes}
\end{figure}
A \emph{Young tableau} of shape $\lambda$ (or \emph{$\lambda$-tableau}) is obtained by filling (bijectively) the numbers $1,\dots,n$ in the boxes of the Young diagram of $\lambda$. A $\lambda$-tableau is \emph{standard} if the entries in its boxes increase from left to right along rows and from top to bottom along columns. The set of all standard Young tableaux of a given shape $\lambda$ is denoted by $\std(\lambda)$ and the number of standard Young tableaux of shape $\lambda$ is denoted by $f^{\lambda}$. For example, all standard Young tableaux of shape $(3,1)$ are listed in Figure \ref{fig:ch2_yng_tab_of_shape_(3,1)}.
\begin{figure}[h]
		\centering
		$\begin{array}{ccc}
			\young({{\substack{1}}}{{\substack{2}}}{{\substack{3}}},{{\substack{4}}}) &\quad\young({{\substack{1}}}{{\substack{2}}}{{\substack{4}}},{{\substack{3}}}) & \quad \young({{\substack{1}}}{{\substack{3}}}{{\substack{4}}},{{\substack{2}}})
		\end{array}$
		\caption{Standard Young tableaux of shape $(3,1)$.}\label{fig:ch2_yng_tab_of_shape_(3,1)}
\end{figure}
The \emph{content} of a box in row $i$ and column $j$ of a Young diagram is the integer $j-i$. The \emph{diagonal index} of a partition $\lambda\vdash n$, denoted Diag$(\lambda)$, defined as the sum of the contents of all boxes in $\lambda$, i.e., \[\text{Diag}(\lambda)=\displaystyle\sum_{\substack{\text{the box at row }i,\\\text{ column }j\text{ is present in }\lambda}}(j-i).\]
Given $T\in\std(\lambda)$, let $b_T(u)$ denote the box in $T$, where the number $u$ resides. Also $c(b_T(u))$ denotes the content of the box $b_T(u)$, $1\leq u\leq n$. Given a Young diagram $\lambda$, its \emph{conjugate} $\lambda^{\prime}$ is obtained by reflecting $\lambda$ with respect to the diagonal consisting of boxes with content $0$. A Young diagram $\lambda$ is \emph{self-conjugate} if $\lambda^{\prime}=\lambda$. We are now in a position to discuss the representation theory of the symmetric group $S_n$.
\subsection{Representation theory of $S_n$ and orthonormal basis for $\mathbb{C}^{S_n}$}
Recall that $S_n$ is the group of all bijections on $[n]$. Elements of $S_n$ are known as \emph{permutations} and the group operation is the composition of permutations. Unless otherwise stated the field of scalars is considered to be $\mathbb{C}$. \emph{The irreducible representations of $S_n$ are indexed by the partitions of $n$} \cite[Theorem 2.4.6]{Sagan}. The irreducible $S_n$-modules are called the \emph{Specht modules}. The Specht module indexed by $\lambda\vdash n$ is denoted by $V^{\lambda}$, and the associated irreducible character is denoted by $\chi^{\lambda}$. \emph{The basis vectors of $V^{\lambda}$ are indexed by the standard Young tableaux of shape $\lambda$} \cite[Theorem 2.5.2]{Sagan}, and hence $\dim(V^{\lambda})=f^{\lambda}$. Below we describe a scheme to obtain an orthonormal basis of $\mathbb{C}^{S_n}$ (of all complex valued functions defined on $S_n$).

If we have orthonormal bases $\mathcal{B}_{\lambda}$ of $V^{\lambda}$ for every $\lambda\vdash n$, then Lemma \ref{lem:ONB_for_G} provides that
	\begin{equation}\label{eq:ONB_for_Sn}
	\underset{\lambda\in\widehat{S_n}}{\bigcup}\big\{\psi^{ST}_{\mathcal{B}_{\lambda}}:S,T\in\std(\lambda)\big\}
\end{equation}
forms an orthonormal basis for the space of all complex valued functions defined on $S_n$. If $(\rho_{\lambda},V^{\lambda})$ denotes the irreducible representation of $S_n$ indexed by $\lambda$, then recall, from \eqref{eq:ONB_for_G}, that the functions $\psi^{ST}_{\mathcal{B}_{\lambda}}: S_n\rightarrow\mathbb{R}$ are defined by \[\psi^{ST}_{\mathcal{B}_{\lambda}}(\pi):=\sqrt{f^{\lambda}}\Tr\left(\bigg[\rho_{\lambda}(\pi^{-1})\bigg]_{\mathcal{B}_{\lambda}}E^{ST}_{\mathcal{B}_{\lambda}}\right),\;\pi\in S_n.\]
Here, $E^{ST}_{\mathcal{B}_{\lambda}}$ is the $f^{\lambda}\times f^{\lambda}$ matrix with rows and column indexed by the standard Young tableaux of shape $\lambda$ having only non-zero entry $1$ at row $S\in\std(\lambda)$ and column $T\in\std(\lambda)$.

We now see a specific orthonormal basis for $\mathbb{C}^{S_n}$ viz. the \emph{Gelfand-Tsetlin} basis (GT-basis). We start with recalling the branching rule for the symmetric group. Let $\lambda^{-}$ be the collection of all partitions of $n-1$ obtained by removing a single \emph{inner corner} (a corner box in $\lambda$ whose removal leaves a valid Young diagram of $n-1$) from the Young diagram of $\lambda\vdash n$. Then the precise statement of the branching rule is the following \cite[Theorem 2.8.3]{Sagan}:
\[V^{\lambda}\downarrow^{S_n}_{S_{n-1}}=\underset{\mu\in\lambda^{-}}{\oplus}V^{\mu}.\]
For example taking $n=15$, the restriction of $V^{(5,4,4,2)}$ to $S_{14}$ is given by the following:
\[V^{\tiny\yng(5,4,4,2)}\big\downarrow_{S_{14}}^{S_{15}}=V^{\tiny\yng(4,4,4,2)}\oplus V^{\tiny\yng(5,4,3,2)}\oplus V^{\tiny\yng(5,4,4,1)}.\]
\subsection{The Vershik-Okounkov approch to the representation theory of $S_n$}The Vershik-Okounkov \cite{VO} approach to the representation theory of $S_n$ is based on the nice property that \emph{the restriction of an irreducible $S_n$-module to $S_{n-1}$ has a multiplicity-free decomposition into irreducible $S_{n-1}$-module}. Start with an irreducible $S_n$-module $V$. The restriction of $V$ to $S_{n-1}$ has multiplicity-free decomposition into irreducible $S_{n-1}$-modules. Again, restriction of each of these irreducibles to $S_{n-2}$ has a multiplicity-free decomposition into irreducible $S_{n-2}$-modules. Iterating this, we get a canonical decomposition of $V$ into irreducible $S_1$-modules i.e., one-dimensional subspaces \cite[Theorem 2.9]{VO}. Thus there is a canonical basis of $V$. This basis is named the \emph{Gelfand-Tsetlin} basis of $V$, we abbreviate it by GT-basis. \emph{The GT-basis forms an orthogonal basis \cite[Proposition 6.2]{VO} of} $V$. We can scale the GT-basis elements to make the GT-basis orthonormal. If the Specht module $V$ is indexed by $\lambda\vdash n$, then this aforementioned normalized GT-basis is an example of $\mathcal{B}_{\lambda}$ in \eqref{eq:ONB_for_Sn}. In this example of normalized GT-basis, we simply write $\psi^{ST}$ to denote $\psi_{\mathcal{B}_{\lambda}}^{ST}$.

The GT-basis vectors are the simultaneous eigenvectors of the elements of a maximal commuting subalgebra of $\mathbb{C}[S_n]$ when they act on $V$. This subalgebra is known as the \emph{Gelfand-Tsetlin} subalgebra (GT-subalgebra) of $\mathbb{C}[S_n]$. Note that $\mathbb{C}[S_n]$ is the group algebra of the formal linear combination of the permutations from $S_n$ with complex coefficient, it is isomorphic to $\mathbb{C}^{S_n}$ by
\[\sum_{\pi\in S_n}f(\pi)\pi\;\;\left(\in\mathbb{C}[S_n]\right)\iff f\;\;\left(\in\mathbb{C}^{S_n}\right).\]%Let $Z_i$ denote the center of $\mathbb{C}[S_i],\;1\leq i\leq n$. Then the GT-subalgebra is defined to be the subalgebra of $\mathbb{C}[S_n]$ generated by $Z_1\cup\dots\cup Z_n$ and its dimension is equal to the sum of dimensions of distinct non-isomorphic irreducible $S_n$-modules. 
The image of the GT-subalgebra under the isomorphism \eqref{eq:VO_theory-iso} is the algebra of operators on $V^{\lambda}$ that are diagonal with respect to the GT-basis of $V^{\lambda}$.
\begin{equation}\label{eq:VO_theory-iso}
	\mathbb{C}[S_n]\cong\underset{\lambda\vdash n}{\oplus}\text{End}(V^{\lambda})\;\text{ given by }\;\pi\mapsto(V^{\lambda}\overset{\pi}{\longrightarrow}V^{\lambda}),\;\lambda\vdash n.
\end{equation}
We note that the isomorphism \eqref{eq:VO_theory-iso} is $S_n$-linear, i.e., this is an isomorphism of representations. It follows that any vector in the GT-basis is uniquely (up to scalar factor) determined by the eigenvalues of the elements of the GT-subalgebra on this vector. Thus, the eigenvalues of the elements of a generating set of the GT-subalgebra determine the GT-basis vectors. The \emph{Young-Jucys-Murphy} elements (YJM-elements \cite[page. 4]{VO})
\begin{equation}\label{eq:YJM_elements}
	Y_1=0\text{ and }Y_i=(1,i)+(2,i)+\cdots+(i-1,i),\;1<i\leq n
\end{equation}
form a generating set for the GT-subalgebra of $\mathbb{C}[S_n]$. %Now we give the action of the Young-Jucys-Murphy elements on the GT-basis vectors of the irreducible $S_n$-module $V^{\lambda}$ for $\lambda\vdash n$. 
If the GT-basis vectors of $V^{\lambda}$ are indexed by the standard Young tableaux of shape $\lambda$, then the action of the Young-Jucys-Murphy elements are given as follows (\cite[page. 15]{VO}):
\begin{equation}\label{eq:ch2_action_of_YJM_elements}
	Y_1(v_T)=0\text{ and }Y_i(v_T)=c(b_T(i))v_T,\;1<i\leq n\text{ for all }T\in\std(\lambda).
\end{equation}
Here, $\{v_T:T\in\std(\lambda)\}$ denotes the GT-basis of $V^{\lambda}$. Moreover, the action of the adjacent transpositions $(u,u+1)$ for $1\leq u<n$, on the GT-basis of $V^{\lambda}$ are given by (\cite[eqn (6.5)]{VO}):
\begin{equation}\label{eq:Coxeter_gen._action_on_GT-basis}
	(u,u+1)(v_T)=\frac{1}{\zeta(u,T)}v_T\;+\;\sqrt{1-\frac{1}{\zeta(u,T)^2}}\;v_{(u,u+1)T},\;1\leq u<n,\text{ for all }T\in\std(\lambda).
\end{equation}
Here $\zeta(u,T)=c(b_T(u+1))-c(b_T(u))$, and $(u+1,u)T$ is the $\lambda$-tableau obtained from $T$ by interchanging $u$ and $u+1$. We note that, $\zeta(u,T)^2=1$ when $u$ and $u+1$ are in the same row or same column, i.e., the second term in the right hand side of \eqref{eq:Coxeter_gen._action_on_GT-basis} does not appear when $(u,u+1)T\notin\std(\lambda)$. We now proof the following standard result for later use.
\begin{lem}[{Frobenius, Vershik-Okounkov}]\label{lem:Sym_char}
	For $\lambda\vdash n$, let $\chi^{\lambda}$ denote the irreducible character of $S_n$ indexed by $\lambda$. Then, $\chi^{\lambda}((1,2))=\frac{f^{\lambda}}{\binom{n}{2}}\emph{Diag}(\lambda)$.
\end{lem}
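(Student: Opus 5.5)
We prove the formula for $\chi^{\lambda}((1,2))$ by computing the trace of the sum of all transpositions on $V^{\lambda}$ in two ways: once through the Young-Jucys-Murphy elements and once through the fact that transpositions form a single conjugacy class.

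\textbf{Step 1: YJM side.} Consider the class sum $C=\sum_{1\le i<j\le n}(i,j)\in\mathbb{C}[S_n]$. By \eqref{eq:YJM_elements} this equals $Y_1+Y_2+\cdots+Y_n$, since each transposition $(i,j)$ with $i<j$ appears in exactly one $Y_j$. Let $C$ act on $V^{\lambda}$ and use \eqref{eq:ch2_action_of_YJM_elements}. For every $T\in\std(\lambda)$,
\[
C v_T=\sum_{i=1}^n c(b_T(i))\,v_T=\text{Diag}(\lambda)\,v_T .
\]
The middle sum equals $\text{Diag}(\lambda)$ because the boxes $b_T(1),\dots,b_T(n)$ run over all boxes of $\lambda$ exactly once. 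Here $Y_1$ contributes $0$, which matches the content $0$ of the box containing $1$. So $C$ acts on $V^{\lambda}$ as the scalar $\text{Diag}(\lambda)$, and its trace on $V^{\lambda}$ is $f^{\lambda}\,\text{Diag}(\lambda)$.

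\textbf{Step 2: character side.} All transpositions are conjugate in $S_n$, and characters are class functions. Hence
\[
\Tr\big(\rho_\lambda(C)\big)=\sum_{i<j}\chi^{\lambda}((i,j))=\binom{n}{2}\chi^{\lambda}((1,2)).
\]
Equating the two traces gives $\chi^{\lambda}((1,2))=\frac{f^{\lambda}}{\binom n2}\text{Diag}(\lambda)$.

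\textbf{Potential gaps.} The argument is short, and the only real input is the Vershik-Okounkov eigenvalue description \eqref{eq:ch2_action_of_YJM_elements}, which the paper takes from \cite{VO}. One point deserves care: the trace is basis-independent, so it is legitimate to compute it in the GT-basis. An alternative avoids YJM elements altogether. Since $C$ is central, Schur's lemma (as in Lemma \ref{lem:FT_of_const._1_fn}) forces it to act as a scalar $\binom n2\chi^\lambda((1,2))/f^\lambda$, and one could then evaluate that scalar by Frobenius's classical formula. However, the YJM route is self-contained given the recalled facts, so I would use it.
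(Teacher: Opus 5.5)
Your proposal is correct and follows the paper's proof essentially verbatim. You write $\sum_{i<j}(i,j)=Y_1+\cdots+Y_n$, read off from the YJM action that it acts on $V^{\lambda}$ as the scalar $\text{Diag}(\lambda)$, and equate its trace $f^{\lambda}\,\text{Diag}(\lambda)$ with $\binom{n}{2}\chi^{\lambda}((1,2))$, using that all transpositions are conjugate.
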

\begin{proof}
	We note that $\displaystyle\sum_{1\leq i<j\leq n}(i,j)=\sum_{k=1}^{n}Y_k$. Therefore, \eqref{eq:ch2_action_of_YJM_elements} implies that
	\[\left(\displaystyle\sum_{1\leq i<j\leq n}(i,j)\right)(v_T)=\sum_{k=1}^{n}Y_k(v_T)=\sum_{k=1}^{n}c(b_T(k))v_T=\text{Diag}(\lambda)v_T,\]
	for every GT-basis vector $v_T$ indexed by $T\in\std(\lambda)$. Thus, matrix for the action of $\displaystyle\sum_{1\leq i<j\leq n}(i,j)$ on $V^{\lambda}$ is a scalar matrix Diag$(\lambda)I$. Therefore, equating trace, we get
	\[
		\displaystyle\sum_{1\leq i<j\leq n}\chi^{\lambda}((i,j))=\text{Diag}(\lambda)f^{\lambda}\quad\text{ implies that }\quad\binom{n}{2}\chi^{\lambda}((1,2))=\text{Diag}(\lambda)f^{\lambda}.
	\]
	Here, we have used $\chi^{\lambda}((i,j))=\chi^{\lambda}((1,2))$ for every $1\leq i<j\leq n$; this follows from the fact that the characters are class functions. Thus, the lemma follows.
\end{proof}
\section{Proof of Theorem \ref{thm:star<->complete}}\label{sec:proof_of_thm:1.1}
In this section, We focus on the noise sensitivity with respect to interchange processes on $K_n$ and $\ST$. We will prove Theorem \ref{thm:star<->complete} in this section.
\subsection{Noise sensitivity with respect to the interchange process on $K_n$} We now consider the interchange process on the complete graph, with every edge rings with a constant rate $\nu>0$. In this section, we denote the set of all transpositions of $S_n$ by the notation $\mathcal{T}_n$,
\[\text{i.e., }\quad\mathcal{T}_n:=\{(i,j):1\leq i<j\leq n\}.\]
The associated rate function $\mu_{\mathbb{K}}:S_n\rightarrow (0,\infty)$ is defined by
\[\mu_{\mathbb{K}}(\pi)=\begin{cases}
	\nu&\text{ if }\pi\in \mathcal{T}_n,\\
	0&\text{ otherwise.}\
\end{cases}\]
and the infinitesimal generator is given by the matrix of the operator
\[-\Delta_{\mathbb{K}}(R):=\nu\displaystyle\sum_{\tau\in \mathcal{T}_n}(R_{\tau}-I)\]
on $\mathbb{C}^{S_n}$ with respect to the basis $\{\mathbb{1}_{\{\pi\}}:\pi\in S_n\}$. Here, $R$ is the regular representation of $S_n$ and $I$ is the identity operator. We now prove the following useful results.
\begin{lem}[{\cite[Corollary 1 and Lemma 7]{DS}}]\label{lem:compgrp_eigval_on_irr}
	For any $\lambda\vdash n$, let $(\rho_{\lambda},V^{\lambda})$ denote the irreducible representation of $S_n$ indexed by $\lambda$. Then,
	\[
		\Delta_{\mathbb{K}}(\rho_{\lambda}):=\nu\displaystyle\sum_{\tau\in \mathcal{T}_n}(I-\rho_{\lambda}(\tau))=\nu\left(\binom{n}{2}-\emph{Diag}(\lambda)\right) I,\quad \text{i.e., }\displaystyle\sum_{\tau\in \mathcal{T}_n}\rho_{\lambda}(\tau)=\emph{Diag}(\lambda) I.
	\]
\end{lem}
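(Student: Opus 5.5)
The plan is to show that the class sum $\sum_{\tau\in\mathcal{T}_n}\rho_{\lambda}(\tau)$ acts on $V^{\lambda}$ as the scalar $\text{Diag}(\lambda)$. The formula for $\Delta_{\mathbb{K}}(\rho_{\lambda})$ then follows by linearity. There are two natural routes, and I would present the Gelfand--Tsetlin route because it makes the scalar explicit.

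First, I would use the identity already used in the proof of Lemma \ref{lem:Sym_char}:
\[
\sum_{1\le i<j\le n}(i,j)=\sum_{k=1}^{n}Y_k .
\]
This holds in $\mathbb{C}[S_n]$, since each transposition $(i,j)$ with $i<j$ appears exactly once, namely in $Y_j$. Applying $\rho_{\lambda}$ to both sides and evaluating on the GT-basis vector $v_T$, for $T\in\std(\lambda)$, the relation \eqref{eq:ch2_action_of_YJM_elements} gives
\[
\sum_{\tau\in\mathcal{T}_n}\rho_{\lambda}(\tau)v_T=\sum_{k=1}^{n}c(b_T(k))\,v_T .
\]
As $k$ runs over $1,\dots,n$, the box $b_T(k)$ runs bijectively over the boxes of $\lambda$. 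Hence the eigenvalue equals $\text{Diag}(\lambda)$ and does not depend on $T$.

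The GT-basis is a basis of $V^{\lambda}$, and the same holds for the normalized basis since scaling does not change eigenvalues. So the operator $\sum_{\tau}\rho_{\lambda}(\tau)$ is diagonal in this basis with constant diagonal entry $\text{Diag}(\lambda)$, that is, it equals $\text{Diag}(\lambda)I$. This statement is basis-independent. Finally,
\[
\nu\sum_{\tau\in\mathcal{T}_n}\bigl(I-\rho_{\lambda}(\tau)\bigr)=\nu\Bigl(|\mathcal{T}_n|\,I-\text{Diag}(\lambda)I\Bigr)=\nu\Bigl(\tbinom{n}{2}-\text{Diag}(\lambda)\Bigr)I,
\]
using $|\mathcal{T}_n|=\binom{n}{2}$.

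As an alternative check that avoids the Vershik--Okounkov machinery, one could argue via Schur's lemma. The sum $\sum_{\tau}\rho_{\lambda}(\tau)$ commutes with every $\rho_{\lambda}(g)$ because $\mathcal{T}_n$ is closed under conjugation, so it equals $cI$. Taking traces gives $c\,f^{\lambda}=\binom{n}{2}\chi^{\lambda}((1,2))$, and Lemma \ref{lem:Sym_char} then yields $c=\text{Diag}(\lambda)$. However, Lemma \ref{lem:Sym_char} was itself proved via the Jucys--Murphy computation, so both routes rest on the same fact.

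There is no real obstacle here. The only point needing care is the bookkeeping that the contents $c(b_T(k))$ sum to $\text{Diag}(\lambda)$ for every standard tableau $T$; this holds because the filling of $T$ is a bijection between $[n]$ and the boxes of $\lambda$.
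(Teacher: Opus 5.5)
Your proof is correct. Your ``alternative check'' (Schur's lemma, then traces, then Lemma~\ref{lem:Sym_char}) is exactly the paper's argument. Your main route is more direct.

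The paper first gets scalarity abstractly from Schur's lemma. It then identifies the scalar as $\binom{n}{2}\chi^{\lambda}((1,2))/f^{\lambda}$ by taking traces, and only then converts this to $\text{Diag}(\lambda)$ via Lemma~\ref{lem:Sym_char}. The proof of that lemma already establishes your identity $\sum_{\tau}\rho_{\lambda}(\tau)v_T=\text{Diag}(\lambda)v_T$ for every GT-vector. So, as you observed, the passage through the character value is logically a detour for this particular statement.

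What the paper's Schur-lemma template buys is reusability. The same argument is used verbatim for Lemma~\ref{lem:s-cyc_eigval_on_irr}. There the class sum of $s$-cycles is no longer simply $\sum_k Y_k$, and the eigenvalue is naturally expressed as the character ratio $|\mathscr{C}_s|\chi^{\lambda}(\sigma)/f^{\lambda}$. Your Jucys--Murphy computation is shorter and self-contained, and it makes the scalar explicit without any character theory. However, it relies on the special fact that the transposition class sum equals $\sum_k Y_k$, so it does not transfer as directly to other conjugacy classes.
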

\begin{proof}
	We first note that 
	\begin{align*}
		&\rho_{\lambda}(\pi)\circ\left(\nu\displaystyle\sum_{\tau\in \mathcal{T}_n}(I-\rho_{\lambda}(\tau))\right)\circ\rho_{\lambda}(\pi^{-1})=\nu\displaystyle\sum_{\tau\in \mathcal{T}_n}(I-\rho_{\lambda}(\tau)),\\
		\text{i.e.,}\;\;&\rho_{\lambda}(\pi)\circ\Delta_{\mathbb{K}}(\rho_{\lambda})=\Delta_{\mathbb{K}}(\rho_{\lambda})\circ\rho_{\lambda}(\pi)
	\end{align*}
	 for all $\pi\in S_n$. Thus, Schur's lemma guarantees that the operator $\Delta_{\mathbb{K}}(\rho_{\lambda})$ is scalar; let $\Delta_{\mathbb{K}}(\rho_{\lambda})=cI$. Then, equating the trace, we get
	\[\nu\displaystyle\sum_{\tau\in \mathcal{T}_n}(f^{\lambda}-\chi^{\lambda}(\tau))=cf^{\lambda}.\]
	Using the fact that $\chi^{\lambda}$ is a class function, we obtain
	\[
	\nu\binom{n}{2}\left(f^{\lambda}-\chi^{\lambda}(1,2)\right)=cf^{\lambda},\;\text{ i.e., }\;c=\nu\left(\binom{n}{2}-\frac{\binom{n}{2}}{f^{\lambda}}\chi^{\lambda}(1,2)\right),
	\]
	Finally, using Lemma \ref{lem:Sym_char} we obtain $c=\nu\left(\binom{n}{2}-\text{Diag}(\lambda)\right)$. This complete the proof.
\end{proof}
\begin{lem}\label{lem:compgrp_eigval}
	Let $\lambda\vdash n$ and $S,T\in\std(\lambda)$. For any orthonormal basis $\mathcal{B}_{\lambda}$ of $V^{\lambda}$, recall that $\psi^{ST}_{\mathcal{B}_{\lambda}}(\pi):=\sqrt{f^{\lambda}}\Tr\left(\big[\rho_{\lambda}(\pi^{-1})\big]_{\mathcal{B}_{\lambda}}E^{ST}_{\mathcal{B}_{\lambda}}\right)$ for all $\pi\in S_n$. Then, the orthonormal basis 
	\[	\underset{\lambda\vdash n}{\bigcup}\big\{\psi^{ST}_{\mathcal{B}_{\lambda}}:S,T\in\std(\lambda)\big\}\]
	given in \eqref{eq:ONB_for_Sn} is an eigenbasis of $\Delta_{\mathbb{K}}(R)$. Moreover, the eigenvalue associated to the eigenvector $\psi^{ST}_{\mathcal{B}_{\lambda}}$ is $\nu\left(\binom{n}{2}-\emph{Diag}(\lambda)\right),\;S,T\in\std(\lambda)$ for every $\lambda\vdash n$.
\end{lem}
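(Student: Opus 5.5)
The plan is to compute $\Delta_{\mathbb{K}}(R)\psi^{ST}_{\mathcal{B}_{\lambda}}$ directly from the definition of the right regular representation. We then use the explicit formula $\psi^{ST}_{\mathcal{B}_{\lambda}}(\pi)=\sqrt{f^{\lambda}}\Tr\left(\big[\rho_{\lambda}(\pi^{-1})\big]_{\mathcal{B}_{\lambda}}E^{ST}_{\mathcal{B}_{\lambda}}\right)$ together with Lemma \ref{lem:compgrp_eigval_on_irr}. Since $\Delta_{\mathbb{K}}(R)=\nu\sum_{\tau\in\mathcal{T}_n}(I-R_\tau)$, it suffices to evaluate $\sum_{\tau\in\mathcal{T}_n}(R_\tau\psi^{ST}_{\mathcal{B}_{\lambda}})(\pi)$ for each $\pi\in S_n$.

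By definition, $(R_\tau\psi^{ST}_{\mathcal{B}_{\lambda}})(\pi)=\psi^{ST}_{\mathcal{B}_{\lambda}}(\pi\tau)$. Using $(\pi\tau)^{-1}=\tau^{-1}\pi^{-1}$ and the homomorphism property, this equals
\[
\sqrt{f^{\lambda}}\Tr\left(\big[\rho_{\lambda}(\tau^{-1})\big]_{\mathcal{B}_{\lambda}}\big[\rho_{\lambda}(\pi^{-1})\big]_{\mathcal{B}_{\lambda}}E^{ST}_{\mathcal{B}_{\lambda}}\right).
\]
Summing over $\tau\in\mathcal{T}_n$ and using linearity of the trace, the factor $\sum_{\tau}\rho_{\lambda}(\tau^{-1})=\sum_{\tau}\rho_\lambda(\tau)$ appears (transpositions are involutions, and $\mathcal{T}_n$ is closed under inversion in any case). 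By Lemma \ref{lem:compgrp_eigval_on_irr}, this factor equals $\text{Diag}(\lambda)I$, independently of the chosen basis $\mathcal{B}_\lambda$, since a scalar operator has scalar matrix in every basis. Hence
\[
\sum_{\tau\in\mathcal{T}_n}R_\tau\psi^{ST}_{\mathcal{B}_{\lambda}}=\text{Diag}(\lambda)\,\psi^{ST}_{\mathcal{B}_{\lambda}},
\]
and therefore
\[
\Delta_{\mathbb{K}}(R)\psi^{ST}_{\mathcal{B}_{\lambda}}=\nu\left(\binom{n}{2}-\text{Diag}(\lambda)\right)\psi^{ST}_{\mathcal{B}_{\lambda}}.
\]

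Finally, Lemma \ref{lem:ONB_for_G} tells us that the collection \eqref{eq:ONB_for_Sn} is an orthonormal basis of $\mathbb{C}^{S_n}$. Since every element of it is an eigenvector, it is an eigenbasis. There is no real obstacle; the only point needing care is the order of multiplication. Right multiplication by $\tau$ puts $\rho_\lambda(\tau^{-1})$ on the left of $\rho_\lambda(\pi^{-1})$ inside the trace. That placement doesn't matter here, because the summed operator is scalar and so commutes with everything.
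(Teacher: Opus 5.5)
Your proposal is correct and follows the paper's proof. Both write $(R_\tau\psi^{ST}_{\mathcal{B}_{\lambda}})(\pi)=\psi^{ST}_{\mathcal{B}_{\lambda}}(\pi\tau)$, factor $\rho_\lambda(\tau^{-1}\pi^{-1})=\rho_\lambda(\tau^{-1})\rho_\lambda(\pi^{-1})$ inside the trace, sum over $\tau$, and replace $\sum_{\tau}\rho_\lambda(\tau)$ by $\text{Diag}(\lambda)I$ using Lemma \ref{lem:compgrp_eigval_on_irr}.
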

\begin{proof}
	For every $\lambda\vdash n$ and $S,T\in\std(\lambda)$; we have
	\[\Delta_{\mathbb{K}}(R)\psi^{ST}_{\mathcal{B}_{\lambda}}=\nu\displaystyle\sum_{\tau\in \mathcal{T}_n}(I-R_{\tau})\psi^{ST}_{\mathcal{B}_{\lambda}}=\nu\displaystyle\sum_{\tau\in \mathcal{T}_n}(\psi^{ST}_{\mathcal{B}_{\lambda}}-R_{\tau}\psi^{ST}_{\mathcal{B}_{\lambda}})\]
	Therefore, for every $\pi\in S_n$, we get
	\begin{align*}
		\Delta_{\mathbb{K}}(R)\psi^{ST}_{\mathcal{B}_{\lambda}}(\pi)&=\nu\displaystyle\sum_{\tau\in \mathcal{T}_n}\left(\psi^{ST}_{\mathcal{B}_{\lambda}}(\pi)-R_{\tau}\psi^{ST}_{\mathcal{B}_{\lambda}}(\pi)\right)\\
		&=\nu\left(\binom{n}{2}\psi^{ST}_{\mathcal{B}_{\lambda}}(\pi)-\displaystyle\sum_{\tau\in \mathcal{T}_n}R_{\tau}\psi^{ST}_{\mathcal{B}_{\lambda}}(\pi)\right)\\
		&=\nu\left(\binom{n}{2}\psi^{ST}_{\mathcal{B}_{\lambda}}(\pi)-\displaystyle\sum_{\tau\in \mathcal{T}_n}\psi^{ST}_{\mathcal{B}_{\lambda}}(\pi\tau)\right)\\
		&=\nu\left(\binom{n}{2}\psi^{ST}_{\mathcal{B}_{\lambda}}(\pi)-\displaystyle\sum_{\tau\in \mathcal{T}_n}\sqrt{f^{\lambda}}\Tr\left(\bigg[\rho_{\lambda}(\tau^{-1}\pi^{-1})\bigg]_{\mathcal{B}_{\lambda}}E^{ST}_{\mathcal{B}_{\lambda}}\right)\right)
	\end{align*}
	Now, using the linearity of trace; we obtain
	\begin{align*}
		\Delta_{\mathbb{K}}(R)\psi^{ST}_{\mathcal{B}_{\lambda}}(\pi)
		&=\nu\left(\binom{n}{2}\psi^{ST}_{\mathcal{B}_{\lambda}}(\pi)-\sqrt{f^{\lambda}}\Tr\left(\displaystyle\sum_{\tau\in \mathcal{T}_n}\bigg[\rho_{\lambda}(\tau^{-1}\pi^{-1})\bigg]_{\mathcal{B}_{\lambda}}E^{ST}_{\mathcal{B}_{\lambda}}\right)\right)\\
		&=\nu\left(\binom{n}{2}\psi^{ST}_{\mathcal{B}_{\lambda}}(\pi)-\sqrt{f^{\lambda}}\Tr\left(\displaystyle\sum_{\tau\in \mathcal{T}_n}\bigg[\rho_{\lambda}(\tau^{-1})\bigg]_{\mathcal{B}_{\lambda}}\bigg[\rho_{\lambda}(\pi^{-1})\bigg]_{\mathcal{B}_{\lambda}}E^{ST}_{\mathcal{B}_{\lambda}}\right)\right)\\
		&=\nu\left(\binom{n}{2}\psi^{ST}_{\mathcal{B}_{\lambda}}(\pi)-\sqrt{f^{\lambda}}\Tr\left(\bigg[\displaystyle\sum_{\tau\in \mathcal{T}_n}\rho_{\lambda}(\tau)\bigg]_{\mathcal{B}_{\lambda}}\bigg[\rho_{\lambda}(\pi^{-1})\bigg]_{\mathcal{B}_{\lambda}}E^{ST}_{\mathcal{B}_{\lambda}}\right)\right)\\
		&=\nu\left(\binom{n}{2}\psi^{ST}_{\mathcal{B}_{\lambda}}(\pi)-\sqrt{f^{\lambda}}\Tr\left(\text{Diag}(\lambda)\bigg[\rho_{\lambda}(\pi^{-1})\bigg]_{\mathcal{B}_{\lambda}}E^{ST}_{\mathcal{B}_{\lambda}}\right)\right),\text{ by Lemma \ref{lem:compgrp_eigval_on_irr}}\\
		&=\nu\left(\binom{n}{2}\psi^{ST}_{\mathcal{B}_{\lambda}}(\pi)-\text{Diag}(\lambda)\sqrt{f^{\lambda}}\Tr\left(\bigg[\rho_{\lambda}(\pi^{-1})\bigg]_{\mathcal{B}_{\lambda}}E^{ST}_{\mathcal{B}_{\lambda}}\right)\right)\\
		&=\nu\left(\binom{n}{2}\psi^{ST}_{\mathcal{B}_{\lambda}}(\pi)-\text{Diag}(\lambda)\psi^{ST}_{\mathcal{B}_{\lambda}}(\pi)\right)=\nu\left(\binom{n}{2}-\text{Diag}(\lambda)\right)\psi^{ST}_{\mathcal{B}_{\lambda}}(\pi).
	\end{align*}
	Thus, we get $\Delta_{\mathbb{K}}(R)\psi^{ST}_{\mathcal{B}_{\lambda}}=\nu\left(\binom{n}{2}-\text{Diag}(\lambda)\right)\psi^{ST}_{\mathcal{B}_{\lambda}}$ for every $S,T\in\std(\lambda),\lambda\vdash n$. Therefore, the lemma follows.
\end{proof}
\begin{lem}\label{lem:compgrp_boolfn_projec}
	Let $\lambda\vdash n$ and $S,T\in\std(\lambda)$. For any orthonormal basis $\mathcal{B}_{\lambda}$ of $V^{\lambda}$, recall that $\psi^{ST}_{\mathcal{B}_{\lambda}}(\pi):=\sqrt{f^{\lambda}}\Tr\left(\big[\rho_{\lambda}(\pi^{-1})\big]_{\mathcal{B}_{\lambda}}E^{ST}_{\mathcal{B}_{\lambda}}\right)$ for all $\pi\in S_n$. Then, for any Boolean function $\f:S_n\rightarrow \{0,1\}$, we have
	\[\langle\f,\psi^{ST}_{\mathcal{B}_{\lambda}}\rangle=\frac{\sqrt{f^{\lambda}}}{n!}\times\text{The }(S,T)\text{th entry of the matrix }\Big[\widehat{\f}(\lambda)\Big]_{\mathcal{B}_{\lambda}}.\]
	Here, the $(S,T)$th entry means the element at row $S$ and column $T$.
\end{lem}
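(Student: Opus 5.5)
The identity follows by unwinding the definitions; the only subtlety is bookkeeping of inverses and complex conjugates. We use the inner product $\langle\f,\g\rangle=\frac{1}{n!}\sum_{\pi\in S_n}\f(\pi)\overline{\g(\pi)}$, which is the common form of \eqref{eq:MCIP} and \eqref{eq:ChIP} under the uniform stationary distribution. Substituting the definition of $\psi^{ST}_{\mathcal{B}_{\lambda}}$ gives
\[
\langle\f,\psi^{ST}_{\mathcal{B}_{\lambda}}\rangle=\frac{\sqrt{f^{\lambda}}}{n!}\sum_{\pi\in S_n}\f(\pi)\,\overline{\Tr\Big(\big[\rho_{\lambda}(\pi^{-1})\big]_{\mathcal{B}_{\lambda}}E^{ST}_{\mathcal{B}_{\lambda}}\Big)}.
\]
Since $\f$ is real valued (it is Boolean), the conjugation acts only on the trace.

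Next we compute the trace entrywise. For any matrix $A$, $\Tr(AE^{ST})=A_{T,S}$, the $(T,S)$ entry. Hence the summand involves $\overline{[\rho_{\lambda}(\pi^{-1})]_{T,S}}$. The representation matrices are unitary in $\mathcal{B}_{\lambda}$, so $[\rho_{\lambda}(\pi^{-1})]=[\rho_{\lambda}(\pi)]^{\dagger}$. Therefore $\overline{[\rho_{\lambda}(\pi^{-1})]_{T,S}}=[\rho_{\lambda}(\pi)]_{S,T}$, and the conjugate and the transpose cancel exactly.

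Pulling the finite sum inside the matrix entry gives $\sum_{\pi}\f(\pi)[\rho_{\lambda}(\pi)]_{S,T}=\big[\widehat{\f}(\lambda)\big]_{S,T}$, using $\widehat{\f}(\rho)=\sum_{x}\f(x)\rho(x)$. Multiplying by $\frac{\sqrt{f^{\lambda}}}{n!}$ yields the claim.

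The only step needing care is the index placement in the trace identity: $\Tr(AE^{ST})=A_{TS}$, not $A_{ST}$. The unitarity step is what converts this $(T,S)$ entry into the $(S,T)$ entry. If a concrete basis were used instead, such as the normalized GT-basis with real orthogonal matrices, the same argument applies verbatim.
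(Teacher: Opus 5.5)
Your proof is correct, but it takes a different route from the paper. You work directly from the explicit formula for $\psi^{ST}_{\mathcal{B}_{\lambda}}$. The trace identity $\Tr(AE^{ST}_{\mathcal{B}_{\lambda}})=A_{TS}$ together with unitarity, $\overline{[\rho_{\lambda}(\pi^{-1})]_{TS}}=[\rho_{\lambda}(\pi)]_{ST}$, turns the sum into the $(S,T)$ entry of $\widehat{\f}(\lambda)$, with no further machinery.

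The paper instead computes the reversed inner product $\langle\psi^{ST}_{\mathcal{B}_{\lambda}},\f\rangle$. It applies the Plancherel formula \eqref{eq:Plancherel formula} to $\check{\f}(\pi)=\overline{\f(\pi^{-1})}$ and $\psi^{ST}_{\mathcal{B}_{\lambda}}$, using the defining Fourier transform $\widehat{\psi}^{ST}_{\mathcal{B}_{\lambda}}(\eta)=\delta_{\eta,\lambda}\frac{n!}{\sqrt{f^{\lambda}}}E^{ST}_{\mathcal{B}_{\lambda}}$ and the identity $[\widehat{\check{\f}}(\lambda)]_{\mathcal{B}_{\lambda}}=[\widehat{\f}(\lambda)]_{\mathcal{B}_{\lambda}}^{\dagger}$; this identity is where unitarity enters in the paper. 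That calculation produces the complex conjugate of the claimed value, and the lemma follows by conjugating.

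The paper's version reuses the Fourier framework already built for Lemma \ref{lem:ONB_for_G}. It also makes transparent that $\psi^{ST}_{\mathcal{B}_{\lambda}}$ isolates a single matrix coefficient of $\widehat{\f}(\lambda)$. Yours is shorter and more elementary, since it avoids Plancherel and Fourier inversion entirely.

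One small remark: your appeal to $\f$ being real-valued is unnecessary. In $\langle\f,\g\rangle=\frac{1}{n!}\sum_{\pi}\f(\pi)\overline{\g(\pi)}$ the conjugation falls only on the second argument by definition. So your argument, like the paper's, holds for every $\f\in\mathbb{C}^{S_n}$.
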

\begin{proof}
	We first note that $\widehat{\psi}^{ST}_{\mathcal{B}_{\lambda}}(\eta)=\delta_{\eta,\lambda}\frac{n!}{\sqrt{f^{\lambda}}}E^{ST}_{\mathcal{B}_{\lambda}}$ for all $\eta\vdash n$, this follows from \eqref{eq:FT_of_ONB} by setting $G=S_n$. Therefore, we have
		\[\langle\psi^{ST}_{\mathcal{B}_{\lambda}},\f\rangle=\sum_{\pi\in S_n}\psi^{ST}_{\mathcal{B}_{\lambda}}(\pi)\overline{\f(\pi)}\;\frac{1}{n!}=\frac{1}{n!}\sum_{\pi\in S_n}\psi^{ST}_{\mathcal{B}_{\lambda}}(\pi)\check{\f}(\pi^{-1})\]
where $\check{\f}(\pi)=\overline{\f(\pi^{-1})}$ for all $\pi\in S_n$. Then, we have
	\[
	\Big[\widehat{\check{\f}}(\lambda)\Big]_{\mathcal{B}_{\lambda}}:=\Big[\widehat{\f}(\lambda)\Big]_{\mathcal{B}_{\lambda}}^{\dagger}\text{, the conjugate-transpose  of the matrix }\Big[\widehat{\f}(\lambda)\Big]_{\mathcal{B}_{\lambda}}.
\]
This follows from the fact that $\big[\rho(g)\big]_{\mathcal{B}^{\rho}}$ is unitary matrix for every $g\in G$. Therefore, using the Plancherel formula \eqref{eq:Plancherel formula}, we get
\begin{align*}
	\langle\psi^{ST}_{\mathcal{B}_{\lambda}},\f\rangle=\frac{1}{n!}\sum_{\pi\in S_n}\check{\f}(\pi^{-1})\psi^{ST}_{\mathcal{B}_{\lambda}}(\pi)&=\frac{1}{n!^2}\sum_{\eta\vdash n}f^{\eta}\Tr\left(\widehat{\check{\f}}(\eta)\widehat{\psi}^{ST}_{\mathcal{B}_{\lambda}}(\eta)\right)\\
	&=\frac{1}{n!^2}\sum_{\eta\vdash n}f^{\eta}\Tr\left(\widehat{\check{\f}}(\eta)\delta_{\eta,\lambda}\frac{n!}{\sqrt{f^{\lambda}}}E^{ST}_{\mathcal{B}_{\lambda}}\right)\\
	&=\frac{f^{\lambda}}{n!^2}\Tr\left(\widehat{\check{\f}}(\lambda)\frac{n!}{\sqrt{f^{\lambda}}}E^{ST}_{\mathcal{B}_{\lambda}}\right)\\
	&=\frac{\sqrt{f^{\lambda}}}{n!}\Tr\left(\Big[\widehat{\f}(\lambda)\Big]_{\mathcal{B}_{\lambda}}^{\dagger}E^{ST}_{\mathcal{B}_{\lambda}}\right)\\
	&=\frac{\sqrt{f^{\lambda}}}{n!}\times\overline{\text{The }(S,T)\text{th entry of the matrix }\Big[\widehat{\f}(\lambda)\Big]_{\mathcal{B}_{\lambda}}}.\qedhere
\end{align*}
\end{proof}
Lemma \ref{lem:compgrp_eigval} provides that the smallest eigenvalue is $0$, and it comes for the partition $\lambda=(n)$. Also, the smallest positive eigenvalue is $\nu n$, and it arrives from the partition $\lambda=(n-1,1)$. Therefore, for any positive integer $k$, we have the following
\begin{align*}
	\nu n\leq \nu\left(\binom{n}{2}-\text{Diag}(\lambda)\right)< k\nu n\;\text{ if and only if }\;\binom{n}{2}-kn<\text{Diag}(\lambda)\leq\frac{n(n-3)}{2}.
\end{align*}
From now on, we set the following notation,
\begin{equation}\label{eq:RT-partition_range}
	\Lambda_k:=\Bigg\{\lambda\vdash n: {n\choose 2}-nk<\text{Diag}(\lambda)\leq\frac{n(n-3)}{2}\Bigg\}.
\end{equation}
Therefore, Lemma \ref{lem:compgrp_boolfn_projec}, Lemma \ref{lem:spectral_NS}, and Lemma \ref{lem:spectral_NStable} implies the following theorem:
\begin{thm}\label{thm:compgrp_NS-NStability}
	Let $\lambda\vdash n$, and $\mathcal{B}_{\lambda}$ be any orthonormal basis of $V^{\lambda}$. Also, for every $n\geq 1$, let $\f_n:S_n\longrightarrow\{0,1\}$ be a Boolean function. Then,
	\begin{enumerate}
		\item The sequence of $\{\f_n\}_n$ is noise sensitive with respect to the interchange process on $K_n$ if and only if for all positive integer $k>0$
		\[	\lim_{n\rightarrow\infty}\;\sum_{\lambda\in\Lambda_k} \frac{f^{\lambda}}{n!^2}\sum_{S,T\in\std(\lambda)}\left|\text{The }(S,T)\text{th entry of the matrix }\Big[\widehat{\f_n}(\lambda)\Big]_{\mathcal{B}_{\lambda}}\right|^2=0.\]
		\item The sequence of $\{\f_n\}_n$ is noise stable with respect to the interchange process on $K_n$ if and only if for all $\delta>0$ there exists a positive integer $k$ such that
		\[\sup_n\;\sum_{\substack{\lambda\vdash n\\
				\lambda\notin \Lambda_k\cup\{(n)\}}} \frac{f^{\lambda}}{n!^2}\sum_{S,T\in\std(\lambda)}\left|\text{The }(S,T)\text{th entry of the matrix }\Big[\widehat{\f_n}(\lambda)\Big]_{\mathcal{B}_{\lambda}}\right|^2<\delta.\]
	\end{enumerate}
\end{thm}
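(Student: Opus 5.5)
The plan is to specialise the general spectral criteria, Lemma \ref{lem:spectral_NS} and Lemma \ref{lem:spectral_NStable}, to the interchange process on $K_n$, using the explicit eigenbasis from Lemma \ref{lem:compgrp_eigval} and the Fourier coefficients from Lemma \ref{lem:compgrp_boolfn_projec}.

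\textbf{Step 1: the eigenbasis and eigenvalues.} Since the rate function $\mu_{\mathbb{K}}$ is symmetric, the chain is reversible with respect to the uniform measure. Lemma \ref{lem:compgrp_eigval} says that $\bigcup_{\lambda\vdash n}\{\psi^{ST}_{\mathcal{B}_\lambda}\}$ is an orthonormal eigenbasis of $-Q=\Delta_{\mathbb{K}}(R)$. The eigenvalue attached to $\psi^{ST}_{\mathcal{B}_\lambda}$ is $\theta_\lambda=\nu(\binom n2-\mathrm{Diag}(\lambda))$, and it depends only on $\lambda$. We need $\theta_{n,1}=\nu n$. Write $\lambda=(\lambda_1,\dots)$ with $\lambda\ne(n)$. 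Moving a box to a row of smaller index increases the content, so
\[
\mathrm{Diag}(\lambda)\le \mathrm{Diag}((n-1,1))=\binom{n-1}{2}-1=\frac{n(n-3)}{2}.
\]
Equivalently, $(n-1,1)$ dominates every other $\lambda\neq(n)$, and $\mathrm{Diag}$ is monotone in dominance order. The gap is therefore $\nu(\binom n2-\frac{n(n-3)}2)=\nu n$. In particular, $(n)$ is the only partition with eigenvalue $0$, and its eigenfunction is the constant $\psi_{n,0}$. For each integer $k$, the condition $\theta_{n,1}\le\theta_\lambda<k\theta_{n,1}$ is equivalent to $\lambda\in\Lambda_k$, by the computation displayed before \eqref{eq:RT-partition_range}.

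\textbf{Step 2: rewrite the sums.} Grouping the eigenfunctions by $\lambda$, we get
\[
\sum_{j:\theta_{n,1}\le\theta_{n,j}<k\theta_{n,1}}|\langle\f_n,\psi_{n,j}\rangle|^2=\sum_{\lambda\in\Lambda_k}\sum_{S,T\in\std(\lambda)}|\langle \f_n,\psi^{ST}_{\mathcal{B}_\lambda}\rangle|^2 .
\]
By Lemma \ref{lem:compgrp_boolfn_projec}, each summand equals $\frac{f^\lambda}{n!^2}$ times the squared modulus of the $(S,T)$ entry of $[\widehat{\f_n}(\lambda)]_{\mathcal{B}_\lambda}$; complex conjugation does not change the modulus. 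Substituting this into Lemma \ref{lem:spectral_NS} gives part (1).

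For part (2), the eigenfunctions with $\theta\ge k\theta_{n,1}$ are exactly those indexed by $\lambda\notin\Lambda_k\cup\{(n)\}$. Since $\Lambda_k$ covers precisely the range $[\theta_{n,1},k\theta_{n,1})$, the complement within the nonzero eigenvalues is $\theta\ge k\theta_{n,1}$. The same substitution into Lemma \ref{lem:spectral_NStable} then yields (2).

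\textbf{Main obstacle.} Only two points need care: identifying the spectral gap via the dominance bound on $\mathrm{Diag}$, and checking that the basis-independent grouping is legitimate. The latter holds because the eigenvalue is constant on each isotypic block, so any orthonormal $\mathcal{B}_\lambda$ works, and Lemmas \ref{lem:spectral_NS}--\ref{lem:spectral_NStable} hold for any orthonormal eigenbasis; their proofs use only Lemmas \ref{lem:cov-ns} and \ref{lem:boolean-st}. Also, Lemma \ref{lem:spectral_NS} uses $\theta_{n,1}t_{\rel}=1$, so the rate $\nu$ cancels, and the criterion is independent of $\nu$ as expected.
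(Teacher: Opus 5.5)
Your proposal is correct and takes the same route as the paper: the paper reads off the eigenvalues $\nu\bigl(\binom{n}{2}-\mathrm{Diag}(\lambda)\bigr)$ from Lemma \ref{lem:compgrp_eigval}, takes the spectral gap $\nu n$ at $(n-1,1)$, which yields $\Lambda_k$, and then substitutes Lemma \ref{lem:compgrp_boolfn_projec} into Lemmas \ref{lem:spectral_NS} and \ref{lem:spectral_NStable}. Your dominance-order argument for $\mathrm{Diag}(\lambda)\le n(n-3)/2$ when $\lambda\neq(n)$, and your remark that the criteria do not depend on the choice of orthonormal eigenbasis, supply details the paper states without justification.
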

This theorem provides a criterion for testing the noise sensitivity or noise stability with respect to the interchange process on $K_n$.
\begin{cor}\label{cor:compgrp_NS}
	Let $\lambda\vdash n$, and $\mathcal{B}_{\lambda}$ be any orthonormal basis of $V^{\lambda}$. Also, for every $n\geq 1$, let $\f_n:S_n\longrightarrow\{0,1\}$ be a Boolean function. Then, The sequence of $\{\f_n\}_n$ is noise sensitive with respect to the interchange process on $K_n$ if and only if for all positive integer $k>0$
		\[	\lim_{n\rightarrow\infty}\;\sum_{r=1}^{k}\sum_{\xi\vdash r} \frac{f^{(n-r,\xi)}}{n!^2}\hspace{-0.5cm}\sum_{S,T\in\std((n-r,\xi))}\left|\text{The }(S,T)\text{th entry of the matrix }\Big[\widehat{\f_n}((n-r,\xi))\Big]_{\mathcal{B}_{(n-r,\xi)}}\right|^2=0.\]
\end{cor}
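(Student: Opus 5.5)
The plan is to derive Corollary \ref{cor:compgrp_NS} from part (1) of Theorem \ref{thm:compgrp_NS-NStability}. The two conditions differ only in the index set of partitions: $\Lambda_k$ in the theorem, and $\{(n-r,\xi):1\le r\le k,\ \xi\vdash r\}$, i.e. $\{\lambda\vdash n,\ \lambda\neq(n),\ n-\lambda_1\le k\}$, in the corollary. Write
\[
w_n(\lambda):=\frac{f^{\lambda}}{n!^2}\sum_{S,T\in\std(\lambda)}\Big|\text{the }(S,T)\text{th entry of }\big[\widehat{\f_n}(\lambda)\big]_{\mathcal{B}_\lambda}\Big|^2\;\ge 0 .
\]
Since every $w_n(\lambda)$ is nonnegative, it suffices to prove two set inclusions, each valid for all sufficiently large $n$ with $k$ fixed:
\[
\Lambda_k\subseteq\{\lambda\neq(n):\ n-\lambda_1<2k\},\qquad
\{\lambda\neq(n):\ n-\lambda_1\le k\}\subseteq\Lambda_{k+1}.
\]
Given these, the corollary's condition for all $k$ implies the theorem's condition for all $k$ (use $2k$), and conversely (use $k+1$). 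Finitely many small $n$ do not affect the limits.

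Throughout, fix $\lambda\vdash n$ with $\lambda_1=n-r$, and use the quantity
\[
\text{gap}(\lambda):=\binom n2-\text{Diag}(\lambda).
\]
Recall that $\lambda\in\Lambda_k$ exactly when $\lambda\neq(n)$ and $\text{gap}(\lambda)<kn$.

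\emph{First inclusion.} Row $i$ contributes contents $(j-i)_{j\le\lambda_i}$, which are at most $0,1,\dots,\lambda_i-1$. Hence
\[
\text{Diag}(\lambda)\le\sum_i\binom{\lambda_i}{2}.
\]
By convexity of $x\mapsto\binom x2$, under the constraints $\sum_i\lambda_i=n$ and $\lambda_i\le n-r$, this sum is maximised by making parts as large as possible.
\begin{itemize}
\item If $r\le n/2$, the maximum is $\binom{n-r}2+\binom r2$. Therefore $\text{gap}(\lambda)\ge \binom n2-\binom{n-r}2-\binom r2=r(n-r)\ge rn/2$, so $\text{gap}(\lambda)<kn$ forces $r<2k$.
\item If $r>n/2$, then $\lambda_1<n/2$ and $\sum_i\binom{\lambda_i}2\le \frac{n}{\lambda_1}\binom{\lambda_1}{2}\le n^2/4$. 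Hence $\text{gap}(\lambda)\ge n^2/4-n/2$, which exceeds $kn$ for large $n$, so such $\lambda$ are not in $\Lambda_k$.
\end{itemize}

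\emph{Second inclusion.} Let $1\le r\le k$. The first row contributes exactly $\binom{n-r}2$. The remaining $r$ boxes lie in rows $2,\dots,r+1$ and columns $\le r$, so each has content at least $-r$. Therefore
\[
\text{Diag}(\lambda)\ge\binom{n-r}{2}-r^2,\qquad
\text{gap}(\lambda)\le rn-\tfrac{r(r+1)}2+r^2<(k+1)n
\]
for $n$ large. Since $\lambda\neq(n)$, we have $\text{gap}(\lambda)\ge n$. This is the spectral gap from Lemma \ref{lem:compgrp_eigval}, and it gives the upper constraint $\text{Diag}(\lambda)\le n(n-3)/2$ in the definition \eqref{eq:RT-partition_range}. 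Thus $\lambda\in\Lambda_{k+1}$.

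The main obstacle is the first inclusion, specifically excluding partitions with $r$ of order $n$. These are the partitions where the naive bound $r(n-r)$ degenerates as $r$ approaches $n$, and the convexity argument under the constraint $\lambda_i\le\lambda_1$ is what handles them. Everything else is bookkeeping with nonnegative sums.
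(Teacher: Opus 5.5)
Your proof is correct and follows the paper's route: you reduce to part (1) of Theorem \ref{thm:compgrp_NS-NStability} by bounding $\text{Diag}(\lambda)$ in terms of $r=n-\lambda_1$. Your two-sided sandwich $\Lambda_k\subseteq\{\lambda\neq(n):n-\lambda_1<2k\}$ and $\{\lambda\neq(n):n-\lambda_1\le k\}\subseteq\Lambda_{k+1}$ is more careful than the paper's claimed identity \eqref{eq:RT-partition_range-decomposition}, which fails at $(n-k,1^k)$ (its diagonal index is exactly $\binom{n}{2}-kn$, so it lies outside $\Lambda_k$; e.g.\ $\Lambda_1=\emptyset$) and never treats partitions with $\lambda_1<n/2$, whereas your argument handles both.
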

\begin{proof}
	For any positive integer $r$ satisfying $1\leq r\leq\frac{n}{2}$, we have
	\begin{align}\label{eq:long_1st-part_partition}
		&\text{Diag}(n-r,1^r)\leq\text{Diag}(n-r,\xi)\leq\text{Diag}(n-r,r)\nonumber\\
		\text{i.e, }\quad&\binom{n}{2}-rn\leq\text{Diag}(n-r,\xi)\leq\binom{n}{2}-rn+r^2-r,
	\end{align}
	for all $\xi\vdash r$. Thus, given an arbitrary integer $k$, and $k<r\leq\frac{n}{2}$, we have
	\[\binom{n}{2}-rn+r^2-r<\binom{n}{2}-kn,\text{ for large enough }n.\]
	Therefore, given an arbitrary integer $k$, in view of \eqref{eq:RT-partition_range} and \eqref{eq:long_1st-part_partition}, we can conclude that
	\begin{equation}\label{eq:RT-partition_range-decomposition}
		\Lambda_k=\bigcup_{r=1}^{k}\bigg\{(n-r,\xi)\vdash n:\xi\vdash r\bigg\},\text{ for sufficiently large }n.
	\end{equation}
	Thus, the corollary follows from the first part of Theorem \ref{thm:compgrp_NS-NStability}.
\end{proof}
\subsection{Noise sensitivity with respect to the interchange process on $\ST$}\label{subsec:star_graph} We now consider the interchange process on the star graph, with every edge rings with a constant rate $\nu'>0$.
The associated rate function $\mu_{\mathbb{S}}:S_n\rightarrow (0,\infty)$ is defined by
\[\mu_{\mathbb{S}}(\pi)=\begin{cases}
	\nu'&\text{ if }\pi=(i,n)\text{ for }1\leq i<n,\\
	0&\text{ otherwise.}\
\end{cases}\]
and the infinitesimal generator is given by the matrix of the operator
\[-\Delta_{\mathbb{S}}(R):=\nu'\displaystyle\sum_{i=1}^{n-1}(R_{(i,n)}-I)\]
on $\mathbb{C}^{S_n}$ with respect to the basis $\{\mathbb{1}_{\{\pi\}}:\pi\in S_n\}$. Here, $R$ is the regular representation of $S_n$ and $I$ is the identity operator. 

For every $\lambda\vdash n$, we work with the GT-basis of $V^{\lambda}$, with each vectors are normalized to have unit norm.  Thus, these normalized GT-basis vectors forms an orthonormal basis of $V^{\lambda}$, we denote this basis by notation $\mathscr{B}_{\lambda}$. We now prove some useful results.%The following lemma is immediate from the action (see \eqref{eq:ch2_action_of_YJM_elements}) of the YJM-element $Y_n$ on the GT-basis vectors of  $V^{\lambda}$.
%\begin{lem}\label{lem:stgrp_eigval_on_irr}
%	For any $\lambda\vdash n$, let $(\rho_{\lambda},V^{\lambda})$ denote the irreducible representation of $S_n$ indexed by $\lambda$. Then, $\displaystyle\sum_{i=1}^{n-1}\rho_{\lambda}((i,n))v_T=c(b_T(n))v_T$ for every $v_T\in\mathscr{B}_{\lambda}$.%, i.e., the matrix of $\Delta_{\mathbb{S}}(\rho_{\lambda}):=\nu'\displaystyle\sum_{i=1}^{n-1}(I-\rho_{\lambda}((i,n)))$ with respect to basis $\mathscr{B}_{\lambda}$ is a diagonal matrix, having the entry $\nu'\left(n-1-c(b_T(n))\right)$ at the diagonal indexed by $T\in\std(\lambda)$.
%\end{lem}
\begin{lem}\label{lem:stargrp_eigval}
	Let $\lambda\vdash n$ and $S,T\in\std(\lambda)$. For the orthonormal basis $\mathscr{B}_{\lambda}$ of $V^{\lambda}$, recall that $\psi^{ST}_{\mathscr{B}_{\lambda}}(\pi):=\sqrt{f^{\lambda}}\Tr\left(\big[\rho_{\lambda}(\pi^{-1})\big]_{\mathscr{B}_{\lambda}}E^{ST}_{\mathscr{B}_{\lambda}}\right)$ for all $\pi\in S_n$. Then, the orthonormal basis 
	\[	\underset{\lambda\vdash n}{\bigcup}\big\{\psi^{ST}_{\mathscr{B}_{\lambda}}:S,T\in\std(\lambda)\big\}\]
	is an eigenbasis of $\Delta_{\mathbb{S}}(R)$. Moreover, the eigenvalue associated to the eigenvector $\psi^{ST}_{\mathscr{B}_{\lambda}}$ is $\nu'\left(n-1-c(b_TS(n))\right),\;S,T\in\std(\lambda)$ for every $\lambda\vdash n$.
\end{lem}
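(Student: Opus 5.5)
The plan is to repeat the computation in the proof of Lemma \ref{lem:compgrp_eigval}, with the full sum of transpositions replaced by the Young--Jucys--Murphy element $Y_n$. The point is that $\sum_{i=1}^{n-1}(i,n)=Y_n$ by \eqref{eq:YJM_elements}, and $Y_n$ acts diagonally on the GT-basis by \eqref{eq:ch2_action_of_YJM_elements}. We therefore do not need Schur's lemma; instead we need diagonality in the specific basis $\mathscr{B}_{\lambda}$. (I read the index in the eigenvalue as $c(b_T(n))$, with $T$ the column index of $E^{ST}$; the index bookkeeping below is the main thing to check.)

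\textbf{Step 1: action of $Y_n$ on $V^\lambda$.} Since normalization does not affect eigenvectors, \eqref{eq:ch2_action_of_YJM_elements} gives that the matrix $\big[\sum_{i=1}^{n-1}\rho_\lambda((i,n))\big]_{\mathscr{B}_\lambda}=[\rho_\lambda(Y_n)]_{\mathscr{B}_\lambda}$ is diagonal. Its entry at $U\in\std(\lambda)$ is $c(b_U(n))$. In particular it is real and symmetric, and so is its inverse-twisted version $\sum_i\rho_\lambda((i,n)^{-1})$, because each $(i,n)$ is an involution.

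\textbf{Step 2: apply the generator to $\psi^{ST}_{\mathscr{B}_\lambda}$.} Exactly as in Lemma \ref{lem:compgrp_eigval}, for $\pi\in S_n$ we write
\[
\Delta_{\mathbb{S}}(R)\psi^{ST}_{\mathscr{B}_\lambda}(\pi)=\nu'\Big((n-1)\psi^{ST}_{\mathscr{B}_\lambda}(\pi)-\sum_{i=1}^{n-1}\psi^{ST}_{\mathscr{B}_\lambda}(\pi(i,n))\Big).
\]
Using $\rho_\lambda((\pi(i,n))^{-1})=\rho_\lambda((i,n))\rho_\lambda(\pi^{-1})$ and linearity of the trace, the second sum becomes
\[
\sqrt{f^\lambda}\,\Tr\Big(\big[\rho_\lambda(Y_n)\big]_{\mathscr{B}_\lambda}\big[\rho_\lambda(\pi^{-1})\big]_{\mathscr{B}_\lambda}E^{ST}_{\mathscr{B}_\lambda}\Big).
\]

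\textbf{Step 3: use the index structure.} By cyclicity of the trace, $\Tr(D M E^{ST})=\Tr(M E^{ST} D)$ for $D=[\rho_\lambda(Y_n)]_{\mathscr{B}_\lambda}$ and $M=[\rho_\lambda(\pi^{-1})]_{\mathscr{B}_\lambda}$. Since $D$ is diagonal, $E^{ST}D=D_{TT}E^{ST}$. Hence the trace equals $c(b_T(n))\Tr(ME^{ST})$, and the sum is $c(b_T(n))\,\psi^{ST}_{\mathscr{B}_\lambda}(\pi)$. This gives the eigenvalue $\nu'(n-1-c(b_T(n)))$.

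Since the $\psi^{ST}_{\mathscr{B}_\lambda}$ form an orthonormal basis by Lemma \ref{lem:ONB_for_G}, the family is an eigenbasis. The only real obstacle is getting the row/column convention right: the eigenvalue must depend on the tableau $T$ that sits in the column of $E^{ST}$, since $E^{ST}$ picks out the $(T,S)$ entry of $DM$. Everything else is routine.
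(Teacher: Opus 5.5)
Your proposal is correct and follows the paper's proof essentially step for step. Both arguments:
\begin{itemize}
\item rewrite $\sum_{i}\psi^{ST}_{\mathscr{B}_{\lambda}}(\pi(i,n))$ as $\sqrt{f^{\lambda}}\Tr\bigl(D\,[\rho_{\lambda}(\pi^{-1})]_{\mathscr{B}_{\lambda}}E^{ST}_{\mathscr{B}_{\lambda}}\bigr)$, where $D=[\rho_\lambda(Y_n)]_{\mathscr{B}_{\lambda}}$ is diagonal by the YJM action;
\item use cyclicity of the trace together with $E^{ST}_{\mathscr{B}_{\lambda}}D=c(b_T(n))E^{ST}_{\mathscr{B}_{\lambda}}$ to obtain the eigenvalue $\nu'(n-1-c(b_T(n)))$.
\end{itemize}
Your reading of the typo $c(b_TS(n))$ as $c(b_T(n))$, with $T$ the column index, is exactly what the paper's proof establishes.
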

\begin{proof}
	For every $\lambda\vdash n$ and $S,T\in\std(\lambda)$; we have
	\[\Delta_{\mathbb{S}}(R)\psi^{ST}_{\mathscr{B}_{\lambda}}=\nu'\displaystyle\sum_{i=1}^{n-1}(I-R_{(i,n)})\psi^{ST}_{\mathscr{B}_{\lambda}}=\nu'\displaystyle\sum_{i=1}^{n-1}(\psi^{ST}_{\mathscr{B}_{\lambda}}-R_{(i,n)}\psi^{ST}_{\mathscr{B}_{\lambda}})\]
	Therefore, for every $\pi\in S_n$, we get
	\begin{align*}
		\Delta_{\mathbb{S}}(R)\psi^{ST}_{\mathscr{B}_{\lambda}}(\pi)&=\nu'\displaystyle\sum_{i=1}^{n-1}\left(\psi^{ST}_{\mathscr{B}_{\lambda}}(\pi)-R_{(i,n)}\psi^{ST}_{\mathscr{B}_{\lambda}}(\pi)\right)\\
		&=\nu'\left((n-1)\psi^{ST}_{\mathscr{B}_{\lambda}}(\pi)-\displaystyle\sum_{i=1}^{n-1}R_{(i,n)}\psi^{ST}_{\mathscr{B}_{\lambda}}(\pi)\right)\\
		&=\nu'\left((n-1)\psi^{ST}_{\mathscr{B}_{\lambda}}(\pi)-\displaystyle\sum_{i=1}^{n-1}\psi^{ST}_{\mathscr{B}_{\lambda}}(\pi(i,n))\right)\\
		&=\nu'\left((n-1)\psi^{ST}_{\mathscr{B}_{\lambda}}(\pi)-\displaystyle\sum_{i=1}^{n-1}\sqrt{f^{\lambda}}\Tr\left(\bigg[\rho_{\lambda}((i,n)\pi^{-1})\bigg]_{\mathscr{B}_{\lambda}}E^{ST}_{\mathscr{B}_{\lambda}}\right)\right)\\
		&\qquad\qquad\text{using the linearity of trace; we obtain}\\
		&=\nu'\left((n-1)\psi^{ST}_{\mathscr{B}_{\lambda}}(\pi)-\sqrt{f^{\lambda}}\Tr\left(\displaystyle\sum_{i=1}^{n-1}\bigg[\rho_{\lambda}((i,n)\pi^{-1})\bigg]_{\mathscr{B}_{\lambda}}E^{ST}_{\mathscr{B}_{\lambda}}\right)\right)\\
		&=\nu'\left((n-1)\psi^{ST}_{\mathscr{B}_{\lambda}}(\pi)-\sqrt{f^{\lambda}}\Tr\left(\displaystyle\sum_{i=1}^{n-1}\bigg[\rho_{\lambda}((i,n))\bigg]_{\mathscr{B}_{\lambda}}\bigg[\rho_{\lambda}(\pi^{-1})\bigg]_{\mathscr{B}_{\lambda}}E^{ST}_{\mathscr{B}_{\lambda}}\right)\right)\\
		&=\nu'\left((n-1)\psi^{ST}_{\mathscr{B}_{\lambda}}(\pi)-\sqrt{f^{\lambda}}\Tr\left(\bigg[\displaystyle\sum_{i=1}^{n-1}\rho_{\lambda}((i,n))\bigg]_{\mathscr{B}_{\lambda}}\bigg[\rho_{\lambda}(\pi^{-1})\bigg]_{\mathscr{B}_{\lambda}}E^{ST}_{\mathscr{B}_{\lambda}}\right)\right)\\
		&=\nu'\left((n-1)\psi^{ST}_{\mathscr{B}_{\lambda}}(\pi)-\sqrt{f^{\lambda}}\Tr\left(\bigg[\rho_{\lambda}(\pi^{-1})\bigg]_{\mathscr{B}_{\lambda}}E^{ST}_{\mathscr{B}_{\lambda}}\bigg[\displaystyle\sum_{i=1}^{n-1}\rho_{\lambda}((i,n))\bigg]_{\mathscr{B}_{\lambda}}\right)\right)
	\end{align*}
	The last equality follows from the fact that $\Tr(AB)=\Tr(BA)$. The action (see \eqref{eq:ch2_action_of_YJM_elements}) of the YJM-element $Y_n$ on the GT-basis vectors of  $V^{\lambda}$ ensures that $\displaystyle\sum_{i=1}^{n-1}\rho_{\lambda}((i,n))v_{\alpha}=c(b_{\alpha}(n))v_{\alpha}$ for every $v_{\alpha}\in\mathscr{B}_{\lambda}$. Therefore,
	 \[E^{ST}_{\mathscr{B}_{\lambda}}\bigg[\displaystyle\sum_{i=1}^{n-1}\rho_{\lambda}((i,n))\bigg]_{\mathscr{B}_{\lambda}}=c(b_T(n))E^{ST}_{\mathscr{B}_{\lambda}}.\]
	 Thus, we obtain
	\begin{align*}
		\Delta_{\mathbb{S}}(R)\psi^{ST}_{\mathscr{B}_{\lambda}}(\pi)
		&=\nu'\left((n-1)\psi^{ST}_{\mathscr{B}_{\lambda}}(\pi)-\sqrt{f^{\lambda}}\Tr\left(\bigg[\rho_{\lambda}(\pi^{-1})\bigg]_{\mathscr{B}_{\lambda}}c(b_T(n))E^{ST}_{\mathscr{B}_{\lambda}}\right)\right)\\
		&=\nu'\left((n-1)\psi^{ST}_{\mathscr{B}_{\lambda}}(\pi)-c(b_T(n))\sqrt{f^{\lambda}}\Tr\left(\bigg[\rho_{\lambda}(\pi^{-1})\bigg]_{\mathscr{B}_{\lambda}}E^{ST}_{\mathscr{B}_{\lambda}}\right)\right)\\
		&=\nu'\left((n-1)\psi^{ST}_{\mathscr{B}_{\lambda}}(\pi)-c(b_T(n))\psi^{ST}_{\mathscr{B}_{\lambda}}(\pi)\right)=\nu'\left(n-1-c(b_T(n)\right)\psi^{ST}_{\mathscr{B}_{\lambda}}(\pi).
	\end{align*}
	Thus, we get $\Delta_{\mathbb{S}}(R)\psi^{ST}_{\mathscr{B}_{\lambda}}=\nu'\left(n-1-c(b_T(n)\right)\psi^{ST}_{\mathscr{B}_{\lambda}}$ for every $S,T\in\std(\lambda),\lambda\vdash n$. %This completes the proof.
\end{proof}
The smallest eigenvalue is $0$ and comes for the index $\begin{array}{c}\tiny\young({{\substack{1}}}{{\substack{2}}}{{\substack{\cdots}}}{{\substack{n}}})\end{array}\in\std(\begin{array}{c}\tiny\young(\;\;{{\substack{\cdots}}}\;)\end{array})$, the smallest positive eigenvalue is $\nu'$ and comes from each of the indices
\[\begin{array}{c}
	\young({{\substack{1}}}{{\substack{\cdots}}}{{\substack{i-1}}}{{\substack{i+1}}}{{\substack{\cdots}}}{{\substack{n}}},{{\substack{i}}})\end{array} \in\std\left(\begin{array}{c}{\overbrace{\young(\;\;{{\substack{\cdots}}}\;\;,\;)}^{n-1}}\end{array}\right),\;\;2\leq i\leq n-1.\]
For any positive integer $k$, we have
\[\nu'\leq \nu'\left(n-1-c(b_T(n))\right)<k\nu'\text{ if and only if } n-1-k<c(b_T(n))\leq n-2,\]
for $T\in\std(\lambda), \lambda\vdash n$.
Therefore, Lemma \ref{lem:compgrp_boolfn_projec}, Lemma \ref{lem:spectral_NS}, and Lemma \ref{lem:spectral_NStable} implies the following theorem:
\begin{thm}\label{thm:stargrp_NS-NStability}
	Let $\lambda\vdash n$, and $\mathscr{B}_{\lambda}$ be the orthonormal basis of $V^{\lambda}$ containing the normalized GT-vectors. Also, for every $n\geq 1$, let $\f_n:S_n\longrightarrow\{0,1\}$ be a Boolean function. Then,
	\begin{enumerate}
		\item The sequence of $\{\f_n\}_n$ is noise sensitive with respect to the interchange process on $\ST$ if and only if for all positive integer $k>0$
		\[	\lim_{n\rightarrow\infty}\;\sum_{\lambda\vdash n} \frac{f^{\lambda}}{n!^2}\sum_{\substack{T\in\std(\lambda)\\n-1-k<c(b_T(n))\leq n-2}}\sum_{S\in\std(\lambda)}\left|\text{The }(S,T)\text{th entry of the matrix }\Big[\widehat{\f_n}(\lambda)\Big]_{\mathscr{B}_{\lambda}}\right|^2=0.\]
		\item The sequence of $\{\f_n\}_n$ is noise stable with respect to the interchange process on $\ST$ if and only if for all $\delta>0$ there exists a positive integer $k$ such that
		\[\sup_n\;\sum_{\lambda\vdash n}\frac{f^{\lambda}}{n!^2}\sum_{\substack{T\in\std(\lambda)\\c(b_T(n))\leq n-1-k}} \sum_{S\in\std(\lambda)}\left|\text{The }(S,T)\text{th entry of the matrix }\Big[\widehat{\f_n}(\lambda)\Big]_{\mathcal{B}_{\lambda}}\right|^2<\delta.\]
	\end{enumerate}
\end{thm}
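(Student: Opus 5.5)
The plan is to treat Theorem~\ref{thm:stargrp_NS-NStability} as a direct specialization of Lemma~\ref{lem:spectral_NS} and Lemma~\ref{lem:spectral_NStable} to the star-graph chain. The eigenbasis is given by Lemma~\ref{lem:stargrp_eigval} and the Fourier coefficients by Lemma~\ref{lem:compgrp_boolfn_projec}, both taken with $\mathcal{B}_{\lambda}=\mathscr{B}_{\lambda}$, the normalized GT-basis.

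\emph{Setting up the spectral data.}
\begin{enumerate}
\item The interchange process on $\ST$ is irreducible, since the transpositions $(i,n)$, $1\le i<n$, generate $S_n$. It is reversible with respect to the uniform measure, since each generator is its own inverse and the rate function is symmetric. So the framework of Section~\ref{subsec:CTMC_bkground} applies with $-Q_n=\Delta_{\mathbb{S}}(R)$.
\item By Lemma~\ref{lem:ONB_for_G} and Lemma~\ref{lem:stargrp_eigval}, the family $\{\psi^{ST}_{\mathscr{B}_\lambda}\}$ is an orthonormal eigenbasis, and the eigenvalue of $\psi^{ST}_{\mathscr{B}_\lambda}$ is $\nu'(n-1-c(b_T(n)))$.
\item These functions are real valued, since the GT matrices \eqref{eq:Coxeter_gen._action_on_GT-basis} have real entries. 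Hence the inner products appearing in Lemmas~\ref{lem:spectral_NS}--\ref{lem:spectral_NStable} coincide with the $\langle\cdot,\cdot\rangle$ used here.
\end{enumerate}

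\emph{Identifying the relevant eigenvalue ranges.} The eigenvalue equals $0$ exactly when $c(b_T(n))=n-1$. This forces $\lambda=(n)$, so this eigenvalue is $\theta_{n,0}$ and corresponds to the constant function. The next content value, $n-2$, is attained when $n$ lies at the end of the first row of a shape $(n-1,1)$. Hence $\theta_{n,1}=\nu'$, as noted after Lemma~\ref{lem:stargrp_eigval}.

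Consequently:
\begin{itemize}
\item the condition $\theta_{n,1}\le\theta_{n,j}<k\theta_{n,1}$ is equivalent to $n-1-k<c(b_T(n))\le n-2$;
\item the condition $\theta_{n,j}\ge k\theta_{n,1}$ is equivalent to $c(b_T(n))\le n-1-k$.
\end{itemize}
The eigenvalue depends only on $T$ (the column index) and $\lambda$, so for each admissible $T$ the sum runs over all $S\in\std(\lambda)$.

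\emph{Substituting the coefficients.} By Lemma~\ref{lem:compgrp_boolfn_projec},
\[
|\langle \f_n,\psi^{ST}_{\mathscr{B}_\lambda}\rangle|^2=\frac{f^\lambda}{n!^2}\,\bigl|[\widehat{\f_n}(\lambda)]_{\mathscr{B}_\lambda}(S,T)\bigr|^2 .
\]
Summing over the index sets identified above and invoking Lemma~\ref{lem:spectral_NS} yields part (1), and invoking Lemma~\ref{lem:spectral_NStable} yields part (2).

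\emph{Main point to check.} In part (2) the constant eigenfunction must be excluded. For $k\ge 1$, the condition $c(b_T(n))\le n-1-k$ automatically rules out $\lambda=(n)$, because the only tableau of shape $(n)$ has $c(b_T(n))=n-1$. So no separate exclusion is needed; in part (1) the trivial term is likewise excluded by the constraint $c(b_T(n))\le n-2$.

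The step that needs care is the matching of eigenvalue ordering to content ranges. One must confirm that $\nu'$ really is the smallest positive eigenvalue, that is, that no $T$ other than the one-row tableau has $c(b_T(n))=n-1$. This holds because content $n-1$ requires a box in row $1$, column $n$. The rest is bookkeeping.
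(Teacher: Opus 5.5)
Your proposal is correct and is essentially the paper's argument. The paper takes the eigenbasis of Lemma~\ref{lem:stargrp_eigval} with smallest positive eigenvalue $\nu'$, translates $\nu'\le \nu'(n-1-c(b_T(n)))<k\nu'$ into $n-1-k<c(b_T(n))\le n-2$, and then combines Lemma~\ref{lem:compgrp_boolfn_projec} with Lemmas~\ref{lem:spectral_NS} and~\ref{lem:spectral_NStable}; your extra checks (irreducibility, why $\nu'$ is the spectral gap, automatic exclusion of $\lambda=(n)$) simply make explicit what the paper leaves implicit.
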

We are now in the position to proof Theorem \ref{thm:star<->complete}. The proof is given below.
\begin{proof}[Proof of Theorem \ref{thm:star<->complete}]
	Let $\lambda\vdash n$, and $\mathscr{B}_{\lambda}$ be the orthonormal basis of $V^{\lambda}$ containing the normalized GT-vectors. Throughout this proof, we use the following notation for simplicity:
	\[(\f_n)_{ST}:=\left|\text{The }(S,T)\text{th entry of the matrix }\Big[\widehat{\f_n}(\lambda)\Big]_{\mathcal{B}_{\lambda}},\;\;S,T\in\std(\lambda)\right|.\]
	
	We first assume that $\{\f_n\}_n$ is noise sensitive with respect to the interchange process on $K_n$. Also let $k$ be any arbitrary positive integer. Then, using \eqref{eq:RT-partition_range-decomposition}, we have
	\begin{equation}\label{eq:RT<->ST1}
		\Lambda_k^{\complement}\setminus\{(n)\}\subseteq \{\lambda\vdash n: c(b_{\alpha}(n))\leq n-k-2\text{ for all }\alpha\in\std(\lambda)\}\;\;\text{for sufficiently large }n.
	\end{equation}
	We note that
	\begin{align}
		\displaystyle\sum_{\lambda\vdash n}\frac{f^{\lambda}}{n!^2}\hspace*{-0.5cm}\displaystyle\sum_{\substack{T\in\std(\lambda)\\n-1-k<c(b_T(n))\leq n-2}}\sum_{S\in\std(\lambda)}\left(\left(\f_n\right)_{ST}\right)^2=&\sum_{(n)\neq\lambda\vdash n}\frac{f^{\lambda}}{n!^2}\hspace*{-0.5cm}\sum_{\substack{T\in\std(\lambda)\\n-1-k<c(b_T(n))\leq n-2}}\sum_{S\in\std(\lambda)}\left(\left(\f_n\right)_{ST}\right)^2\nonumber\\
		=&\sum_{\lambda\in \Lambda_k}\frac{f^{\lambda}}{n!^2}\hspace*{-0.5cm}\sum_{\substack{T\in\std(\lambda)\\n-1-k<c(b_T(n))\leq n-2}}\sum_{S\in\std(\lambda)}\left(\left(\f_n\right)_{ST}\right)^2\label{eq:RT<->ST2}\\
		\leq&\sum_{\lambda\in \Lambda_k}\frac{f^{\lambda}}{n!^2}\sum_{S,T\in\std(\lambda)}\left(\left(\f_n\right)_{ST}\right)^2.\label{eq:RT<->ST3}
	\end{align}
	The equality \eqref{eq:RT<->ST2} follow from the fact that $\displaystyle\sum_{\lambda\in\Lambda_k^{\complement}\setminus\{(n)\}}\displaystyle\sum_{\substack{T\in\std(\lambda)\\n-1-k<c(b_T(n))\leq n-2}}\hspace*{-1cm}$  is an empty sum (by \eqref{eq:RT<->ST1}).
	As $\{\f_n\}_n$ is noise sensitive with respect to the interchange process on $K_n$, the first part of Theorem \ref{thm:compgrp_NS-NStability} implies that the expression in \eqref{eq:RT<->ST3} goes to zero as $n\rightarrow\infty$. Therefore,
	\[\lim_{n\rightarrow\infty}\displaystyle\sum_{\lambda\vdash n}\frac{f^{\lambda}}{n!^2}\displaystyle\sum_{\substack{T\in\std(\lambda)\\n-1-k<c(b_T(n))\leq n-2}}\sum_{S\in\std(\lambda)}\left(\left(\f_n\right)_{ST}\right)^2=0,\]
	and hence, by the first part of Theorem \ref{thm:stargrp_NS-NStability}, $\{\f_n\}_n$ is noise sensitive with respect to the interchange process on $\ST$. This completes the proof of the first part.
	Now, we assume that $\{\f_n\}_n$ is noise stable with respect to the interchange process on $\ST$. Then, for any arbitrary positive integer $k$, we note that
	\begin{align}
		\displaystyle\sum_{\substack{\lambda\vdash n\\
				\lambda\notin \Lambda_k\cup\{(n)\}}} \frac{f^{\lambda}}{n!^2}\displaystyle\sum_{S,T\in\std(\lambda)}\left(\left(\f_n\right)_{ST}\right)^2=&\displaystyle\sum_{\lambda\in \Lambda_k^{\complement}\setminus\{(n)\}} \frac{f^{\lambda}}{n!^2}\sum_{S,T\in\std(\lambda)}\left(\left(\f_n\right)_{ST}\right)^2\nonumber\\
		\leq&\sum_{\substack{\lambda\vdash n:\;c(b_{\alpha}(n))\leq n-k-2\\\text{for all }\alpha\in\std(\lambda)}}\frac{f^{\lambda}}{n!^2}\sum_{\substack{T\in\std(\lambda)\\c(b_T(n))\leq n-k-2}}\sum_{S\in\std(\lambda)}\left(\left(\f_n\right)_{ST}\right)^2\label{eq:RT<->ST4}.
	\end{align}
	The inequality \eqref{eq:RT<->ST4} follows from \eqref{eq:RT<->ST1}. Also, the expression in \eqref{eq:RT<->ST4} is bounded above by
	\begin{equation}
		\sum_{\lambda\vdash n}\frac{f^{\lambda}}{n!^2}\sum_{\substack{T\in\std(\lambda)\\c(b_T(n))\leq n-k-2}} \sum_{S\in\std(\lambda)}\left(\left(\f_n\right)_{ST}\right)^2
		\leq\sum_{\lambda\vdash n}\frac{f^{\lambda}}{n!^2}\sum_{\substack{T\in\std(\lambda)\\c(b_T(n))\leq n-1-k}} \sum_{S\in\std(\lambda)}\left(\left(\f_n\right)_{ST}\right)^2\label{eq:RT<->ST5}
	\end{equation}
	Therefore \eqref{eq:RT<->ST5} implies
	\[\sup_n\displaystyle\sum_{\substack{\lambda\vdash n\\
			\lambda\notin \Lambda_k\cup\{(n)\}}} \frac{f^{\lambda}}{n!^2}\displaystyle\sum_{S,T\in\std(\lambda)}\left(\left(\f_n\right)_{ST}\right)^2\leq \sup_n\sum_{\lambda\vdash n}\frac{f^{\lambda}}{n!^2}\sum_{\substack{T\in\std(\lambda)\\c(b_T(n))\leq n-1-k}} \sum_{S\in\std(\lambda)}\left(\left(\f_n\right)_{ST}\right)^2.\]
	Thus, by the second part of Theorem \ref{thm:compgrp_NS-NStability} and Theorem \ref{thm:stargrp_NS-NStability}, the noise stability of $\{\f_n\}_n$ with respect to the interchange process on $K_n$ follows from its noise stability with respect to the interchange process on $\ST$.
	\end{proof}
%%%%%%%%%%%%%%%%%%%%%
%%%%%%%%%%%%%%%%%%%%%%%
%%%%%%%%%%%%%%%%%%%%%%%%%
\section{Proof of Theorem \ref{thm:Torus<->complete}}\label{sec:proof_of_thm:1.2}
In this section, we work with the interchange process on the $d$-dimensional discrete torus $\mathbb{T}_n^d$. Recall that the discrete $d$-dimensional torus on $N:=n^d$ vertices is the graph with vertex set $\mathbb{Z}_n^d$ and edge set $\bigl\{\{x,y\}: x-y \in \{\pm e_1,\pm e_2,\ldots,\pm e_d\}\bigr\}$, where 
\begin{align*}
	\pm e_j:=(0,\dots,0,\;&\underset{\uparrow}{\pm 1},0,\dots,0)\in\mathbb{Z}_n^d,\quad 1\leq j\leq d.\\[-1ex]
	&\hspace*{-0.45cm}j\text{th position.}
\end{align*}
To define the interchange process on $\mathbb{T}_n^d$, we label the vertices with integers from $1,\dots,N$. With this new vertex labeling, we call the edge set $\mathcal{E}$. For the interchange process on $\mathbb{T}_n^d$, we consider that every edge rings with a constant rate $\nu>0$.
The associated rate function $\mu_{\mathbb{T}}:S_N\rightarrow (0,\infty)$ is defined by
\[\mu_{\mathbb{T}}(\pi)=\begin{cases}
	\nu&\text{ if }\pi=(i,j)\text{ for }\{i,j\}\in\mathcal{E},\\
	0&\text{ otherwise.}\
\end{cases}\]
and the infinitesimal generator is given by the matrix of the operator
\[-\Delta_{\mathbb{T}}(R):=\nu\displaystyle\sum_{\{i,j\}\in\mathcal{E}}(R_{(i,j)}-I)\]
on $\mathbb{C}^{S_N}$ with respect to the basis $\{\mathbb{1}_{\{\pi\}}:\pi\in S_N\}$. Here, $R$ is the regular representation of $S_{N}$ and $I$ is the identity operator. We now compare it with the interchange process on $K_N$, with infinitesimal generator
\[-\Delta_{\mathbb{K}'}(R):=\nu\displaystyle\sum_{1\leq i<j\leq N}(R_{(i,j)}-I).\]
First, note that both interchange processes are continuous-time reversible Markov chains on $S_N$. Moreover, $\Delta_{\mathbb{K}'}(R)\Delta_{\mathbb{T}}(R)=\Delta_{\mathbb{T}}(R)\Delta_{\mathbb{K}'}(R)$. Since both operators are self-adjoint and commute, there exists an orthonormal basis of $\mathbb{C}^{S_N}$ consisting of common eigenvectors of $\Delta_{\mathbb{T}}(R)$ and $\Delta_{\mathbb{K}'}(R)$. 

For every $\lambda\vdash N$, let $(\rho_{\lambda},V^{\lambda})$ denote the irreducible representation of $S_N$ indexed by $\lambda$, and define
\[\Delta_{\mathbb{K}'}(\rho_{\lambda}):=\nu\displaystyle\sum_{1\leq i<j\leq N}(I-\rho_{\lambda}((i,j)))\quad\text{ and }\quad\Delta_{\mathbb{T}}(\rho_{\lambda}):=\nu\displaystyle\sum_{\{i,j\}\in\mathcal{E}}(I-\rho_{\lambda}((i,j))).\]
Recalling that $\mathbb{C}^{S_N}\cong\underset{\lambda\vdash N}{\oplus}f^{\lambda}V^{\lambda}$ as $S_N$-modules (see \eqref{eq:Group_alg._decom.}), choose a basis with respect to which
\begin{equation}\label{eq:torus<->complete-laplacian_decomposition}
	[\Delta_{\mathbb{K}'}(R)]=\underset{\lambda\vdash n}{\bigoplus}\left([I]_{\lambda}\bigotimes\big[\Delta_{\mathbb{K}'}(\rho_{\lambda})\big]\right)\quad\text{ and }\quad[\Delta_{\mathbb{T}}(R)]=\underset{\lambda\vdash n}{\bigoplus}\left([I]_{\lambda}\bigotimes\big[\Delta_{\mathbb{T}}(\rho_{\lambda})\big]\right)
\end{equation}
in matrix notation. Here, $\bigoplus$ denotes the direct sum of matrices, $\bigotimes$ denotes the Kronecker product of matrices, and $[I]_{\lambda}$ denotes the identity matrix of size $f^{\lambda}\times f^{\lambda}$. Consequently, the eigenvalues of $\Delta_{\mathbb{T}}(R)$ (respectively, $\Delta_{\mathbb{K}'}(R)$) are obtained from the eigenvalues of $\Delta_{\mathbb{T}}(\rho_{\lambda})$ (respectively, $\Delta_{\mathbb{K}'}(\rho_{\lambda})$) for all $\lambda\vdash N$, each counted $f^{\lambda}$ times\footnote{When the same eigenvalue occurs for different partitions, its multiplicities are summed.}. For every $\lambda\vdash N$ and $T\in\std(\lambda)$, let $\theta_{\mathbb{T}}^{\lambda,T}$ (respectively,  $\theta_{\mathbb{K}'}^{\lambda,T}$) denote the corresponding eigenvalue of $\Delta_{\mathbb{T}}(\rho_{\lambda})$ (respectively, 
$\Delta_{\mathbb{K}'}(\rho_{\lambda})$). Since $V^{\lambda}$ appears with multiplicity $f^{\lambda}$ in $\mathbb{C}^{S_N}$, we may choose common orthonormal eigenvectors $\{\psi^{ST}\in\mathbb{C}^{S_N}:S\in\std(\lambda)\}$ associated with the pair of eigenvalues $\bigl(\theta_{\mathbb{T}}^{\lambda,T}, \theta_{\mathbb{K}'}^{\lambda,T}\bigr)$. From now on, we work with the common orthonormal eigenbasis
\begin{equation}\label{eq:torus<->complete-ONB}
\underset{\lambda\vdash N}{\bigcup}\{\psi^{ST}\in\mathbb{C}^{S_N}:T,S\in\std(\lambda)\}
\end{equation}
 of $\mathbb{C}^{S_N}$.

Lemma \ref{lem:compgrp_eigval} shows that the smallest positive eigenvalue of $\Delta_{\mathbb{K}'}(R)$ is $\nu N$. The smallest positive eigenvalue of $\Delta_{\mathbb{T}}(R)$, known as the \emph{spectral gap} of the interchange process on $\mathbb{T}_n^d$, is the same as the spectral gap of the continuous-time simple random walk on $\mathbb{T}_n^d$ with each edge having weight $\nu$, by the Aldous–Caputo–Liggett–Richthammer theorem \cite[Theorem 1.1]{CLR}. Moreover, the spectral gap of the continuous-time simple random walk on $\mathbb{T}_n^d$ with edge weight $\nu$ is \[\nu\left(2-2\cos\frac{2\pi}{n}\right)=2\nu\left(1-\cos\frac{2\pi}{N^{\frac{1}{d}}}\right)\]
(see \cite[\S 12.3.1 and Corollary 12.12]{LPW}). Thus, the smallest positive eigenvalue of $\Delta_{\mathbb{T}}(R)$ is $2\nu\left(1-\cos\frac{2\pi}{N^{\frac{1}{d}}}\right)$. We now recall the following operator inequality, due to Alon and Kozma.
\begin{lem}[{\cite[Theorem 1 and Lemma 5]{AK-Octopii}}]\label{lem:comp<->torus-laplacian_comparison}
	If $t_{\emph{mix}}(1/4)$ denotes the $\frac{1}{4}$-mixing time for the discrete-time lazy simple random walk on $\mathbb{T}_n^d$, then there exists a fixed constant $\kappa$ such that the following operator inequality holds
	\[\Delta_{\mathbb{T}}(R)\geq \frac{\kappa}{8t_{\emph{mix}}(1/4)}\frac{2d}{N}\Delta_{\mathbb{K}'}(R).\]
\end{lem}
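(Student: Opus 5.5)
The plan is to reduce the operator inequality to a statement in the group algebra $\mathbb{C}[S_N]$ and then prove it by a chaining argument along the lazy random walk, with the octopus inequality as the only representation-theoretic input. By the block decomposition \eqref{eq:torus<->complete-laplacian_decomposition}, $\Delta_{\mathbb{T}}(R)$ and $\Delta_{\mathbb{K}'}(R)$ act as $[I]_{\lambda}\otimes\Delta_{\mathbb{T}}(\rho_\lambda)$ and $[I]_{\lambda}\otimes\Delta_{\mathbb{K}'}(\rho_\lambda)$. So it suffices to show that for every $\lambda\vdash N$,
\[
\Delta_{\mathbb{T}}(\rho_\lambda)-c\,\Delta_{\mathbb{K}'}(\rho_\lambda)\ \geq\ 0,\qquad c=\frac{\kappa}{8t_{\mathrm{mix}}(1/4)}\frac{2d}{N}.
\]
Equivalently, for weights $w$ on pairs of vertices set $\mathcal{L}_w:=\sum_{x<y}w(x,y)\bigl(1-(x,y)\bigr)\in\mathbb{C}[S_N]$, and say $A\geq B$ when $A-B$ is positive semidefinite in every representation. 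Then $\Delta_{\mathbb{T}}=\nu\mathcal{L}_{\mathbb{1}_{\mathcal{E}}}$ and $\Delta_{\mathbb{K}'}=\nu\mathcal{L}_{\mathbb{1}}$, and I would prove the statement in this language.

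The engine is the octopus inequality of Caputo–Liggett–Richthammer, which holds in every representation. For a centre $x$ and weights $c_y\geq 0$ it states
\[
\sum_y c_y\bigl(1-(x,y)\bigr)\ \geq\ \sum_{y<z}\frac{c_yc_z}{\sum_u c_u}\bigl(1-(y,z)\bigr).
\]
Let $P$ be the lazy simple random walk kernel on $\mathbb{T}_n^d$, so $P(x,y)=\frac{1}{4d}$ for neighbours and $P(x,x)=\frac12$. Define $\mathcal{L}_s:=\mathcal{L}_{w_s}$ with $w_s(y,z)=\sum_x P^s(x,y)P(x,z)$, the weight of ``$s$ steps then one more step''. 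Applying the octopus inequality at each centre $x$, with weights built from $P^s(x,\cdot)$ and $P(x,\cdot)$, and summing over $x$ should give a recursion of the form $\mathcal{L}_{P^{s+1}}\leq \mathcal{L}_{P^s}+C\,\mathcal{L}_{P}$, where the constant $C$ absorbs the laziness normalisation. Iterating $t$ times gives $\mathcal{L}_{P^{t}}\leq C' t\,\mathcal{L}_P$, and $\mathcal{L}_P=\frac{1}{4d}\mathcal{L}_{\mathbb{1}_{\mathcal{E}}}$ because the diagonal terms vanish. This chaining inequality is where the factor $\frac{1}{t_{\mathrm{mix}}}$ comes from.

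The step I expect to be the main obstacle is passing from $\mathcal{L}_{P^t}$ to the complete graph. This needs a pointwise lower bound $P^{t}(x,y)\geq \frac{\kappa'}{N}$ for all $x,y$, whereas the $\frac14$-mixing time only controls total variation. I would first try to take $t=2t_{\mathrm{mix}}(1/4)$ and use reversibility and symmetry:
\[
P^{2s}(x,y)=\sum_z P^s(x,z)P^s(y,z)\geq \frac1N\Bigl(1-2\,d_{TV}(s)\Bigr)\geq\frac{1}{2N}.
\]
This is the standard separation-distance trick: bound the overlap of the two distributions $P^s(x,\cdot)$ and $P^s(y,\cdot)$ via Cauchy–Schwarz. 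It yields $\mathcal{L}_{P^{2t_{\mathrm{mix}}}}\geq \frac{1}{2N}\mathcal{L}_{\mathbb{1}}$.

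Combining the two steps,
\[
\frac{1}{2N}\mathcal{L}_{\mathbb{1}}\leq \mathcal{L}_{P^{2t_{\mathrm{mix}}}}\leq 2C't_{\mathrm{mix}}\cdot\frac{1}{4d}\,\mathcal{L}_{\mathbb{1}_{\mathcal{E}}}.
\]
Rearranged, this says $\mathcal{L}_{\mathbb{1}_{\mathcal{E}}}\geq \frac{\kappa}{8t_{\mathrm{mix}}}\frac{2d}{N}\mathcal{L}_{\mathbb{1}}$, where the universal constant $\kappa$ collects $C'$ and the factor $\frac12$ from the pointwise bound. Multiplying by $\nu$ and applying $R$ gives the claimed operator inequality. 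The delicate bookkeeping is entirely in the recursion constant from the octopus step, so I would verify that step carefully, starting with the case $s=1$.
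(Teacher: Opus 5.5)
The paper gives no argument for this lemma; it imports it from Alon--Kozma. Your ingredients are the right ones: the block reduction, the octopus inequality, and a pointwise lower bound on a power of $P$. The gap is the step that carries the content of the lemma, namely that the dependence on $t_{\mathrm{mix}}$ is \emph{linear}. You only assert it, and the derivation you sketch does not produce it. Apply the octopus inequality at centre $x$ with weights $c=\alpha P^{s}(x,\cdot)+\beta P(x,\cdot)$, keep only the cross terms, and sum over $x$. This gives
\[
\mathcal{L}_{P^{s+1}}\le\Bigl(1+\frac{\alpha}{\beta}\Bigr)\mathcal{L}_{P^{s}}+\Bigl(1+\frac{\beta}{\alpha}\Bigr)\mathcal{L}_{P}.
\]
So this route never gives coefficient $1$ in front of $\mathcal{L}_{P^s}$ while keeping the coefficient of $\mathcal{L}_P$ bounded. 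Write $\mathcal{L}_{P^s}\le a_s\mathcal{L}_P$ and optimise $\alpha/\beta$ at every step. The recursion becomes $a_{s+1}=(\sqrt{a_s}+1)^2$, hence $\mathcal{L}_{P^t}\le t^2\mathcal{L}_P$, i.e.\ $t_{\mathrm{mix}}^2$ instead of $t_{\mathrm{mix}}$. That is not the stated inequality. It would also be useless in Theorem \ref{thm:Torus<->complete}, where $t_{\mathrm{mix}}\asymp N^{2/d}$ has to match the torus gap $\asymp N^{-2/d}$.

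The repair is to double rather than step. Use weights $c_y=P^{s}(x,y)$, of total mass $1$. The form of the octopus inequality that includes the centre's own weight $P^s(x,x)$ follows from the usual one, since $1-(x,x)=0$. Summing over $x$ and using the symmetry of $P$ gives exactly $\mathcal{L}_{P^{2s}}\le 2\mathcal{L}_{P^{s}}$. Iterating, $\mathcal{L}_{P^{2^j}}\le 2^j\mathcal{L}_P=\frac{2^j}{4d}\mathcal{L}_{\mathbb{1}_{\mathcal{E}}}$.

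Let $2^j$ be the least power of two with $2^j\ge 2t_{\mathrm{mix}}(1/4)$. Then $2^j<4t_{\mathrm{mix}}(1/4)$, and since $d(\cdot)$ is nonincreasing, $d(2^{j-1})\le\frac14$. Your pointwise bound also needs correcting: you claimed $\frac1N(1-2d(s))$, which does not hold in general. Cauchy--Schwarz instead gives $P^{2s}(x,y)\ge\frac1N\bigl(\sum_z\min\{P^s(x,z),P^s(y,z)\}\bigr)^2\ge\frac1N(1-2d(s))^2\ge\frac{1}{4N}$.

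Combining, and using $1-(x,y)\ge 0$ in every representation,
\[
\frac{1}{4N}\mathcal{L}_{\mathbb{1}}\le\mathcal{L}_{P^{2^j}}\le\frac{t_{\mathrm{mix}}(1/4)}{d}\mathcal{L}_{\mathbb{1}_{\mathcal{E}}}.
\]
This is the lemma with $\kappa=1$.
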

We are now in a position to prove Theorem \ref{thm:Torus<->complete}, the proof is given below.
\begin{proof}[Proof of Theorem \ref{thm:Torus<->complete}]
	We first note that the $\frac{1}{4}$-mixing time for the discrete-time lazy simple random walk on $\mathbb{T}_n^d$ satisfies
	\[t_{\text{mix}}(1/4)\leq d n^2\lceil\log_4(4d)\rceil=d N^{\frac{2}{d}}\lceil\log_4(4d)\rceil.\]
	Therefore, Lemma \ref{lem:comp<->torus-laplacian_comparison} implies that
	\[\Delta_{\mathbb{T}}(R)%\geq \frac{\kappa}{8t_{\emph{mix}}(1/4)}\frac{2d}{N}\Delta_{\mathbb{K}'}(R)
	\geq\frac{\kappa}{8d N^{\frac{2}{d}}\lceil\log_4(4d)\rceil}\frac{2d}{N}\Delta_{\mathbb{K}'}(R)=\frac{\kappa}{4 N^{\frac{2}{d}}\lceil\log_4(4d)\rceil}\frac{1}{N}\Delta_{\mathbb{K}'}(R).\]
	Moreover, using \eqref{eq:torus<->complete-laplacian_decomposition}, the above operator inequality can be written as
	\begin{equation}\label{eq:comp<->torus-laplacian_comparison}
		\big[\Delta_{\mathbb{T}}(\rho_{\lambda})\big]\geq\frac{\kappa}{4 N^{\frac{2}{d}}\lceil\log_4(4d)\rceil}\frac{1}{N}\big[\Delta_{\mathbb{K}'}(\rho_{\lambda})\big]=\frac{\kappa}{4 N^{\frac{2}{d}}\lceil\log_4(4d)\rceil}\frac{1}{N}\nu\left(\binom{N}{2}-\text{Diag}(\lambda)\right)\big[I\big]_{\lambda}
	\end{equation}
	for all $\lambda\vdash N$. The (last) equality in \eqref{eq:comp<->torus-laplacian_comparison} follows from Lemma \ref{lem:compgrp_eigval}. Here, $[I]_{\lambda}$ denotes the identity matrix of order $f^{\lambda}\times f^{\lambda}$. Thus, for any positive integer $k$, we have
	\begin{align}
		\Gamma_k^{\complement}:=&\bigg\{\lambda\vdash n: \nu\left(\binom{N}{2}-\text{Diag}(\lambda)\right)\geq\frac{16\pi^2 k\lceil\log_4(4d)\rceil}{\kappa} \nu N\bigg\}\nonumber\\
		= & \bigg\{\lambda\vdash n: \frac{\kappa}{4 N^{\frac{2}{d}}\lceil\log_4(4d)\rceil}\frac{1}{N}\nu\left(\binom{N}{2}-\text{Diag}(\lambda)\right)\geq k\frac{4\nu\pi^2}{N^{\frac{2}{d}}}\bigg\}\nonumber\\
		\subseteq & \bigg\{\lambda\vdash n: \text{all the eigenvalues of }\Delta_{\mathbb{T}}(\rho_{\lambda})\geq k4\nu\sin^2\frac{\pi}{N^{\frac{1}{d}}}=k2\nu\left(1-\cos\frac{2\pi}{N^{\frac{1}{d}}}\right)\bigg\}\label{eq:torus<->comp_key}
	\end{align}
	The inclusion in \eqref{eq:torus<->comp_key} follows from \eqref{eq:comp<->torus-laplacian_comparison}.
	
	Now, we assume that $\{\f_{n^d}\}_{n=1}^{\infty}$ is noise sensitive with respect to the interchange process on $K_{n^d}$. Our aim is to use Lemma \ref{lem:spectral_NS} with the orthonormal eigenbasis given in \eqref{eq:torus<->complete-ONB}. For any positive integer $k$, we obtain that
	\begin{align*}
		&\displaystyle\sum_{\substack{\lambda\vdash N\\\lambda\neq (N)}}\hspace*{1ex}\displaystyle\sum_{\substack{T\in\std(\lambda)\\
				2\nu\left(1-\cos\frac{2\pi}{N^{\frac{1}{d}}}\right)\leq 	\theta^{\lambda,T}_{\mathbb{T}}< k2\nu\left(1-\cos\frac{2\pi}{N^{\frac{1}{d}}}\right)}}\sum_{S\in\std(\lambda)}\left|\langle\f_N,\psi^{ST}\rangle\right|^2\nonumber\\
		\leq &\displaystyle\sum_{\lambda\in\Gamma_k\setminus\{(N)\}}\displaystyle\sum_{T\in\std(\lambda)}\sum_{S\in\std(\lambda)}\left|\langle\f_N,\psi^{ST}\rangle\right|^2,\text{ as the sum over $\Gamma_k^{\complement}$ is an empty sum, by \eqref{eq:torus<->comp_key}}\nonumber\\
		=&\displaystyle\sum_{\substack{\lambda\vdash N\\ \nu N\leq \nu\left(\binom{N}{2}-\text{Diag}(\lambda)\right)< \frac{16\pi^2 k\lceil\log_4(4d)\rceil}{\kappa} \nu N}}\sum_{S,T\in\std(\lambda)}\left|\langle\f_N,\psi^{ST}\rangle\right|^2
	\end{align*}
	But, the expression above converges to zero as $n\rightarrow \infty$, because $\{\f_{n^d}\}_{n=1}^{\infty}$ is noise sensitive with respect to the interchange process on $K_{n^d}$. Hence,
	\[\lim_{n\rightarrow\infty}\displaystyle\sum_{\substack{\lambda\vdash N\\\lambda\neq (N)}}\hspace*{1ex}\displaystyle\sum_{\substack{T\in\std(\lambda)\\
			2\nu\left(1-\cos\frac{2\pi}{N^{\frac{1}{d}}}\right)\leq 	\theta^{\lambda,T}_{\mathbb{T}}< k2\nu\left(1-\cos\frac{2\pi}{N^{\frac{1}{d}}}\right)}}\sum_{S\in\std(\lambda)}\left|\langle\f_N,\psi^{ST}\rangle\right|^2=0.\]
	Thus, the first part of the theorem follows from Lemma \ref{lem:spectral_NS}.
	
	Now, we assume that $\{\f_{n^d}\}_{n=1}^{\infty}$ is noise stable with respect to the interchange process on $\mathbb{T}_n^d$. For this case, we use Lemma \ref{lem:spectral_NStable} with the orthonormal eigenbasis given in \eqref{eq:torus<->complete-ONB}. For any positive integer $k'$, we obtain that
	\begin{align*}
		&\displaystyle\sum_{\substack{\lambda\vdash N\\  \nu\left(\binom{N}{2}-\text{Diag}(\lambda)\right)\geq k' \nu N}}\sum_{S,T\in\std(\lambda)}\left|\langle\f_N,\psi^{ST}\rangle\right|^2\\
		=&\displaystyle\sum_{\lambda\in\Gamma^{\complement}_{\hat{k}}}\displaystyle\sum_{T\in\std(\lambda)}\sum_{S\in\std(\lambda)}\left|\langle\f_N,\psi^{ST}\rangle\right|^2,\text{ where }\hat{k}=\frac{\kappa k'}{16\pi^2\lceil\log_4(4d)\rceil}\nonumber\\
		\leq&\displaystyle\sum_{\substack{\lambda\vdash N\\\lambda\neq (N)}}\hspace*{1ex}\displaystyle\sum_{\substack{T\in\std(\lambda)\\\theta^{\lambda,T}_{\mathbb{T}}\geq \hat{k}2\nu\left(1-\cos\frac{2\pi}{N^{\frac{1}{d}}}\right)}}\sum_{S\in\std(\lambda)}\left|\langle\f_N,\psi^{ST}\rangle\right|^2\text{, by \eqref{eq:torus<->comp_key}}\nonumber
	\end{align*}
	But, the expression above is bounded above by any arbitrary $\delta>0$ for some positive integer $\hat{k}$, because $\{\f_{n^d}\}_{n=1}^{\infty}$ is noise stable with respect to the interchange process on $\mathbb{T}_n^d$. Hence,
	\[\sup_n\displaystyle\sum_{\substack{\lambda\vdash N\\  \nu\left(\binom{N}{2}-\text{Diag}(\lambda)\right)\geq k' \nu N}}\sum_{S,T\in\std(\lambda)}\left|\langle\f_N,\psi^{ST}\rangle\right|^2<\delta.\]
	Thus, the second part of the theorem follows from Lemma \ref{lem:spectral_NStable}.
	\end{proof}
\section{Proof of Theorem \ref{thm:RT<-->s-cycle}}\label{sec:proof_of_thm:1.3} 
 The interchange process on $K_n$ is the continuous-time random walk on the symmetric group $S_n$ generated by all transpositions. In this section, we consider the continuous-time random walk on $S_n$ generated by all $s$-cycles ($s\ll n$ is an even), and prove Theorem \ref{thm:RT<-->s-cycle}. We denote the conjugacy class of all $s$-cycles of $S_n$ by the notation $\mathscr{C}_s$. For our continuous-time random walk on $S_n$, the associated rate function $\mu_{s}:S_n\rightarrow (0,\infty)$ is defined by
 \[\mu_{s}(\pi)=\begin{cases}
 	\nu''&\text{ if }\pi\in \mathscr{C}_s\\
 	0&\text{ otherwise.}\
 \end{cases}\]
 and the infinitesimal generator is given by the matrix of the operator
 \[-\Delta_{s}(R):=\nu''\displaystyle\sum_{\pi\in \mathscr{C}_s}(R_{\pi}-I)\]
 on $\mathbb{C}^{S_n}$ with respect to the basis $\{\mathbb{1}_{\{\pi\}}:\pi\in S_n\}$. Here, $R$ is the regular representation of $S_n$ and $I$ is the identity operator.
 %%%%%%%%%%%%%%%%%%%%%%%%%%%%
 %%%%%%%%%%%%%%%%%%%%%
 %%%%%%%%%%%%%%%%%%%%%%%
 %%%%%%%%%%%%%%%%%%%%%%%%%%%%
 %%%%%%%%%%%%%%%%%%%%%
 %%%%%%%%%%%%%%%%%%%%%%%
Using arguments similar to those in Lemma \ref{lem:compgrp_eigval_on_irr}, we obtain
 \begin{lem}\label{lem:s-cyc_eigval_on_irr}
 	For any $\lambda\vdash n$, let $(\rho_{\lambda},V^{\lambda})$ denote the irreducible representation of $S_n$ indexed by $\lambda$, we denote the corresponding irreducible character by $\chi^{\lambda}$. Then,
 	\[
 	\Delta_{s}(\rho_{\lambda}):=\nu''\displaystyle\sum_{\pi\in \mathscr{C}_s}(I-\rho_{\lambda}(\pi))=\nu''|\mathscr{C}_s|\left(1-\frac{\chi^{\lambda}(\sigma)}{f^{\lambda}}\right) I,\quad \text{i.e., }\displaystyle\sum_{\pi\in \mathscr{C}_s}\rho_{\lambda}(\pi)=|\mathscr{C}_s|\frac{\chi^{\lambda}(\sigma)}{f^{\lambda}} I.
 	\]
 \end{lem}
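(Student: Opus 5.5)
The plan is to follow the proof of Lemma \ref{lem:compgrp_eigval_on_irr} verbatim, replacing the class of transpositions $\mathcal{T}_n$ by the conjugacy class $\mathscr{C}_s$. Here $\sigma$ denotes any fixed element of $\mathscr{C}_s$. Since $\mathscr{C}_s$ is a conjugacy class, for every $\pi\in S_n$ the map $\eta\mapsto\pi\eta\pi^{-1}$ is a bijection of $\mathscr{C}_s$ onto itself. Hence
\[\rho_{\lambda}(\pi)\circ\Big(\sum_{\eta\in\mathscr{C}_s}\rho_{\lambda}(\eta)\Big)\circ\rho_{\lambda}(\pi^{-1})=\sum_{\eta\in\mathscr{C}_s}\rho_{\lambda}(\pi\eta\pi^{-1})=\sum_{\eta\in\mathscr{C}_s}\rho_{\lambda}(\eta).\]
Thus $\sum_{\eta\in\mathscr{C}_s}\rho_{\lambda}(\eta)$ commutes with $\rho_{\lambda}(\pi)$ for all $\pi\in S_n$. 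By the same computation, so does $\Delta_{s}(\rho_{\lambda})=\nu''\sum_{\eta\in\mathscr{C}_s}(I-\rho_{\lambda}(\eta))$.

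Since $V^{\lambda}$ is irreducible, Schur's lemma gives $\sum_{\eta\in\mathscr{C}_s}\rho_{\lambda}(\eta)=cI$ for some scalar $c\in\mathbb{C}$. To identify $c$, I will take traces of both sides. The left side has trace $\sum_{\eta\in\mathscr{C}_s}\chi^{\lambda}(\eta)$. Since $\chi^{\lambda}$ is a class function, every term equals $\chi^{\lambda}(\sigma)$, so the trace is $|\mathscr{C}_s|\chi^{\lambda}(\sigma)$. The right side has trace $cf^{\lambda}$. Therefore $c=|\mathscr{C}_s|\chi^{\lambda}(\sigma)/f^{\lambda}$, which is the second identity in the statement.

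The first identity then follows by linearity:
\[\nu''\sum_{\eta\in\mathscr{C}_s}(I-\rho_{\lambda}(\eta))=\nu''|\mathscr{C}_s|I-\nu''cI=\nu''|\mathscr{C}_s|\Big(1-\frac{\chi^{\lambda}(\sigma)}{f^{\lambda}}\Big)I.\]

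There is no real obstacle here. The only point needing care is that the statement uses $\sigma$ without defining it, so I will state explicitly that $\sigma$ is an arbitrary representative of $\mathscr{C}_s$, and that the formula does not depend on this choice because $\chi^{\lambda}$ is a class function. Unlike Lemma \ref{lem:compgrp_eigval_on_irr}, no closed form such as Lemma \ref{lem:Sym_char} is needed: the statement is left in terms of the character ratio $\chi^{\lambda}(\sigma)/f^{\lambda}$. Estimating that ratio is deferred to the later comparison argument.
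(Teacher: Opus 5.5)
Your proposal is correct and matches the argument the paper intends: the paper proves this lemma by ``arguments similar to those in Lemma~\ref{lem:compgrp_eigval_on_irr},'' i.e.\ conjugation invariance of the class sum, Schur's lemma, and a trace computation using that $\chi^{\lambda}$ is a class function. Your remark that $\sigma$ must be read as an arbitrary representative of $\mathscr{C}_s$ correctly fills a notational gap in the statement.
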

 Again, using Lemma \ref{lem:s-cyc_eigval_on_irr} and proceeding as in the proof of Lemma \ref{lem:compgrp_eigval}, we obtain
 \begin{lem}\label{lem:s-cyc_eigval}
 	Let $\lambda\vdash n$ and $S,T\in\std(\lambda)$. For any orthonormal basis $\mathcal{B}_{\lambda}$ of $V^{\lambda}$, recall that $\psi^{ST}_{\mathcal{B}_{\lambda}}(\pi):=\sqrt{f^{\lambda}}\Tr\left(\big[\rho_{\lambda}(\pi^{-1})\big]_{\mathcal{B}_{\lambda}}E^{ST}_{\mathcal{B}_{\lambda}}\right)$ for all $\pi\in S_n$. Then, the orthonormal basis 
 	\[	\underset{\lambda\vdash n}{\bigcup}\big\{\psi^{ST}_{\mathcal{B}_{\lambda}}:S,T\in\std(\lambda)\big\}\]
 	given in \eqref{eq:ONB_for_Sn} is an eigenbasis of $\Delta_{s}(R)$. Moreover, the eigenvalue associated to the eigenvector $\psi^{ST}_{\mathcal{B}_{\lambda}}$ is $\nu''|\mathscr{C}_s|\left(1-\frac{\chi^{\lambda}(\sigma)}{f^{\lambda}}\right),\;S,T\in\std(\lambda)$ for every $\lambda\vdash n$.
 \end{lem}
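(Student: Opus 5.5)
The plan is to copy the proof of Lemma \ref{lem:compgrp_eigval} almost verbatim, replacing the sum over transpositions by the sum over the conjugacy class $\mathscr{C}_s$ and replacing Lemma \ref{lem:compgrp_eigval_on_irr} by Lemma \ref{lem:s-cyc_eigval_on_irr}. Throughout, $\sigma$ denotes a fixed $s$-cycle. Since $\chi^{\lambda}$ is a class function, the scalar $\chi^{\lambda}(\sigma)$ does not depend on which representative of $\mathscr{C}_s$ is chosen.

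First I will fix $\lambda\vdash n$ and $S,T\in\std(\lambda)$. For each $\pi\in S_n$ I will expand
\[
\Delta_s(R)\psi^{ST}_{\mathcal{B}_{\lambda}}(\pi)=\nu''\Bigl(|\mathscr{C}_s|\,\psi^{ST}_{\mathcal{B}_{\lambda}}(\pi)-\sum_{\eta\in\mathscr{C}_s}\psi^{ST}_{\mathcal{B}_{\lambda}}(\pi\eta)\Bigr).
\]
Using the definition of $\psi^{ST}_{\mathcal{B}_{\lambda}}$, I have $\psi^{ST}_{\mathcal{B}_{\lambda}}(\pi\eta)=\sqrt{f^{\lambda}}\Tr\bigl([\rho_\lambda(\eta^{-1})][\rho_\lambda(\pi^{-1})]E^{ST}_{\mathcal{B}_{\lambda}}\bigr)$. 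By linearity of the trace, the sum over $\eta$ then produces the matrix $\sum_{\eta\in\mathscr{C}_s}\rho_\lambda(\eta^{-1})$.

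The one point that needs care is that this sum runs over inverses, whereas Lemma \ref{lem:s-cyc_eigval_on_irr} concerns $\sum_{\eta\in\mathscr{C}_s}\rho_\lambda(\eta)$. This is harmless: the inverse of an $s$-cycle is again an $s$-cycle, so $\eta\mapsto\eta^{-1}$ is a bijection of $\mathscr{C}_s$ onto itself, and the two sums coincide. Lemma \ref{lem:s-cyc_eigval_on_irr} then gives
\[
\sum_{\eta\in\mathscr{C}_s}\rho_\lambda(\eta^{-1})=|\mathscr{C}_s|\frac{\chi^{\lambda}(\sigma)}{f^{\lambda}}\,I .
\]
This is a scalar, so it factors out of the trace and leaves $|\mathscr{C}_s|\frac{\chi^\lambda(\sigma)}{f^\lambda}\psi^{ST}_{\mathcal{B}_{\lambda}}(\pi)$. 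Consequently
\[
\Delta_s(R)\psi^{ST}_{\mathcal{B}_{\lambda}}=\nu''|\mathscr{C}_s|\Bigl(1-\frac{\chi^{\lambda}(\sigma)}{f^{\lambda}}\Bigr)\psi^{ST}_{\mathcal{B}_{\lambda}} .
\]

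To finish, Lemma \ref{lem:ONB_for_G} already shows that the family $\bigcup_{\lambda\vdash n}\{\psi^{ST}_{\mathcal{B}_{\lambda}}:S,T\in\std(\lambda)\}$ is an orthonormal basis of $\mathbb{C}^{S_n}$. Since every member is an eigenvector with the stated eigenvalue, this family is an eigenbasis of $\Delta_s(R)$. There is no real obstacle; the only subtle step is the closure of $\mathscr{C}_s$ under inversion, which lets Lemma \ref{lem:s-cyc_eigval_on_irr} be applied directly.
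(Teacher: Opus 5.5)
Your proposal is correct and follows exactly the route the paper takes: it repeats the computation of Lemma \ref{lem:compgrp_eigval} with $\mathcal{T}_n$ replaced by $\mathscr{C}_s$, and with Lemma \ref{lem:compgrp_eigval_on_irr} replaced by Lemma \ref{lem:s-cyc_eigval_on_irr}. Your remark that $\eta\mapsto\eta^{-1}$ permutes $\mathscr{C}_s$ correctly supplies the one step that the transposition template hides, since there $\tau^{-1}=\tau$.
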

The smallest eigenvalue is $0$, comes for the index $\lambda=(n)$. For sufficiently large $n$, the first positive eigenvalue is $\nu''|\mathscr{C}_s|\left(1-\frac{n-s-1}{n-1}\right)$ for the index $\lambda=(n-1,1)$ (\cite[Theorem 1.2]{LXZ,MR-second_evalue}. For the other eigenvalues, we recall an estimates of the $S_n$ characters from \cite{N-limit} below.
\begin{prop}{\cite[Corollary 3.4]{N-limit}}\label{prop:s-cycle_ch._estimate}
	Let $\lambda=(\lambda_1,\dots,\lambda_{\ell})$ be a partition of $n$ with long first row, i.e., $\lambda_1=n-r$ for an integer $r$ satisfying $r+s+1<\frac{1}{3}n$. Then for large enough $n$
	\[	\frac{\chi^{\lambda}(\sigma)}{f^{\lambda}}=e^{-\frac{rs}{n}}\left(1+O\left(\frac{s}{n^2}\right)\right),\;\text{ for all }\;2\leq s\leq\frac{1}{3}n.\]
\end{prop}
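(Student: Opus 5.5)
This estimate is quoted from \cite[Corollary 3.4]{N-limit}. If I had to reprove it, I would start from Frobenius' contour-integral formula for the central character evaluated at an $s$-cycle. Write $\lambda=(n-r,\xi)$ with $\xi\vdash r$, and use shifted coordinates $x_i=\lambda_i-i$ for $1\leq i\leq \ell$. Set
\[\varphi_\lambda(z)=\prod_{i=1}^{\ell}\frac{z-x_i-1}{z-i}\]
(equivalently, the Kerov transition function). Frobenius' formula gives
\[n(n-1)\cdots(n-s+1)\,\frac{\chi^{\lambda}(\sigma)}{f^{\lambda}}=-\frac{1}{s}\,\mathrm{Res}_{z=\infty}\; z^{\underline{s}}\,\frac{\varphi_\lambda(z)}{\varphi_\lambda(z-s)},\]
where $z^{\underline{s}}$ is the falling factorial. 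Evaluating the residue by the residue theorem turns this into a finite sum of residues at the poles of $\varphi_\lambda(z)/\varphi_\lambda(z-s)$.

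The key step is to split these residues into two groups. The first group comes from the pole attached to the first row, located near $z=n-r$. This residue is essentially an explicit product,
\[\frac{(n-r)^{\underline{s}}}{n^{\underline{s}}}\prod_{i\geq 2}\frac{(n-r-x_i+\cdots)}{(n-r-x_i-s+\cdots)},\]
and its correction factors are $1+O(s/n)$-type ratios whose product over the $\ell-1\leq r$ remaining rows I would control directly. Taking logarithms gives $\log\frac{(n-r)^{\underline{s}}}{n^{\underline{s}}}=-\frac{rs}{n}+O\!\left(\frac{rs^2+r^2s}{n^2}\right)$. Combining this with the product over the other rows, where the correction terms of order $s/n$ must telescope or cancel to leading order, should produce $e^{-rs/n}(1+\text{small})$.

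The second group consists of the residues at poles coming from the rows of $\xi$. These all sit at heights at most $r$. Each such residue is bounded by a factor of the form $\bigl(\tfrac{r+s}{n-r}\bigr)^{s}$ times a combinatorial constant. The hypothesis $r+s+1<n/3$ is exactly what makes this ratio less than $1/2$, so the second group is a geometrically small sum and gets absorbed into the error term.

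I expect the main obstacle to be obtaining the error in the sharp form $O(s/n^2)$ uniformly in $2\leq s\leq n/3$. The naive Taylor expansion of the first-row term only gives $O(rs^2/n^2)$. Reaching the stated error requires exact cancellations: the falling factorial $(n-r)^{\underline{s}}/n^{\underline{s}}$ must be matched against $e^{-rs/n}$ together with the products over the other rows, rather than estimated crudely. My first attempt would be to rewrite everything as $\exp\bigl(\sum\log(1-\cdot)\bigr)$ and verify that the second-order terms cancel, as happens in Nica's asymptotic analysis. If full uniformity in $r$ proves too hard, the weaker version with $r$ fixed (equivalently $r\leq k$) is all that the proof of Theorem \ref{thm:RT<-->s-cycle} needs, since Corollary \ref{cor:compgrp_NS} only involves partitions $(n-r,\xi)$ with $r\leq k$.
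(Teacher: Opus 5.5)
The paper gives no proof of this statement; it imports the estimate from \cite[Corollary 3.4]{N-limit}. Your sketch has a genuine gap at exactly the step you flag as the main obstacle. The second-order terms do not cancel, because the estimate is false in the stated uniformity.

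Take $\lambda=(n-1,1)$, so $r=1$. Since $\chi^{(n-1,1)}(\pi)$ is the number of fixed points of $\pi$ minus one, $\chi^{\lambda}(\sigma)/f^{\lambda}=\frac{n-s-1}{n-1}$ exactly. Hence
\[
\log\frac{\chi^{\lambda}(\sigma)}{f^{\lambda}}+\frac{s}{n}=-\frac{s^{2}}{2n^{2}}+O\Big(\frac{s}{n^{2}}+\frac{s^{3}}{n^{3}}\Big).
\]
So $e^{s/n}\chi^{\lambda}(\sigma)/f^{\lambda}=1-\frac{s^{2}}{2n^{2}}(1+o(1))$ whenever $s\to\infty$ with $s=o(n)$, and this is not $1+O(s/n^{2})$. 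For $s=\lfloor n/4\rfloor$ the quantity tends to $\frac34e^{1/4}\approx 0.963$.

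More generally, for $s>r$ the exact formula below gives
\[
\log\frac{\chi^\lambda(\sigma)}{f^\lambda}=-\frac{rs}{n}-\frac{rs^2}{2n^2}+O\Big(\frac{r^2s}{n^2}+\frac{rs^3}{n^3}\Big).
\]
So the $rs^2/n^2$ term you found in $\log\frac{(n-r)^{\underline{s}}}{n^{\underline{s}}}$ is genuinely present, and no bookkeeping over the other rows removes it. Your fallback of fixing $r$ does not help, since this counterexample has $r=1$. Uniformity in $r$ fails as well: for $s=2$ and $\lambda=(n-r,r)$, Lemma~\ref{lem:Sym_char} gives $\frac{\chi^\lambda((1,2))}{f^\lambda}=1-\frac{2r(n-r+1)}{n(n-1)}$, which tends to $\frac58$ rather than $e^{-1/2}$ when $r=\lfloor n/4\rfloor$. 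What you should weaken is the error term, not the range of $r$.

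The true statement has a much shorter proof than the contour integral, and it is all that the proof of Theorem~\ref{thm:RT<-->s-cycle} uses. That proof immediately weakens the estimate to $1-\frac{rs}{n-1}+O(\frac{s^2r^2}{n^2})$ and only needs $r\le k+1$.

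Write $\lambda=(n-r,\xi)$ with $\xi\vdash r$, and suppose $s>r$.
\begin{enumerate}
\item A border strip of $\lambda$ meeting both row $1$ and a lower row contains $(1,\xi_1),\dots,(1,n-r)$. So it has length at least $n-r-\xi_1+1\geq n-2r+1>s$.
\item A strip inside $\xi$ has length at most $r<s$. Hence Murnaghan--Nakayama gives $\chi^\lambda(\sigma)=f^{(n-r-s,\xi)}$.
\item The hook length formula gives $f^{(m-r,\xi)}=\frac{f^\xi}{r!}\prod_{j=1}^r(m-c_j)$ for distinct integers $0\le c_j\le 2r-1$ independent of $m$. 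Therefore
\[
\frac{\chi^{\lambda}(\sigma)}{f^{\lambda}}=\prod_{j=1}^{r}\Big(1-\frac{s}{n-c_j}\Big).
\]
\end{enumerate}
This yields $1-\frac{rs}{n}+O\bigl(\frac{r^2s^2}{n^2}\bigr)$ uniformly, together with the logarithmic expansion above.

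If $s\le r$ with $r$ bounded, the additional strips inside $\xi$ contribute $O_r(n^{-s})=O_r(n^{-2})$ relative to $f^\lambda$. This gives $1-\frac{rs}{n}+O_r(n^{-2})$.

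If you do pursue the contour-integral route, recheck your normalisation. For $\lambda=(n)$ and $s=2$ the right-hand side of your displayed identity has absolute value $(n+2)(n-1)$ instead of $n(n-1)$. The standard form is $n^{\underline{s}}\chi^\lambda(\sigma)/f^\lambda=-\frac1s[z^{-1}]\bigl(z^{\underline{s}}\,\Phi(z)/\Phi(z-s)\bigr)$ with $\Phi(z)=\prod_{i\geq1}\frac{z+i}{z-\lambda_i+i}$.
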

Now, for any positive $k$, we have
\[\nu''\left|\mathscr{C}_s\right|\left(1-\frac{n-s-1}{n-1}\right)\leq\nu''|\mathscr{C}_s|\left(1-\frac{\chi^{\lambda}(\sigma)}{f^{\lambda}}\right)<k\nu''\left|\mathscr{C}_s\right|\left(1-\frac{n-s-1}{n-1}\right),\]
and set
\begin{equation}\label{eq:s-cycle_partition_range}
	\Theta_k:=\Bigg\{\lambda\vdash n: 1-\frac{ks}{n-1}<\frac{\chi^{\lambda}(\sigma)}{f^{\lambda}}\leq\frac{n-s-1}{n-1}\Bigg\}.
\end{equation}
Therefore, Lemma \ref{lem:compgrp_boolfn_projec}, Lemma \ref{lem:spectral_NS}, and Lemma \ref{lem:spectral_NStable} implies the following theorem:
\begin{thm}\label{thm:s-cyc_NS-NStability}
	Let $\lambda\vdash n$, and $\mathcal{B}_{\lambda}$ be any orthonormal basis of $V^{\lambda}$. Also, for every $n\geq 1$, let $\f_n:S_n\longrightarrow\{0,1\}$ be a Boolean function. Then,
	\begin{enumerate}
		\item The sequence of $\{\f_n\}_n$ is noise sensitive with respect to the continuous-time random walk on $S_n$ generated by all $s$-cycles ($s\ll n$ is even) if and only if for all positive integer $k>0$
		\[	\lim_{n\rightarrow\infty}\;\sum_{\lambda\in\Theta_k} \frac{f^{\lambda}}{n!^2}\sum_{S,T\in\std(\lambda)}\left|\text{The }(S,T)\text{th entry of the matrix }\Big[\widehat{\f_n}(\lambda)\Big]_{\mathcal{B}_{\lambda}}\right|^2=0.\]
		\item The sequence of $\{\f_n\}_n$ is noise stable with respect to the continuous-time random walk on $S_n$ generated by all $s$-cycles ($s\ll n$ is even) if and only if for all $\delta>0$ there exists a positive integer $k$ such that
		\[\sup_n\;\sum_{\substack{\lambda\vdash n\\
				\lambda\notin \Theta_k\cup\{(n)\}}} \frac{f^{\lambda}}{n!^2}\sum_{S,T\in\std(\lambda)}\left|\text{The }(S,T)\text{th entry of the matrix }\Big[\widehat{\f_n}(\lambda)\Big]_{\mathcal{B}_{\lambda}}\right|^2<\delta.\]
	\end{enumerate}
\end{thm}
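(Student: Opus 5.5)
This is a direct specialization of the general spectral criteria, Lemma \ref{lem:spectral_NS} and Lemma \ref{lem:spectral_NStable}, to the random walk generated by $\mathscr{C}_s$, mirroring the derivation of Theorem \ref{thm:compgrp_NS-NStability}. The plan has four steps.

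\textbf{Step 1: the spectral decomposition.} By Lemma \ref{lem:s-cyc_eigval}, the orthonormal family $\bigcup_{\lambda\vdash n}\{\psi^{ST}_{\mathcal{B}_{\lambda}}\}$ is an eigenbasis of $\Delta_s(R)=-Q_n$, and the eigenvalue depends only on $\lambda$:
\[
\theta^{\lambda}=\nu''|\mathscr{C}_s|\left(1-\frac{\chi^{\lambda}(\sigma)}{f^{\lambda}}\right).
\]
For large $n$ the smallest positive eigenvalue is $\theta_{n,1}=\nu''|\mathscr{C}_s|\frac{s}{n-1}$, attained at $\lambda=(n-1,1)$. This is the cited result of \cite{LXZ,MR-second_evalue}. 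The value $\lambda=(n)$ gives the eigenvalue $0$, and since $s$ is even, $\mathscr{C}_s$ generates $S_n$, so the walk is irreducible. Hence the hypotheses of \S\ref{subsec:CTMC_bkground} hold.

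\textbf{Step 2: the spectral windows.} Divide the window condition $\theta_{n,1}\le\theta^{\lambda}<k\theta_{n,1}$ by $\nu''|\mathscr{C}_s|$. It becomes
\[
\frac{s}{n-1}\le 1-\frac{\chi^{\lambda}(\sigma)}{f^{\lambda}}<\frac{ks}{n-1},
\]
which is exactly the condition $\lambda\in\Theta_k$ in \eqref{eq:s-cycle_partition_range}. Likewise, $\theta^{\lambda}\ge k\theta_{n,1}$ holds exactly when $\lambda\notin\Theta_k\cup\{(n)\}$. Here one uses that every $\lambda\neq(n)$ has $\theta^{\lambda}\ge\theta_{n,1}$, so every $\lambda\ne(n)$ lies either in $\Theta_k$ or in the high part.

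\textbf{Step 3: the Fourier weights.} By Lemma \ref{lem:compgrp_boolfn_projec}, which holds for any orthonormal $\mathcal{B}_\lambda$,
\[
|\langle\f_n,\psi^{ST}_{\mathcal{B}_\lambda}\rangle|^2=\frac{f^{\lambda}}{n!^2}\,\bigl|(\widehat{\f_n}(\lambda))_{ST}\bigr|^2 .
\]
Summing this over $S,T\in\std(\lambda)$ and over $\lambda$ in the relevant set turns the sums in Lemma \ref{lem:spectral_NS} and Lemma \ref{lem:spectral_NStable} into the displayed expressions in parts (1) and (2).

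\textbf{Step 4: the integer-versus-real $k$ issue.} Lemma \ref{lem:spectral_NS} quantifies over integer $k$, and the windows are defined by the real threshold $k\theta_{n,1}$. Both are monotone in $k$, so no further argument is needed. The only genuine obstacle is Step 1's identification of $\theta_{n,1}$ for all large $n$, which is imported from the cited spectral-gap result. If $s$ is allowed to grow with $n$, that result should still be applied as stated; Proposition \ref{prop:s-cycle_ch._estimate} is not needed for this theorem and will matter only when comparing $\Theta_k$ with $\Lambda_k$ in the proof of Theorem \ref{thm:RT<-->s-cycle}.
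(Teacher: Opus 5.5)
Your proposal is correct and follows the paper's route. The paper states this theorem in one line, and your Steps 1--3 fill in its details: the eigenbasis from Lemma~\ref{lem:s-cyc_eigval}, irreducibility because $s$ is even, the spectral gap at $\lambda=(n-1,1)$ from the cited result, the rewriting of the spectral windows as $\Theta_k$ and its complement, and then Lemma~\ref{lem:compgrp_boolfn_projec} together with Lemmas~\ref{lem:spectral_NS} and~\ref{lem:spectral_NStable}.

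One caveat, which the paper shares: the identification of $\theta_{n,1}$ is only available for large $n$, so the $\sup_n$ in part (2) should really be taken over $n\ge n_0$. For an exceptional small $n$, any $\lambda\neq(n)$ with $\chi^{\lambda}(\sigma)/f^{\lambda}>\frac{n-s-1}{n-1}$ lies outside $\Theta_k\cup\{(n)\}$ for every $k$, so the criterion as literally stated could fail there.
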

\begin{proof}[Proof of Theorem \ref{thm:RT<-->s-cycle}]
 	Let $k$ be any arbitrary positive integer. We first show that $\Theta_k$ is either $\Lambda_k$ or $\Lambda_{k-1}$, for sufficiently large $n$. We note that $2<s\ll n$, and $\lambda=(\lambda_1,\dots,\lambda_{\ell})\vdash n$ satisfies $r=n-\lambda_1\leq k+1<\frac{1}{3}n-s-1$. Therefore, Proposition \ref{prop:s-cycle_ch._estimate} implies 
 	\[\frac{\chi^{\lambda}(\sigma)}{f^{\lambda}}=e^{-\frac{rs}{n}}\left(1+O\left(\frac{s}{n^2}\right)\right)=1-\frac{rs}{n}+O\left(\frac{s^2r^2}{n^2}\right)=1-\frac{rs}{n-1}+O\left(\frac{s^2r^2}{n^2}\right).\]
 	Thus, $\lambda\in\Theta_{k}$ if and only if, $1-\frac{ks}{n-1}<\frac{\chi^{\lambda}(\sigma)}{f^{\lambda}}\leq 1-\frac{s}{n-1}$, i.e., if and only if,
 	\begin{align*}
 	&\;1-\frac{ks}{n-1}<1-\frac{rs}{n-1}+O\left(\frac{s^2r^2}{n^2}\right)\leq 1-\frac{s}{n-1}\text{ or }\lambda=(n-1,1),\\
 	&\text{i.e., if and only if }\;\;1+O\left(\frac{sr^2}{n}\right)\leq r<k+O\left(\frac{sr^2}{n}\right)\text{ or }r=1,
 	\end{align*}
 	i.e., if and only if, either $1\leq r\leq k-1$ or $1\leq r\leq k$. Therefore for sufficiently large $n$, \eqref{eq:RT-partition_range-decomposition} implies that $\Theta_k$ is either $\Lambda_k$ or $\Lambda_{k-1}$. Hence the theorem follows from Theorem \ref{thm:compgrp_NS-NStability} and Theorem \ref{thm:s-cyc_NS-NStability}.
 \end{proof}
\section{Examples of some noise sensitive or noise stable Boolean functions}\label{sec:examples}
We devote this section to study some examples. We also prove Theorem \ref{thm:large_cycle1} in this section. Unless otherwise stated, we work with the normalized GT-basis $\mathscr{B}_{\lambda},\lambda\vdash n$ given in Subsection \ref{subsec:star_graph}.
\begin{exmp}\label{ex:parity}
	Let $A_n$ be the alternating group, i.e., the sub group of $S_n$ consisting of all the \emph{even permutations}. The parity function $\phi_n:S_n\longrightarrow \{0,1\}$ defined by $\phi_n:=\mathbb{1}_{A_n}$ is noise sensitive with respect to the interchange process on $K_n$.
\end{exmp}
\begin{proof}
	We recall the function $\mathbf{1}:A_n\rightarrow\{0,1\}$, defined by $\mathbf{1}(\pi)=1\text{ for all }\pi\in A_n$,
	from Lemma \ref{lem:FT_of_const._1_fn}. Also, observe that $\phi_n\big|_{A_n}=\mathbf{1}$, and $\phi_n\big|_{S_n\setminus A_n}\equiv0$. Also, we know that (cf. \cite{Ruff})
	\begin{itemize}
		\item \emph{if $\lambda\neq\lambda'$ (i.e., $\lambda$ is non-self-conjugate), then the Specht module $V^{\lambda}$ is an irreducible $A_n$-module}.
		\item \emph{if $\lambda=\lambda'$ (i.e., $\lambda$ is self-conjugate), then the Specht module $V^{\lambda}$ splits into two equi dimensional is an irreducible $A_n$-module}.
	\end{itemize}
	Therefore, by Lemma \ref{lem:FT_of_const._1_fn}, we obtain $\widehat{\phi_n}(\lambda)$ is non-zero if and only if $\lambda=(n)$ or $(1^n)$, the trivial representation of $A_n$.
	
	Thus, for any arbitrary positive integer $k,\;(n),(1^n)\notin\Lambda_k$ (see \eqref{eq:RT-partition_range}) implies that
	\[	\sum_{\lambda\in\Lambda_k} \frac{f^{\lambda}}{n!^2}\sum_{S,T\in\std(\lambda)}\left|\text{The }(S,T)\text{th entry of the matrix }\Big[\widehat{\phi}_n(\rho_{\lambda})\Big]\right|^2=0.\]
	Thus the noise sensitivity follows from the first part of Theorem \ref{thm:compgrp_NS-NStability}.
\end{proof}
The next Boolean function is well studied in the combinatorics and the computer science literature (cf. \cite{EFF1,EFF2,EFF3}). Here, we study its noise  sensitivity or stability properties:
\begin{exmp}\label{ex:dictator}
	The dictator function $\phi_n:S_n\longrightarrow \{0,1\}$ defined by 
	\[\phi_n:=\mathbb{1}_{\{\pi\in S_n:\pi(n)\leq\varepsilon n<n\}}\]
	is noise stable with respect to the interchange process on $K_n$, but it is noise sensitive with respect to the interchange process on $\ST$.
\end{exmp}
\begin{proof}
	We first try to understand the support, $\text{Supp}(\phi_n):=\{\pi\in S_n:\pi(n)\leq\varepsilon n<n\}$, of $\phi_n$. Let us denote the set $\{\pi\in S_n:\pi(n)=j\}$ by $\text{Supp}_j(\phi_n)$. Therefore, we have
	\[\text{Supp}(\phi_n)=\underset{j\leq\varepsilon n}{\cup}\text{Supp}_j(\phi_n),\quad\text{ and }\quad\text{Supp}_j(\phi_n)=\{(j,n)\pi:\pi\in S_{n-1}\}.\]
	Here, $S_{n-1}\subseteq S_n$, and $S_{n-1}$ consist of permutations fixing $n$. For every $\lambda\vdash n$, let $(\rho_{\lambda},V^{\lambda})$ denote the irreducible representation of $S_n$ indexed by $\lambda$. 
	
	Now, recall the function $\mathbf{1}:S_{n-1}\rightarrow\{0,1\}$, defined by $\mathbf{1}(\pi)=1\text{ for all }\pi\in S_{n-1}$, from Lemma \ref{lem:FT_of_const._1_fn}. Then, the Fourier transformation of $\phi_n$ at $\lambda\;(\vdash n)$ is given by 
	\begin{equation}\label{eq:dictator<-->deg1_fn}
		\widehat{\phi}_n(\rho_{\lambda})=\sum_{j=1}^{\lfloor\varepsilon n\rfloor}\rho_{\lambda}((j,n))\left(\sum_{\pi\in S_{n-1}}\rho_{\lambda}(\pi)\right)=\sum_{j=1}^{\lfloor\varepsilon n\rfloor}\rho_{\lambda}((j,n))\widehat{\mathbf{1}}(\rho_{\lambda}).
	\end{equation}
	For every $\mu\vdash (n-1)$, and the irreducible representation $(\rho_{\mu},V^{\mu})$ of $S_{n-1}$, Lemma \ref{lem:FT_of_const._1_fn} implies that $\widehat{\mathbf{1}}(\rho_{\mu})$ is non-zero if and only if $\rho_{\mu}$ is trivial. Thus, \eqref{eq:dictator<-->deg1_fn} guarantees that $\widehat{\phi}_n(\rho_{\lambda})$ is non-zero if and only if the largest part of $\lambda$ is at least $n-1$. Because, the branching rule ensures that $V^{\lambda}\big\downarrow^{S_n}_{S_{n-1}}$ contains the trivial $S_{n-1}$-module if and only if $\lambda=(n), (n-1,1)$.
	Thus, we have
	\[\frac{f^{\lambda}}{n!^2}\sum_{S,T\in\std(\lambda)}\left|\text{The }(S,T)\text{th entry of the matrix }\Big[\widehat{\phi}_n(\rho_{\lambda})\Big]\right|^2=0\quad\text{ for every }\lambda\in\Lambda^{\complement}_2\setminus\{(n)\},\]
	and hence by the second part of Theorem \ref{thm:compgrp_NS-NStability}, we conclude that $\phi_n$ is noise stable with respect to the interchange process on $K_n$.
	
	We have seen that $\widehat{\phi_n}(\rho_{\lambda})=0$ if and only if $\lambda\notin\{(n),(n-1,1)\}$. Let us set
	\begin{align*}
		T_i:=&\begin{array}{c}\young({{\substack{1}}}{{\substack{2}}}{{\substack{\cdots}}}{{\substack{i-1}}}{{\substack{i+1}}}{{\substack{\cdots}}}{{\substack{n}}},{{\substack{i}}})\end{array}\in\std(n-1,1),\;\text{ for }1< i< n.\\
		T_n:=&\begin{array}{c}\young({{\substack{1}}}{{\substack{2}}}{{\substack{\cdots}}}{{\substack{\cdots}}}{{\substack{n-1}}},{{\substack{i}}})\end{array}\in\std(n-1,1)
	\end{align*}
	Moreover, for all $1<i<n$, the normalized GT-basis vector $v_{T_i}\in V^{(n-2,1)}$ under the decomposition
	\[V^{(n-1,1)}\big\downarrow_{S_{n-1}}^{S_n}\cong V^{(n-1)}\oplus V^{(n-2,1)}.\]
	Therefore, from \eqref{eq:dictator<-->deg1_fn}, we have $\widehat{\phi}_n(\rho_{(n-1,1)})\left(v_{T_i}\right)=0$ for $1<i<n$, and
	\[c(b_{T_i}(n))=\begin{cases}
		n-2&\text{ if }1<i<n,\\
		-1&\text{ if }i=n.
	\end{cases}\]
	Thus, given any arbitrary positive integer $k$, the expression 
	\[	\sum_{\lambda\vdash n} \frac{f^{\lambda}}{n!^2}\sum_{\substack{T\in\std(\lambda)\\n-1-k<c(b_T(n))\leq n-2}}\sum_{S\in\std(\lambda)}\left|\text{The }(S,T)\text{th entry of the matrix }\Big[\widehat{\phi}_n(\rho_{\lambda})\Big]\right|^2\]
	is zero. Therefore, the noise sensitivity with respect to the interchange process on the star graph $\ST$ follows from the first part of Theorem \ref{thm:stargrp_NS-NStability}.
\end{proof}

\begin{exmp}\label{ex:deg1_fn}
	The dictator function $\phi_n:S_n\longrightarrow \{0,1\}$ defined by 
	\[\phi_n:=\mathbb{1}_{\{\pi\in S_n:\pi(1)\leq\varepsilon n<n\}}\]
	is noise stable with respect to the interchange process on $\ST$.
\end{exmp}
\begin{proof}
	We first try to understand the support $\text{Supp}(\phi_n):=\{\pi\in S_n:\pi(1)\leq\varepsilon n\}$ of $\phi_n$. Let us denote the set $\{\pi\in S_n:\pi(1)=j\}$ by $\text{Supp}_j(\phi_n)$. Therefore, we have 
	\[\text{Supp}(\phi_n)=\underset{j\leq\varepsilon n}{\cup}\text{Supp}_j(\phi_n)\quad\text{ and }\quad\text{Supp}_j(\phi_n)=\{(j,n)\pi (1,n):\pi\in S_{n-1}\}.\]
	$S_{n-1}$ consisting of permutations that fix $n$. Here, we consider $S_{n-1}$ as a subgroup of $S_n$. For every $\lambda\vdash n$, let $(\rho_{\lambda},V^{\lambda})$ denote the irreducible representation of $S_n$ indexed by $\lambda$. 
	
	Now, recall the function $\mathbf{1}:S_{n-1}\rightarrow\{0,1\}$, defined by $\mathbf{1}(\pi)=1\text{ for all }\pi\in S_{n-1}$, from Lemma \ref{lem:FT_of_const._1_fn}. Then, the Fourier transformation of $\phi_n$ at $\lambda\;(\vdash n)$ is given by 
	\begin{equation}\label{eq:dictator<--deg1_fn}
		\widehat{\phi}_n(\rho_{\lambda})=\sum_{j=1}^{\lfloor\varepsilon n\rfloor}\rho_{\lambda}((j,n))\left(\sum_{\pi\in S_{n-1}}\rho_{\lambda}(\pi)\right)\rho_{\lambda}((1,n))=\sum_{j=1}^{\lfloor\varepsilon n\rfloor}\rho_{\lambda}((j,n))\widehat{\mathbf{1}}(\rho_{\lambda})\rho_{\lambda}((1,n)).
	\end{equation}
	Thus, using similar arguments given in Example \ref{ex:dictator}, we have $\widehat{\phi}_n(\rho_{\lambda})=0$ if and only if $\lambda\notin\{(n),(n-1,1)\}$. Again, $T\in\std((n))\cup\std((n-1,1))$ implies that $c(b_T(n))\in\{-1,n-2,n-1\}$. Thus, for the interchange process on $\ST$, when $k\geq 2$, the only non-zero term in 
	\begin{equation}\label{eq:ST_deg1-example}
		\sum_{\lambda\vdash n} \frac{f^{\lambda}}{n!^2}\sum_{\substack{T\in\std(\lambda)\\c(b_T(n))\leq n-1-k}}\sum_{S\in\std(\lambda)}\left|\text{The }(S,T)\text{th entry of the matrix }\Big[\widehat{\phi}_n(\rho_{\lambda})\Big]\right|^2
	\end{equation}
	comes for $\lambda=(n-1,1)$ and
	\[T=T_n:=\begin{array}{c}\young({{\substack{1}}}{{\substack{2}}}{{\substack{\cdots}}}{{\substack{n-1}}},{{\substack{n}}})\end{array}.\]
	We now focus on computing the column of $\big[\widehat{\phi}_n(\rho_{(n-1,1)})\big]$ indexed by $T_n$. We will show that the other columns of $\big[\widehat{\phi}_n(\rho_{(n-1,1)})\big]$ are scalar multiples of the column indexed by $T_n$. For $1< i<n$, we have $\widehat{\phi}_n(\rho_{(n-1,1)})\rho_{(n-1,1)}((i,i+1))=\widehat{\phi}_n(\rho_{(n-1,1)})$ as 
	\begin{itemize}
		\item $(i,i+1)$ commutes with $(1,n)$ and $(i,i+1)\in S_{n-1}$ for $1<i<n-1$,
		\item $(1,n)(n-1,n)=(1,n,n-1)=(1,n-1)(1,n)$ and $(1,n-1)\in S_{n-1}$.
	\end{itemize}
	Also, for $1<i<n$, setting
	\[T_i:=\begin{array}{c}\young({{\substack{1}}}{{\substack{2}}}{{\substack{\cdots}}}{{\substack{i-1}}}{{\substack{i+1}}}{{\substack{\cdots}}}{{\substack{n}}},{{\substack{i}}})\end{array},\]
	we have
	\begin{align*}
		&\rho_{(n-1,1)}((i,i+1)) v_{T_{i+1}}=-\frac{1}{i}\;\;v_{T_{i+1}}+\sqrt{1-\frac{1}{i^2}}\;\;v_{T_{i}},\text{ by \eqref{eq:Coxeter_gen._action_on_GT-basis}}\\
		\text{ i.e., }&\widehat{\phi}_n(\rho_{(n-1,1)})\;\rho_{(n-1,1)}((i,i+1))  v_{T_{i+1}}=-\frac{1}{i}\;\widehat{\phi}_n(\rho_{(n-1,1)})\;v_{T_{i+1}}+\sqrt{1-\frac{1}{i^2}}\;\widehat{\phi}_n(\rho_{(n-1,1)})\;v_{T_{i}},\\
		\text{ i.e., }&\widehat{\phi}_n(\rho_{(n-1,1)})\;v_{T_{i}}=\frac{i+1}{\sqrt{i^2-1}}\;\widehat{\phi}_n(\rho_{(n-1,1)})\;v_{T_{i+1}}=\sqrt{\frac{i+1}{i-1}}\;\widehat{\phi}_n(\rho_{(n-1,1)})\;v_{T_{i+1}}.
	\end{align*}
	Thus, for $1<i<n$,
	\begin{align*}
		\widehat{\phi}_n(\rho_{(n-1,1)})\;v_{T_{i}}&=\sqrt{\frac{i+1}{i}\cdot\frac{i}{i-1}}\;\widehat{\phi}_n(\rho_{(n-1,1)})\;v_{T_{i+1}}\\
		&=\sqrt{\frac{i+1}{i}\cdot\frac{i}{i-1}\times\frac{i+2}{i+1}\cdot\frac{i+1}{i}}\;\widehat{\phi}_n(\rho_{(n-1,1)})\;v_{T_{i+2}}\\
		&=\prod_{j=i}^{n-1}\sqrt{\frac{j+1}{j}\cdot\frac{j}{j-1}}\;\widehat{\phi}_n(\rho_{(n-1,1)})\;v_{T_n}=\sqrt{\frac{n(n-1)}{i(i-1)}}\;\widehat{\phi}_n(\rho_{(n-1,1)})\;v_{T_n}
	\end{align*}
	\[\text{i.e., the }T_i\text{th column of }\Big[\widehat{\phi}_n(\rho_{(n-1,1)})\Big]=\sqrt{\frac{n(n-1)}{i(i-1)}}\times T_n\text{th column of }\Big[\widehat{\phi}_n(\rho_{(n-1,1)})\Big].\]
	Now, setting 
	\[\mathcal{S}_{T_n}:=\sum_{S\in\std((n-1,1))}\left|\text{The }(S,T_n)\text{th entry of the matrix }\Big[\widehat{\phi}_n(\rho_{(n-1,1)})\Big]\right|^2,\]
	we obtain
	\begin{align}\label{eq:var_deg1_fn}
		&\sum_{S,T\in\std((n-1,1))}\left|\text{The }(S,T)\text{th entry of the matrix }\Big[\widehat{\phi}_n(\rho_{(n-1,1)})\Big]\right|^2\nonumber\\
		=&\sum_{i=2}^{n}\frac{n(n-1)}{i(i-1)}\mathcal{S}_{T_n}=(n-1)^2\mathcal{S}_{T_n}
	\end{align}
	Again, from \eqref{eq:CTMC_E-V} and Lemma \ref{lem:compgrp_boolfn_projec}, we have
	\begin{align*}
		\var(\phi_n)=&\sum_{\substack{\lambda\vdash n\\\lambda\neq (n)}}\sum_{S,T\in\std(\lambda)}|\langle\phi_n,\psi_{ST},\rangle|^2\\
		=&\sum_{\substack{\lambda\vdash n\\\lambda\neq (n)}}\frac{f^{\lambda}}{n!^2}\sum_{S,T\in\std(\lambda)}\left|\text{The }(S,T)\text{th entry of the matrix }\Big[\widehat{\phi}_n(\rho_{\lambda})\Big]\right|^2\\
		=&\frac{f^{(n-1,1)}}{n!^2}\sum_{S,T\in\std((n-1,1))}\left|\text{The }(S,T)\text{th entry of the matrix }\Big[\widehat{\phi}_n(\rho_{(n-1,1)})\Big]\right|^2\\
		=&\frac{n-1}{n!^2}\sum_{S,T\in\std((n-1,1))}\left|\text{The }(S,T)\text{th entry of the matrix }\Big[\widehat{\phi}_n(\rho_{(n-1,1)})\Big]\right|^2
	\end{align*}
	and
	\[\var(\phi_n)=\left(\frac{\lfloor\varepsilon n\rfloor(n-1)!}{n!}\right)-\left(\frac{\lfloor\varepsilon n\rfloor(n-1)!}{n!}\right)^2=\frac{\lfloor\varepsilon n\rfloor}{n}\left(1-\frac{\lfloor\varepsilon n\rfloor}{n}\right).\]
	Therefore, we have
	\begin{align*}
		&\frac{n-1}{n!^2}\sum_{S,T\in\std((n-1,1))}\left|\text{The }(S,T)\text{th entry of the matrix }\Big[\widehat{\phi}_n(\rho_{(n-1,1)})\Big]\right|^2=\frac{\lfloor\varepsilon n\rfloor}{n}\left(1-\frac{\lfloor\varepsilon n\rfloor}{n}\right)\\
		&\text{i.e., }\frac{n-1}{n!^2}(n-1)^2\mathcal{S}_{T_n}=\frac{\lfloor\varepsilon n\rfloor}{n}\left(1-\frac{\lfloor\varepsilon n\rfloor}{n}\right),\text{ by \eqref{eq:var_deg1_fn}}\\
		&\text{i.e., }\frac{n-1}{n!^2}\mathcal{S}_{T_n}=\frac{1}{(n-1)^2}\frac{\lfloor\varepsilon n\rfloor}{n}\left(1-\frac{\lfloor\varepsilon n\rfloor}{n}\right).
	\end{align*}
	Finally, for $k\geq 2$, the non-zero terms in \eqref{eq:ST_deg1-example} are given by
	\begin{align*}
		&\frac{n-1}{n!^2}\times\sum_{S\in\std((n-1,1))}\left|\text{The }(S,T_n)\text{th entry of the matrix }\Big[\widehat{\phi}_n(\rho_{(n-1,1)})\Big]\right|^2\\
		=&\frac{n-1}{n!^2}\;\mathcal{S}_{T_n}=\frac{1}{(n-1)^2}\times\frac{\lfloor\varepsilon n\rfloor}{n}\left(1-\frac{\lfloor\varepsilon n\rfloor}{n}\right).
	\end{align*}
	Thus for large enough $n$, the condition for noise stability with respect to the interchange process on $\ST$, given in the second part of Theorem \ref{thm:stargrp_NS-NStability} is satisfied for $k\geq2$.
\end{proof}

We now turn to the proof of Theorem \ref{thm:large_cycle1}. We first prove the following useful theorem.
\begin{thm}\label{thm:cycle-length_sum}
	Let $0<c<1$ be fixed, and let $\alpha_u:S_n\rightarrow [n]$ be the function defined by
	\[\alpha_u(\pi):=\text{the number of u-cycles in }\pi,\;\;\pi\in S_n\text{ and }1\leq u\leq n.\]
	Then, for every fixed positive integer $k$, we have,
	\[\sum_{\lambda\in \Lambda_k} \frac{f^{\lambda}}{n!^2}\sum_{S,T\in\std(\lambda)}\left|\text{The }(S,T)\text{th entry of the matrix }\left[\widehat{\displaystyle\sum_{j\geq cn}\alpha_j}(\rho_{\lambda})\right]\right|^2<\frac{k^2}{(n-k+1)^2},\]
	for all $n\geq 1$.
\end{thm}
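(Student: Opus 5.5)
Write $g:=\sum_{j\geq cn}\alpha_j$. The plan is to exploit the fact that $g$ is a class function. By Schur's lemma (as in the proof of Lemma \ref{lem:FT_of_const._1_fn}), $\widehat{g}(\rho_\lambda)$ commutes with every $\rho_\lambda(\pi)$, so it is the scalar matrix
\[
\widehat{g}(\rho_\lambda)=\frac{n!}{f^{\lambda}}\langle g,\chi^{\lambda}\rangle I_\lambda .
\]
The constant follows by taking traces and using Theorem \ref{thm:ch2_irreducible_character_as_ONB}. This holds in any orthonormal basis, in particular $\mathscr{B}_\lambda$. Summing the squared entries gives $f^\lambda (n!/f^\lambda)^2|\langle g,\chi^\lambda\rangle|^2$, so each $\lambda$ contributes exactly $|\langle g,\chi^\lambda\rangle|^2$ to the quantity in the statement. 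It therefore suffices to bound $\sum_{\lambda\in\Lambda_k}|\langle g,\chi^\lambda\rangle|^2$.

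Next I compute $\langle \alpha_j,\chi^\lambda\rangle$. Write $\alpha_j(\pi)=\sum_{|A|=j}\mathbb{1}[\pi|_A\text{ is a }j\text{-cycle}]$ and average over $\pi$. With $c_j$ a fixed $j$-cycle on $[j]$, this gives
\[
\langle\alpha_j,\chi^\lambda\rangle=\frac1j\,\mathbb{E}_{\sigma\in S_{n-j}}\big[\chi^\lambda(c_j\sigma)\big].
\]
The Murnaghan--Nakayama rule then gives
\[
\langle\alpha_j,\chi^\lambda\rangle=\frac1j\sum_{\xi}(-1)^{\mathrm{ht}(\xi)}\langle\chi^{\lambda\setminus\xi},\trivial\rangle_{S_{n-j}},
\]
where the sum is over rim hooks $\xi$ of length $j$ in $\lambda$. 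The only surviving terms are those with $\lambda\setminus\xi=(n-j)$. For $j=n$ this occurs exactly when $\lambda=(n-r,1^r)$ is a hook, and it gives $(-1)^r/n$.

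For $cn\le j<n$ and $\lambda\neq(n)$, I claim no such rim hook exists, at least for the relevant $\lambda$ with first row $n-r$ and small $r$. The set $\lambda\setminus(n-j)$ must contain the cell $(2,1)$ together with the cells $(1,n-j+1),\dots$ of the first row. Connectedness along the rim would force $\lambda_2\geq n-j$, which is impossible when $r<cn$. As a sanity check, for $\lambda=(n-1,1)$ one computes directly $\mathbb{E}[\alpha_j(\mathrm{fix}-1)]=1/j-1/j=0$ for $j<n$. Hence
\[
\langle g,\chi^\lambda\rangle=\begin{cases}(-1)^r/n,&\lambda=(n-r,1^r),\ r\geq1,\\ 0,&\text{otherwise, }\lambda\neq(n).\end{cases}
\]
The main obstacle is this rim-hook step: I must carefully verify that no rim hook of length $j\in[cn,n)$ leaves a one-row shape. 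For $\lambda$ with $\lambda_1$ not close to $n$, the general rim-hook check is cleaner if done directly rather than through the Diag bounds: $\lambda_2\geq n-j$ together with $\lambda_1\geq\lambda_2$ and $\lambda_1+\lambda_2+(\text{rest})=n$ must be reconciled with $\xi$ being connected and containing no $2\times2$ square. If this gets messy, I would fall back on the bound $|\langle g,\chi^\lambda\rangle|\le 1/(n-k+1)$ for every $\lambda\in\Lambda_k$, which is all the target inequality actually needs.

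Finally I count hooks in $\Lambda_k$. We have $\mathrm{Diag}(n-r,1^r)=\binom n2-rn$, and membership in $\Lambda_k$ (see \eqref{eq:RT-partition_range}) requires $1\le r<k$. So at most $k-1$ partitions contribute, each contributing $1/n^2$. This gives
\[
\sum_{\lambda\in\Lambda_k}|\langle g,\chi^\lambda\rangle|^2\le\frac{k-1}{n^2}<\frac{k^2}{(n-k+1)^2}
\]
for all $n\geq1$; when $k>n$ the right-hand side should be read with the trivial cases handled separately.
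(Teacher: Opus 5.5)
Your reduction is sound, and so is the formula
$\langle\alpha_j,\chi^\lambda\rangle=\frac1j\sum_\xi(-1)^{\mathrm{ht}(\xi)}$. Because $g$ is a class function, each $\lambda$ contributes exactly $|\langle g,\chi^\lambda\rangle|^2$. The sum runs over border strips $\xi$ of length $j$ with $\lambda\setminus\xi=(n-j)$. In effect this derives the Alon--Kozma expansion that the paper quotes.

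\textbf{The gap is the rim-hook step: your claim there is false.}

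When the first-row part of $\xi$ is nonempty, connectedness forces $\lambda_2\ge n-j+1$. The absence of a $2\times2$ block forces $\lambda_2\le n-j+1$ and $\lambda_3\le 1$. So the condition is exactly $\lambda_2=n-j+1$.

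The hypothesis $j\ge cn$ only bounds $n-j$ from above, so nothing rules this out for $j$ close to $n$. Take $2\le v\le r$, $j=n-v+1$ and $\lambda=(n-r,v,1^{r-v})$. Then $\lambda/(v-1)$ is a border strip of length $n-v+1$ and height $r-v+1$. Hence $\langle g,\chi^\lambda\rangle=(-1)^{r-v+1}/(n-v+1)\neq 0$.

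The smallest instance: $\chi^{(n-2,2)}$ equals $-1$ on cycle type $(n-1,1)$, so $\langle\alpha_{n-1},\chi^{(n-2,2)}\rangle=-1/(n-1)$. Your sanity check at $(n-1,1)$ is the case $r=1$, where no such $v$ exists, so it cannot detect this. Consequently your coefficient table and the value $(k-1)/n^2$ are wrong. The left-hand side is of order $k^2/n^2$, not $k/n^2$.

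The fallback does not close the gap either. First, the per-$\lambda$ bound $|\langle g,\chi^\lambda\rangle|\le 1/(n-k+1)$ is itself unproved. It needs the same border-strip analysis, to see that at most one $j$ contributes to each $\lambda$. Second, even granted, it only yields $|\Lambda_k|/(n-k+1)^2$. For large $n$ one has $|\Lambda_k|\approx\sum_{r\le k}p(r)$ with $p(r)$ the partition numbers, e.g. $|\Lambda_{10}|\ge 137>100$. So the stated inequality $<k^2/(n-k+1)^2$ does not follow, although the limit statement would.

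What you need is the exact support. Take $\lambda=(n-r,\mu)\in\Lambda_k$ with $n$ large.
\begin{itemize}
\item The case $\lambda_1=n-j$ would need $j=r<cn$, so it cannot occur.
\item $\lambda=(n)$ is excluded from $\Lambda_k$.
\item Hence the nonzero coefficients occur precisely for $\mu=(v,1^{r-v})$, $1\le v\le r$, each from the single value $j=n-v+1$.
\end{itemize}
This is what the paper reads off from Alon--Kozma's Theorem 3. Summing gives $\sum_{r=1}^{k}\sum_{v=1}^{r}(n-v+1)^{-2}=\sum_{r=0}^{k-1}\frac{k-r}{(n-r)^2}<\frac{k^2}{(n-k+1)^2}$.
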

\begin{proof}
	We first set the notation
	\begin{equation}\label{eq:large_cycle1.1}
		\mathfrak{a}_n:=\displaystyle\sum_{j\geq cn}\alpha_j.
	\end{equation}
	Let $(\rho_{\lambda},V^{\lambda})$ denote the irreducible representation of $S_n$ indexed by $\lambda\vdash n$; we denote the corresponding irreducible character using notation $\chi^{\lambda}$. For any $n$ and $j\geq cn$, Theorem 3 of \cite{AK} implies that
	\begin{equation}\label{eq:large_cycle1.2}
		\alpha_j=\frac{1}{j}\left(\chi^{(n)}+\sum_{0\leq i\leq 2j-n-2}(-1)^{i+1}\chi^{(j-i-1,n-j+1,1^i)}+\sum_{2j-n\leq i\leq j-1}(-1)^{i}\chi^{(n-j,j-i,1^i)}\right).
	\end{equation}
	As the function $\mathfrak{a}_n$ is constant on conjugacy classes, Schur's lemma implies that 
	\begin{equation}\label{eq:large_cycle1.3}
		\widehat{\mathfrak{a}}_n\left(\rho_{\lambda}\right):=\eta_{\lambda} [I]_{\lambda},\text{ where }\eta_{\lambda}f^{\lambda}=\sum_{\pi\in S_n}\mathfrak{a}_n(\pi)\chi^{\lambda}(\pi)=\sum_{\pi\in S_n}\chi^{\lambda}(\pi)\overline{\mathfrak{a}_n(\pi)}=n!\langle \chi^{\lambda},\mathfrak{a}_n\rangle,
	\end{equation}
	for all $\lambda\vdash n$. Here $[I]_{\lambda}$ is the identity matrix of size $f^{\lambda}\times f^{\lambda}$.
	Now, using \eqref{eq:large_cycle1.1}, we have
	\[\eta_{\lambda}=\frac{n!}{f^{\lambda}}\langle\chi^{\lambda},\mathfrak{a}_n\rangle=\frac{n!}{f^{\lambda}}\sum_{j\geq cn}\langle\chi^{\lambda},\alpha_j\rangle.\]
	Thus, the orthonormality of the irreducible characters, and \eqref{eq:large_cycle1.2} implies that $\eta_{\lambda}$ is equal to
	\begin{equation}\label{eq:large_cycle1.4}
		\frac{n!}{f^{\lambda}}\sum_{j\geq cn}\frac{1}{j}\left(\delta_{\lambda,(n)}+\hspace*{-0.25cm}\sum_{0\leq i\leq 2j-n-2}(-1)^{i+1}\delta_{\lambda,(j-i-1,n-j+1,1^i)}+\hspace*{-0.25cm}\sum_{2j-n\leq i\leq j-1}(-1)^{i}\delta_{\lambda,(n-j,j-i,1^i)}\right),
	\end{equation}
	for all $\lambda\vdash n$. Let us now choose any arbitrary positive integer $k$. Then, for  sufficiently large $n$, using \eqref{eq:RT-partition_range-decomposition}, we have
	\[\Lambda_k=\bigcup_{r=1}^{k}\Lambda_{k,r},\quad
	\text{ where }\Lambda_{k,r}=\bigg\{(n-r,\mu)\vdash n:\mu\vdash r\bigg\}.\]
	For $1\leq r\leq k$ and $\lambda:=(n-r,\mu)\in\Lambda_{k,r}$, we have 
	\[\lambda\notin\bigg\{(n)\bigg\}\bigcup\left(\underset{j\geq cn}{\bigcup}\quad\underset{2j-n\leq i\leq j-1}{\bigcup}\bigg\{(n-j,j-i,1^i)\bigg\}\right),\]
	hence, using \eqref{eq:large_cycle1.4}, we have
	\begin{align}
		\eta_{\lambda}&=\frac{n!}{f^{\lambda}}\sum_{j\geq cn}\frac{1}{j}\sum_{0\leq i\leq 2j-n-2}(-1)^{i+1}\delta_{\lambda,(j-i-1,n-j+1,1^i)}\label{eq:large_cycle1.5}\\
		&=\frac{n!}{f^{\lambda}}\sum_{j\geq n-r+1}\frac{1}{j}\sum_{0\leq i\leq 2j-n-2}(-1)^{i+1}\delta_{\lambda,(j-i-1,n-j+1,1^i)}\nonumber\\
		&=\frac{n!}{f^{\lambda}}\sum_{j\geq n-r+1}\frac{(-1)^{j-n+r}}{j}\delta_{\lambda,(n-r,n-j+1,1^{j-n+r-1})}\nonumber\\
		&=\frac{n!}{f^{\lambda}}\sum_{j\geq n-r+1}\frac{(-1)^{j-n+r}}{j}\delta_{\mu,(n-j+1,1^{j-n+r-1})}=\frac{n!}{f^{\lambda}}\sum_{v=1}^{r}\frac{(-1)^{r-v+1}}{n-v+1}\delta_{\mu,(v,1^{r-v})}\nonumber
	\end{align}
	Therefore, using \eqref{eq:large_cycle1.3}, we have
	\begin{align}
		&\sum_{\lambda\in \Lambda_k} \frac{f^{\lambda}}{n!^2}\sum_{S,T\in\std(\lambda)}\left|\text{The }(S,T)\text{th entry of the matrix }\Big[\widehat{\mathfrak{a}}_n(\rho_{\lambda})\Big]\right|^2\nonumber\\
		=&\sum_{\lambda\in \Lambda_k} \frac{f^{\lambda}}{n!^2}\;f^{\lambda}\;|\eta_{\lambda}|^2\nonumber\\
		=&\sum_{r=1}^{k}\quad\sum_{\lambda=(n-r,\mu)\in \Lambda_{k,r}} \left|\frac{f^{\lambda}}{n!}\eta_{\lambda}\right|^2\nonumber\\
		=&\sum_{r=1}^{k}\quad\sum_{\lambda=(n-r,\mu)\in \Lambda_{k,r}} \left|\sum_{v=1}^{r}\frac{(-1)^{r-v+1}}{n-v+1}\delta_{\mu,(v,1^{r-v})}\right|^2\label{eq:large_cycle1.6}
	\end{align}
	The last equality in \eqref{eq:large_cycle1.6} follows from \eqref{eq:large_cycle1.5}. Thus, we obtain,
	\begin{align}
		&\sum_{\lambda\in \Lambda_k} \frac{f^{\lambda}}{n!^2}\sum_{S,T\in\std(\lambda)}\left|\text{The }(S,T)\text{th entry of the matrix }\left[\widehat{\displaystyle\sum_{j\geq cn}\alpha_j}(\rho_{\lambda})\right]\right|^2\nonumber\\
		=&\sum_{r=1}^{k}\quad\sum_{\mu\vdash r} \left(\sum_{v=1}^{r}\frac{(-1)^{r-v+1}}{n-v+1}\delta_{\mu,(v,1^{r-v})}\right)^2\nonumber\\
		=&\sum_{r=1}^{k}\sum_{t=1}^{r} \left(\sum_{v=1}^{r}\frac{(-1)^{r-v+1}}{n-v+1}\delta_{(t,1^{r-t}),(v,1^{r-v})}\right)^2\nonumber\\
		=&\sum_{r=1}^{k}\sum_{t=1}^{r} \left(\frac{(-1)^{r-t+1}}{n-t+1}\right)^2=\sum_{r=1}^{k}\;\sum_{t=n-r+1}^{n} \frac{1}{t^2}=\sum_{r=0}^{k-1}\frac{k-r}{(n-r)^2}.\label{eq:large_cycle1.7}
	\end{align}
	Finally, the theorem follows from $\displaystyle\sum_{r=0}^{k-1}\frac{k-r}{(n-r)^2}\leq \displaystyle\sum_{r=0}^{k-1} \frac{k}{(n-r)^2}<\frac{k^2}{(n-k+1)^2}$.
\end{proof}
\begin{cor}\label{cor:cycle-length>(n/2)}
	Let $\frac{1}{2}< c<1$ be fixed. Then, the function $\phi_n:S_n\rightarrow \{0,1\}$ defined by
	\[\phi_n:=\mathbb{1}_{\{\pi\in S_n:\text{the length of the largest cycle in }\pi\text{ is at least }cn\}},\]
	is noise sensitive with respect to the interchange process on $K_n$.
\end{cor}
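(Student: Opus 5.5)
The key observation is that for $c>\frac12$ a permutation of $[n]$ can contain at most one cycle of length at least $cn$, since two such cycles would occupy more than $n$ points. Hence for every $\pi\in S_n$ we have $\sum_{j\geq cn}\alpha_j(\pi)\in\{0,1\}$, and it equals $1$ exactly when the largest cycle of $\pi$ has length at least $cn$. In other words, with the notation \eqref{eq:large_cycle1.1}, $\phi_n=\mathfrak{a}_n=\sum_{j\geq cn}\alpha_j$ as functions on $S_n$. This identification is the only place where $c>\frac12$ is used, and it is what allows the spectral computation of Theorem \ref{thm:cycle-length_sum} to be applied directly to the Boolean function $\phi_n$.

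With this in hand, we apply the first part of Theorem \ref{thm:compgrp_NS-NStability}. The Fourier coefficients $\widehat{\phi_n}(\rho_\lambda)$ coincide with $\widehat{\mathfrak{a}}_n(\rho_\lambda)$. By \eqref{eq:large_cycle1.3} these are scalar matrices, so the criterion does not depend on the choice of orthonormal bases $\mathcal{B}_\lambda$. Theorem \ref{thm:cycle-length_sum} then gives, for every fixed positive integer $k$,
\[\sum_{\lambda\in \Lambda_k} \frac{f^{\lambda}}{n!^2}\sum_{S,T\in\std(\lambda)}\left|\text{The }(S,T)\text{th entry of the matrix }\Big[\widehat{\phi_n}(\rho_{\lambda})\Big]\right|^2<\frac{k^2}{(n-k+1)^2}.\]
The right-hand side tends to $0$ as $n\to\infty$ for fixed $k$. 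Thus the condition in Theorem \ref{thm:compgrp_NS-NStability}(1) holds, and $\{\phi_n\}$ is noise sensitive with respect to the interchange process on $K_n$.

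There is a possible subtlety. The proof of Theorem \ref{thm:cycle-length_sum} uses the decomposition \eqref{eq:RT-partition_range-decomposition}, which holds only for sufficiently large $n$. This causes no difficulty, since noise sensitivity is a statement about the limit $n\to\infty$.

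For completeness, one might also note that $\phi_n$ is non-degenerate. The probability that a uniform permutation has a cycle of length at least $cn$ is $\sum_{cn\le j\le n}\frac1j\to\log(1/c)\in(0,\log 2)$, so $\var(\phi_n)$ stays bounded away from $0$. This is not needed for the corollary itself. I expect no real obstacle here. The main step is the disjointness observation that turns $\mathfrak{a}_n$ into an indicator; everything else is a direct citation of the earlier results.
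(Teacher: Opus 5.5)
Your proposal is correct and follows the paper's proof essentially verbatim. Both arguments use $c>\frac12$ to see that a permutation has at most one cycle of length at least $cn$, so that $\phi_n=\sum_{j\geq cn}\alpha_j$, and then combine Theorem~\ref{thm:cycle-length_sum} with the first part of Theorem~\ref{thm:compgrp_NS-NStability}. Your extra remarks are accurate but not needed: basis independence from the scalar Fourier coefficients, the large-$n$ caveat coming from \eqref{eq:RT-partition_range-decomposition}, and non-degeneracy via $\sum_{cn\le j\le n}\frac1j\to\log(1/c)$.
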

\begin{proof}
	We recall the function $\alpha_u:S_n\rightarrow\{1,\dots,n\}$ from Theorem \ref{thm:cycle-length_sum}. Using the fact $c>\frac{1}{2}$, we can conclude that $\alpha_j$ is a Boolean function for all $j\geq cn$. Moreover, the support of $\alpha_i$ and $\alpha_j$ are disjoint for all distinct $i,j\geq cn$. Thus, $\displaystyle\sum_{j\geq cn}\alpha_j$ is also a Boolean function, and 
	\[\phi_n=\displaystyle\sum_{j\geq cn}\alpha_j.\]
	Now for any fixed positive integer $k$, using $\displaystyle\lim_{n\rightarrow\infty}\frac{k^2}{(n-k+1)^2}=0$ and Theorem \ref{thm:cycle-length_sum}, the corollary follows from the first part of Theorem \ref{thm:compgrp_NS-NStability}.
\end{proof}
\begin{cor}\label{cor:cycle-length_geq_(n/2)}
	Let $\frac{1}{2}\leq c<1$ be fixed. Then, the function $\phi_n':S_n\rightarrow \{0,1\}$ defined by
	\[\phi_n':=\mathbb{1}_{\{\pi\in S_n:\text{the length of the largest cycle in }\pi\text{ is at least }cn\}},\]
	is noise sensitive with respect to the interchange process on $K_n$.
\end{cor}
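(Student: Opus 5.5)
The case $c>\frac{1}{2}$ is Corollary \ref{cor:cycle-length>(n/2)}, so only $c=\frac{1}{2}$ needs an argument. Then
\[\phi_n'=\mathbb{1}_{\{\pi:\text{ largest cycle has length}\geq n/2\}}.\]
The plan is to write $\phi_n'$ as the non-Boolean class function $\mathfrak{a}_n=\sum_{j\geq n/2}\alpha_j$ plus a small correction. We then bound the spectral weight on $\Lambda_k$ using Theorem \ref{thm:cycle-length_sum}, which is stated for arbitrary fixed $0<c<1$ and so applies with $c=\frac12$. The conclusion follows from the first part of Theorem \ref{thm:compgrp_NS-NStability}.

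For $j>n/2$ the functions $\alpha_j$ are $\{0,1\}$-valued with disjoint supports. The only failure of Booleanness occurs when $n$ is even and $j=n/2$: $\alpha_{n/2}$ takes the value $2$ exactly on permutations with two $n/2$-cycles. Writing $E_n$ for this set, we have
\[\phi_n'=\mathfrak{a}_n-\mathbb{1}_{E_n},\qquad |E_n|/n!=\frac{1}{2(n/2)^2}=\frac{2}{n^2}.\]
For odd $n$ there is no correction and $\phi_n'=\mathfrak{a}_n$.

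Next, use the Plancherel identity implicit in Theorem \ref{thm:compgrp_NS-NStability}. The weight of a function $g$ on $\Lambda_k$ is $W_k(g)=\sum_{\lambda\in\Lambda_k}\sum_{S,T}|\langle g,\psi^{ST}\rangle|^2$, which by Lemma \ref{lem:compgrp_boolfn_projec} is the expression in Theorem \ref{thm:compgrp_NS-NStability}. Its square root is a seminorm: it is the norm of a projection onto a subspace. Hence
\[W_k(\phi_n')^{1/2}\leq W_k(\mathfrak{a}_n)^{1/2}+W_k(\mathbb{1}_{E_n})^{1/2}.\]
Theorem \ref{thm:cycle-length_sum} gives $W_k(\mathfrak{a}_n)<k^2/(n-k+1)^2\to0$. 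For the correction, bound the projection weight by the full norm:
\[W_k(\mathbb{1}_{E_n})\leq\langle\mathbb{1}_{E_n},\mathbb{1}_{E_n}\rangle=|E_n|/n!=2/n^2\to0.\]
Together these show $W_k(\phi_n')\to0$ for every fixed $k$, which gives noise sensitivity by the first part of Theorem \ref{thm:compgrp_NS-NStability}.

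The only delicate point is confirming that Theorem \ref{thm:cycle-length_sum} applies at $c=\frac12$. Its proof uses the Alon--Kozma formula \eqref{eq:large_cycle1.2}, valid for $j\geq n/2$, and the identification of $\Lambda_k$-partitions for large $n$; neither needs $c>\frac12$. One should also check that the boundary term $j=n/2$ in \eqref{eq:large_cycle1.2} contributes no extra partitions with first row $n-r$, $r\leq k$. The partitions it contributes have first row at most $\max(j-1,n-j)=n/2$, so they lie outside $\Lambda_k$ for large $n$. If that check failed, the fallback would be to bound $\alpha_{n/2}$'s contribution directly: its weight on $\Lambda_k$ is at most $\|\alpha_{n/2}\|^2=O(1/n^2)$ by the same norm argument.
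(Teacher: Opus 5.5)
Your proof is correct. It uses the same skeleton as the paper: the decomposition $\phi_n'=\sum_{j\ge n/2}\alpha_j-\mathbb{1}_{E_n}$, with Theorem \ref{thm:cycle-length_sum} controlling the main term. The difference is in how you handle the correction term $\mathbb{1}_{E_n}$.

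The paper uses Schur's lemma to write $\widehat{\mathbb{1}_{E_n}}(\rho_\lambda)=\frac{2\,n!}{n^2}\frac{\chi^{\lambda}(\frac n2,\frac n2)}{f^\lambda}[I]_\lambda$. It then needs the Murnaghan--Nakayama bound $|\chi^\lambda(\frac n2,\frac n2)|\le 2$ on $\Lambda_k$, the count $|\Lambda_{k,r}|\le r!$, and $(a+b)^2\le 2(a^2+b^2)$. This yields $\frac{2k^2}{(n-k+1)^2}+2\bigl(\frac{4}{n^2}\bigr)^2\sum_{r=1}^k r!$.

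You instead bound the $\Lambda_k$-weight of $\mathbb{1}_{E_n}$ by its full norm via Bessel's inequality, $W_k(\mathbb{1}_{E_n})\le |E_n|/n!=2/n^2$. This needs no character values and no enumeration of $\Lambda_k$, and it is uniform in $k$. The paper's bound is sharper in $n$ ($O(n^{-4})$ versus $O(n^{-2})$), but that is irrelevant for the limit.

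Your split into $c>\frac12$ (the preceding corollary) and $c=\frac12$ is also more careful than the paper. The paper's identity $\phi_n'=\sum_{j\ge cn}\alpha_j-\mathbb{1}_{\{\text{cycle type }(\frac n2,\frac n2)\}}$ holds only at $c=\frac12$.

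Your check that Theorem \ref{thm:cycle-length_sum} applies at $c=\frac12$ is right. With $j=n/2$ in \eqref{eq:large_cycle1.2}, the first sum is empty and the second contributes only partitions with first row $n/2$, which lie outside $\Lambda_k$ for large $n$.
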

\begin{proof}
	We can write $\phi_n'$ as follows
	\[\phi_n'=\displaystyle\sum_{j\geq cn}\alpha_j-\mathbb{1}_{\{\pi\in S_n:\text{the cycle type of }\pi\text{ is }(\frac{n}{2},\frac{n}{2})\}}.\]
	For every $\lambda\vdash n$, let $(\rho_{\lambda},V^{\lambda})$ denote the irreducible representation of $S_n$ indexed by $\lambda$; we denote the corresponding irreducible character using notation $\chi^{\lambda}$.
	Therefore, using the Schur's lemma, and the notations from \eqref{eq:large_cycle1.3}, we have 
	\[\widehat{\phi'}_n\left(\rho_{\lambda}\right)=\left(\eta_{\lambda}-\frac{2n!}{n^2}\frac{\chi^{\lambda}(\frac{n}{2},\frac{n}{2})}{f^{\lambda}}\right) [I]_{\lambda}.\]
	Now for any fixed positive integer $k$, using Theorem \ref{thm:cycle-length_sum}, from \eqref{eq:large_cycle1.3}, we obtain
	\begin{align}
		&\sum_{\lambda\in \Lambda_k} \frac{f^{\lambda}}{n!^2}\sum_{S,T\in\std(\lambda)}\left|\text{The }(S,T)\text{th entry of the matrix }\Big[\widehat{\displaystyle\sum_{j\geq cn}\alpha_j}(\rho_{\lambda})\Big]\right|^2\nonumber\\
		=&\sum_{\lambda\in \Lambda_k} \left|\frac{f^{\lambda}}{n!}\eta_{\lambda}\right|^2\leq\frac{k^2}{(n-k+1)^2}.\label{eq:long_cycle1.1}
	\end{align}
	Therefore,
	\begin{align}
		&\sum_{\lambda\in \Lambda_k} \frac{f^{\lambda}}{n!^2}\sum_{S,T\in\std(\lambda)}\left|\text{The }(S,T)\text{th entry of the matrix }\Big[\widehat{\phi'}_n(\rho_{\lambda})\Big]\right|^2\nonumber\\
		=&\sum_{\lambda\in \Lambda_k} \frac{\left(f^{\lambda}\right)^2}{n!^2}\left|\eta_{\lambda}-\frac{2n!}{n^2}\frac{\chi^{\lambda}(\frac{n}{2},\frac{n}{2})}{f^{\lambda}}\right|^2\nonumber\\
		=&\sum_{\lambda\in \Lambda_k}\left|\frac{f^{\lambda}}{n!}\eta_{\lambda}-\frac{2\chi^{\lambda}(\frac{n}{2},\frac{n}{2})}{n^2}\right|^2\nonumber\\
		\leq &\sum_{\lambda\in \Lambda_k}2\left(\left|\frac{f^{\lambda}}{n!}\eta_{\lambda}\right|^2+\left|\frac{2\chi^{\lambda}(\frac{n}{2},\frac{n}{2})}{n^2}\right|^2\right)\nonumber\\
		\leq& \frac{2k^2}{(n-k+1)^2}+2\left(\frac{4}{n^2}\right)^2\sum_{r=1}^{k}r!\label{eq:long_cycle1.2}
	\end{align}
	The inequality in \eqref{eq:long_cycle1.2} follows from \eqref{eq:long_cycle1.1}, \eqref{eq:RT-partition_range-decomposition}, $|\Lambda_{k,r}|\leq r!$, and 
	\[\left|\chi^{\lambda}\left(\frac{n}{2},\frac{n}{2}\right)\right|\leq2\text{ for all }\lambda\in\Lambda_k\;\text{ (by \emph{Murnaghan-Nakayama} rule)}.\]
	Therefore, the theorem follows from the first part of Theorem \ref{thm:compgrp_NS-NStability}, and the fact that the right hand side of \eqref{eq:long_cycle1.2} goes to zero as $n\rightarrow\infty$.
\end{proof}
The aforementioned two corollaries illustrate the roadmap for the proof of Theorem \ref{thm:large_cycle1}.
\begin{proof}[Proof of Theorem \ref{thm:large_cycle1}]
	We first recall that the \emph{cycle type} of a permutation in $S_n$ is the partition of $n$ whose parts are precisely the lengths of the cycles in its cycle decomposition. We now introduce the following notation
	\begin{align*}
		\alpha_u(\pi)&:=\text{the number of }u\text{-cycles in }\pi,\;\;\pi\in S_n\text{ and }1\leq u\leq n,\\
		\xi_n&:=\mathbb{1}_{\{\pi\in S_n:\text{the length of the largest cycle in }\pi\text{ is at least }cn\}},\\
		M_{\mu}&:=\left|\left\{i\in [\ell]:\mu_i\geq cn\right\}\right|\leq\frac{1}{c}\;\text{ for all }\mu=(\mu_1,\mu_2,\dots,\mu_{\ell})\vdash n,\\
		\beta_{\mu}&:=\mathbb{1}_{\{\pi\in S_n:\text{the cycle type of }\pi\text{ is }\mu\}}\;\text{ for all }\mu\vdash n,\\
		\mathscr{C}_{\mu}&:=\{\pi\in S_n:\text{the cycle type of }\pi\text{ is }\mu\}\;\text{ for all }\mu\vdash n.
	\end{align*}
	Given any $\pi\in S_n$ whose longest cycle has length at least $cn$, we can observe that
	\[M_{\text{cycle type of }\pi}=\sum_{j\geq cn}\alpha_j.\]
	Therefore, we have
	\[\xi_n=\sum_{j\geq cn}\alpha_j-\sum_{\mu\vdash n:\;M_{\mu}\geq 2}(M_{\mu}-1)\beta_{\mu}.\]
	For every $\lambda\vdash n$, let $(\rho_{\lambda},V^{\lambda})$ denote the irreducible representation of $S_n$ indexed by $\lambda$; we denote the corresponding irreducible character using notation $\chi^{\lambda}$.
	Therefore, by Schur's lemma, and the notations from \eqref{eq:large_cycle1.3}, we have 
	\begin{equation}\label{eq:long_cycle-final1.1}
		\widehat{\xi}_n\left(\rho_{\lambda}\right)=\left(\eta_{\lambda}-\sum_{\mu\vdash n:\;M_{\mu}\geq 2}(M_{\mu}-1)|\mathscr{C}_{\mu}|\frac{\chi^{\lambda}(\mu)}{f^{\lambda}}\right) [I]_{\lambda}.
	\end{equation}
	Here, $\chi^{\lambda}(\mu)$ denotes the value of the irreducible character $\chi^{\lambda}$ at any permutation of cycle type $\mu$, and $[I]_{\lambda}$ denotes the identity matrix of size $f^{\lambda}\times f^{\lambda}$. Now for any fixed positive integer $k$, using Theorem \ref{thm:cycle-length_sum}, from \eqref{eq:large_cycle1.3}, we obtain
	\begin{align}
		&\sum_{\lambda\in \Lambda_k} \frac{f^{\lambda}}{n!^2}\sum_{S,T\in\std(\lambda)}\left|\text{The }(S,T)\text{th entry of the matrix }\Big[\widehat{\displaystyle\sum_{j\geq cn}\alpha_j}(\rho_{\lambda})\Big]\right|^2\nonumber\\
		=&\sum_{\lambda\in \Lambda_k} \left|\frac{f^{\lambda}}{n!}\eta_{\lambda}\right|^2\leq\frac{k^2}{(n-k+1)^2}.\label{eq:long_cycle-final1.2}
	\end{align}
	Therefore, we have
	\begin{align}
		&\sum_{\lambda\in \Lambda_k} \frac{f^{\lambda}}{n!^2}\sum_{S,T\in\std(\lambda)}\left|\text{The }(S,T)\text{th entry of the matrix }\Big[\widehat{\xi}_n(\rho_{\lambda})\Big]\right|^2\nonumber\\
		=&\sum_{\lambda\in \Lambda_k} \frac{\left(f^{\lambda}\right)^2}{n!^2}\left|\eta_{\lambda}-\sum_{\mu\vdash n:\;M_{\mu}\geq 2}(M_{\mu}-1)|\mathscr{C}_{\mu}|\frac{\chi^{\lambda}(\mu)}{f^{\lambda}}\right|^2,\text{ by \eqref{eq:long_cycle-final1.1}}\nonumber\\
		=&\sum_{\lambda\in \Lambda_k}\left|\frac{f^{\lambda}}{n!}\eta_{\lambda}-\sum_{\mu\vdash n:\;M_{\mu}\geq 2}(M_{\mu}-1)|\mathscr{C}_{\mu}|\frac{\chi^{\lambda}(\mu)}{n!}\right|^2\nonumber\\
		\leq &\sum_{\lambda\in \Lambda_k}\left(\left|\frac{f^{\lambda}}{n!}\eta_{\lambda}\right|+\sum_{\mu\vdash n:\;M_{\mu}\geq 2}\left|(M_{\mu}-1)|\mathscr{C}_{\mu}|\frac{\chi^{\lambda}(\mu)}{n!}\right|\right)^2\nonumber\\
		\leq &\sum_{\lambda\in \Lambda_k}2\left(\left|\frac{f^{\lambda}}{n!}\eta_{\lambda}\right|^2+\sum_{\mu\vdash n:\;M_{\mu}\geq 2}\left|(M_{\mu}-1)|\mathscr{C}_{\mu}|\frac{\chi^{\lambda}(\mu)}{n!}\right|^2\right)\nonumber\\
		\leq& \frac{2k^2}{(n-k+1)^2}+2\left(\frac{1}{c}-1\right)^2\sum_{\lambda\in \Lambda_k}\sum_{\mu\vdash n:\;M_{\mu}\geq 2}|\mathscr{C}_{\mu}|^2\left|\frac{\chi^{\lambda}(\mu)}{n!}\right|^2\label{eq:long_cycle-final1.3}
	\end{align}
	The inequality in \eqref{eq:long_cycle-final1.3} follows from \eqref{eq:long_cycle-final1.2} and $2\leq M_{\mu}\leq\frac{1}{c}$. Moreover, $M_{\mu}\geq 2$ implies that $\frac{|\mathscr{C}_{\mu}|}{n!}\leq\frac{1}{(cn)^2}$ (see the formula in \cite[eq. (1,2)]{Sagan}). Thus,
	\begin{align}
		\sum_{\mu\vdash n:\;M_{\mu}\geq 2}|\mathscr{C}_{\mu}|^2\left|\frac{\chi^{\lambda}(\mu)}{n!}\right|^2\leq& \frac{1}{(cn)^2}\frac{1}{n!}\sum_{\mu\vdash n:\;M_{\mu}\geq 2}|\mathscr{C}_{\mu}|\left|\chi^{\lambda}(\mu)\right|^2\nonumber\\
		\leq&\frac{1}{(cn)^2}\frac{1}{n!}\sum_{\mu\vdash n}|\mathscr{C}_{\mu}|\left|\chi^{\lambda}(\mu)\right|^2\nonumber\\
		=&\frac{1}{(cn)^2}\frac{1}{n!}\sum_{\pi\in \mathscr{C}_{\mu}}\left|\chi^{\lambda}(\pi)\right|^2=\frac{1}{(cn)^2}\langle\chi^{\lambda},\chi^{\lambda}\rangle=\frac{1}{(cn)^2}.\label{eq:long_cycle-final1.4}
	\end{align}
	The last equality in \eqref{eq:long_cycle-final1.4} follows from the orthonormality of irreducible characters. Finally,
	\begin{align}
		&\sum_{\lambda\in \Lambda_k} \frac{f^{\lambda}}{n!^2}\sum_{S,T\in\std(\lambda)}\left|\text{The }(S,T)\text{th entry of the matrix }\Big[\widehat{\xi}_n(\rho_{\lambda})\Big]\right|^2\nonumber\\
		\leq& \frac{2k^2}{(n-k+1)^2}+2\left(\frac{1}{c}-1\right)^2\sum_{\lambda\in \Lambda_k}\sum_{\mu\vdash n:\;M_{\mu}\geq 2}|\mathscr{C}_{\mu}|^2\left|\frac{\chi^{\lambda}(\mu)}{n!}\right|^2,\text{ by \eqref{eq:long_cycle-final1.3}}\nonumber\\
		\leq& \frac{2k^2}{(n-k+1)^2}+2\left(\frac{1}{c}-1\right)^2\sum_{\lambda\in \Lambda_k}\frac{1}{(cn)^2},\text{ by \eqref{eq:long_cycle-final1.4}}\nonumber\\
		\leq& \frac{2k^2}{(n-k+1)^2}+2\left(\frac{1}{c}-1\right)^2\frac{1}{(cn)^2}\sum_{r=1}^{k}r!.\label{eq:long_cycle-final1.5}
	\end{align}
	The inequality in \eqref{eq:long_cycle-final1.5} follows from \eqref{eq:RT-partition_range-decomposition} and $|\Lambda_{k,r}|\leq r!$. Therefore, the theorem follows from the first part of Theorem \ref{thm:compgrp_NS-NStability}, and the fact that the right hand side of \eqref{eq:long_cycle-final1.5} goes to zero as $n\rightarrow\infty$.
\end{proof}
%----------REFERENCES---------
\bibliography{ref}{}
\bibliographystyle{plain}%{alpha}
\end{document}